\documentclass{lms}
\usepackage{tikz-cd}
\usepackage{ragged2e}
\usepackage{mathrsfs}
\usepackage{amsmath}
\usepackage{amssymb}
\usepackage{cite}
\newtheorem{theorem}{Theorem}[section] 
\newtheorem{lemma}[theorem]{Lemma}     
\newtheorem{corollary}[theorem]{Corollary}
\newtheorem{proposition}[theorem]{Proposition}

\newnumbered{assertion}{Assertion}    
\newnumbered{conjecture}{Conjecture}  
\newnumbered{definition}{Definition}
\newnumbered{hypothesis}{Hypothesis}
\newnumbered{remark}{Remark}
\newnumbered{note}{Note}
\newnumbered{observation}{Observation}
\newnumbered{problem}{Problem}
\newnumbered{question}{Question}
\newnumbered{algorithm}{Algorithm}
\newnumbered{example}{Example}
\newunnumbered{notation}{Notation} 

\title[]
 {Deformation families of open Calabi-Yau manifolds and Steinness} 
\dedication{}
\author{Fan Xu}
\classno{32G05 (Primary) 32E10, 14J27 (Secondary)}
\extraline{}

\begin{document}
\maketitle
\begin{abstract}
This paper is to construct deformation families of open Calabi-Yau manifolds together with some discussions on Steinness. Firstly, we construct a nontrivial compactifiable deformation of open Calabi-Yau manifolds from rational elliptic surface with global sections. 
Secondly, we give a condition for the existence of an elliptic fibration with global sections on the complex analytic family of rational elliptic surface with global sections, construct a holomorphic involution on the compactifiable deformation family and claim the non-Steinness for all the compactifiable deformation families constructed. Finally, we give some discussions on the non-Steinness of the quasi-projective variety from $\text{CP}^2 \text{$\#$9} \overline{\text{CP}}^2$ corresponding to a conjecture by Marco Brunella and construct deformation families whose fibers were conjectured to be Stein by Andreas Höring and Thomas Peternel.
\end{abstract}

\section{Introduction} 

\noindent
This is a continuation of the previous paper \cite{xu:2025}. This paper is motivated by the gravitational instantons from rational elliptic surfaces with global sections constructed by Hans-Joachim Hein in \cite{Hein:2012}.

Firstly, we construct a compactifiable deformation family of open Calabi-Yau manifolds in the sense of \cite{Gasparim:2021} by Edoardo Ballico, Elizabeth Gasparim and Francisco Rubilar.

Let $Y:=\{\tau | \tau\in \mathbb{C} \ \& \ Im\tau>0 \}$. For a lattice $\left \langle 1,\tau\right \rangle$ with $\tau\in Y$, $C_0(\tau)=\mathbb{C}/\left \langle 1,\tau\right \rangle$ is an elliptic curve. In addition, fixing $\tau_0 \in Y$, let $C_0=\mathbb{C}/\left \langle 1,\tau_0 \right \rangle$. Moreover, let $f_\tau$ be a holomorphic embedding map from $C_0(\tau)$ to $\text{CP}^2$. Here, $f_\tau$ can be derived from the embedding map constructed in [1, Introduction] through an algebraic isomorphism. Let $f_\tau(C_0(\tau))=\{[x:y:z] \in \text{CP}^2 | f_{\tau,\infty}(x,y,z)=0 \}$. Here, the coefficients of $f_{\tau,\infty}$ are induced by the  Weierstrass elliptic function in \cite[Introduction]{xu:2025} and the algebraic isomorphism.

Now $f_\tau(C_0(\tau))$ has a fixed inflexion point $Q_\tau$ and $f_\tau^{-1}(Q_\tau)$ is the zero point of $C_0(\tau)$ for $\tau\in Y$. One typical example of the embedding map is constructed through the Weierstrass elliptic function which can be found in \cite[Introduction]{xu:2025}.

Through blowing up $\text{CP}^2$ at nine points in the set $\{p_1,p_2,...,p_9\} \subset f_\tau(C_0(\tau))$, we can get a complex surface $S\cong\text{CP}^2 \text{$\#$9} \overline{\text{CP}}^2$. Let $C \subset S$ be the strict transform of $f_\tau(C_0(\tau))$. 

If $\sum_{i=1}^9 f_\tau^{-1} (p_i)=0$ mod $\left \langle 1,\tau\right \rangle$, then $S$ is a rational elliptic surface with global sections \cite[Theorem~1]{Fujimoto:1990}.

Fixing $\tau_0$ and the condition $\sum_{i=1}^9 f_\tau^{-1} (p_i)=0$ mod  $\left \langle 1,\tau\right \rangle$, we can construct a complex analytic family $(\widetilde{\mathcal{F}},T,\widetilde{\pi})$ of rational elliptic surfaces with global sections over a 9-dimensional complex manifold $T=U\times W_1 \times \cdots \times W_8$ through changing the variable $\tau \in U$ and the positions of the eight blow-up points in the sets $W_1,\cdots,W_8$ respectively. 

Here, let $ o=(\tau_0,q_1,q_2,...,q_8) \in  T \subset \mathbb{C}^9$ be the center of $T$. U is a sufficiently small disc neighborhood of $\tau_0$. $W_\upsilon$ is a sufficiently small disc neighborhood of $q_\upsilon$ in the universal cover $\mathbb{C}$ of $C_0 (\tau)$ and $f_{\tau,\upsilon}$ is the natural projection mapping $W_\upsilon$ to $W_\upsilon / \langle 1,\tau  \rangle  $ for $\upsilon \in \{1,\cdots,8\}$ and $\tau \in U$. In addition, let $f_{\tau,9}:\mathbb{C} \rightarrow \mathbb{C}/ \langle 1,\tau \rangle $ be the natural projection for $\tau \in U$.   

Let $S_0 \cong \text{CP}^2 \text{$\#$9} \overline{\text{CP}}^2$ be the blow-up of $\text{CP}^2$ at the nine points on $f_{\tau_0}(C_0)$  corresponding to $o \in \mathbb{C}^9$. Here, $j:S_0 \setminus C_0 \rightarrow \widetilde{\pi}^{-1} (o)$ is defined by $j(x)=(x,\tau_0,q_1,q_2,...,q_8)$ for $x \in S_0 \setminus C_0$. The quasi-projective varieties from the rational elliptic surfaces with global sections have been proved to be complete open Calabi-Yau manifolds in \cite{Hein:2012} by Hans-Joachim Hein.

Then for an appropriate complex manifold $T$, we can get a compactifiable deformation family as follows.

\begin{theorem}
\label{thm1.1}
$(\widetilde{\mathcal{F}},\widetilde{\mathcal{S}},T,\widetilde{\pi},o,j)$
is a nontrivial deformation family of open Calabi-Yau manifolds.
\end{theorem}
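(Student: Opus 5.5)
We have to verify the axioms of a compactifiable deformation family of open Calabi-Yau manifolds in the sense of \cite{Gasparim:2021}, and then prove nontriviality. Here $\widetilde{\mathcal{S}}$ is the divisor in $\widetilde{\mathcal{F}}$ swept out by the strict transforms $C_t$ of the cubics $f_\tau(C_0(\tau))$, $t\in T$. The family of open manifolds is $\widetilde{\mathcal{F}}\setminus\widetilde{\mathcal{S}}\to T$.

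First I construct $\widetilde{\mathcal{F}}$ concretely. Take the trivial family $\text{CP}^2\times T\to T$. Consider the relative cubic $\mathcal{C}=\{f_{\tau,\infty}=0\}\subset \text{CP}^2\times T$; it is smooth over $T$ because the coefficients of $f_{\tau,\infty}$ depend holomorphically on $\tau$. Define nine holomorphic sections $\sigma_\upsilon(t)=f_\tau(f_{\tau,\upsilon}(w_\upsilon))$ for $\upsilon\le 8$, and $\sigma_9(t)=f_\tau\bigl(f_{\tau,9}(-\sum_{\upsilon=1}^8 w_\upsilon)\bigr)$. These sections lie in $\mathcal{C}$. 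Shrinking $U$ and the $W_\upsilon$ so that they are pairwise disjoint, the nine sections are pairwise disjoint. Blowing up $\text{CP}^2\times T$ successively along these nine disjoint codimension-two submanifolds gives a complex manifold $\widetilde{\mathcal{F}}$. The map $\widetilde\pi$ is a proper holomorphic submersion, and its fibre over $t$ is the blow-up $S_t$ of $\text{CP}^2$ at the nine points. Since $\sum f_\tau^{-1}(p_i)=0$, \cite[Theorem~1]{Fujimoto:1990} shows that each $S_t$ is a rational elliptic surface with global sections. The strict transform $\widetilde{\mathcal{S}}$ of $\mathcal{C}$ is a smooth hypersurface, and it meets every fibre transversally in $C_t\cong C_0(\tau)$. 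So $\widetilde{\mathcal{S}}\to T$ is a smooth family of divisors, and the family is compactifiable. By construction, $j$ identifies $S_0\setminus C_0$ with $\widetilde\pi^{-1}(o)\setminus \widetilde{\mathcal{S}}$. Each fibre $S_t\setminus C_t$ is a complete open Calabi-Yau manifold by Hein \cite{Hein:2012}. Holomorphically, this is because $C_t$ is an anticanonical fibre, so $K_{S_t}=\mathcal{O}(-C_t)$ and there is a nowhere-vanishing holomorphic volume form on the complement. I check these remaining conditions of the definition fibrewise.

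For nontriviality, I would use the $j$-invariant of the divisor at infinity. The curve $C_t\cong\mathbb{C}/\langle1,\tau\rangle$ is the unique anticanonical curve in its class. It is therefore intrinsically attached to the compactification and is also the fibre at infinity of the elliptic fibration. If the family were trivial near $o$, any biholomorphism $S_t\setminus C_t\cong S_0\setminus C_0$ compatible with the compactifications would carry $C_t$ to $C_0$. Then $\tau$ and $\tau_0$ would be $SL(2,\mathbb{Z})$-equivalent. Since $j$ is nonconstant on the disc $U$, this fails for generic $\tau$. Alternatively, I would show the Kodaira--Spencer map is nonzero at $o$ in the direction $\partial/\partial\tau$: the deformation restricts to the nontrivial deformation of the elliptic curve $C_0$ inside $H^1(C_0,T_{C_0})$.

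The main obstacle is making the triviality notion of \cite{Gasparim:2021} precise. If triviality refers to the open fibres alone, I must show that the compactification divisor, or at least its modulus, is an invariant of the open manifold $S_t\setminus C_t$. My first attempt would use the asymptotic geometry at infinity: the end of $S_t\setminus C_t$ is a punctured neighbourhood of $C_t$, whose link is a circle bundle over $C_t$. The holomorphic structure of a deleted neighbourhood of the elliptic fibre then determines $\tau$ up to $SL(2,\mathbb{Z})$. This can be done via Hartogs-type extension of biholomorphisms across the divisor, which is possible since $C_t$ has positive-dimensional normal behaviour given by the fibration. If extension is hard, I would fall back on the Kodaira--Spencer criterion for the compactified family restricted to the divisor.
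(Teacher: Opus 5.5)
Your construction of $\widetilde{\mathcal{F}}$ and your checks of compactifiability, global sections (Fujimoto) and completeness (Hein) follow the paper's Theorem~\ref{thm2.4} and Theorem~\ref{thm2.6}. Your nontriviality idea, comparing $j$-invariants of the boundary cubics, is the one behind Corollary~\ref{cor2.7}. A small point: to get nine disjoint sections you need the nine blow-up points over $o$ to be pairwise distinct in $C_0$, including the ninth point fixed by the sum condition. Disjointness of the discs $W_\upsilon\subset\mathbb{C}$ does not give this.

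\textbf{The gap.} The step that carries all the weight is: an isomorphism $S_{t_1}\setminus C_{t_1}\cong S_{t_2}\setminus C_{t_2}$ forces $C_{t_1}\cong C_{t_2}$. You name this as the main obstacle but leave it unproved, and none of your three tools proves it.
\begin{itemize}
\item $C_t$ is not the unique anticanonical curve. Here $N_{C_t/S_t}$ is trivial, so $h^0(-K_{S_t})=2$, and $|-K_{S_t}|$ is the elliptic pencil itself.
\item Hartogs-type extension gives nothing across a divisor. Biholomorphisms of $S_t\setminus C_t$ genuinely need not extend to $S_t$: fibrewise translation by a transcendental holomorphic section of $S_t\setminus C_t\to\mathbb{C}$ is an example.
\item A nonzero Kodaira--Spencer class only shows that the family of pairs $(S_t,C_t)$ is nontrivial. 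It says nothing about isomorphisms of the open fibres.
\end{itemize}

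\textbf{How to close it.} The missing argument is short for the notion of triviality the paper actually uses, namely algebraic isomorphisms of the open fibres (Definition~\ref{def1}).
\begin{itemize}
\item An algebraic isomorphism $\phi$ is a birational map $S_{t_1}\to S_{t_2}$ that is regular off finitely many points of $C_{t_1}$.
\item Resolve it by point blow-ups $\sigma\colon Z\to S_{t_1}$ over $C_{t_1}$. This gives a birational morphism $\Psi\colon Z\to S_{t_2}$.
\item $\Psi$ maps the dense open set $Z\setminus\sigma^{-1}(C_{t_1})$ isomorphically onto $S_{t_2}\setminus C_{t_2}$. Since a section of a separated morphism has closed image, $\Psi^{-1}(C_{t_2})=\sigma^{-1}(C_{t_1})$.
\item The components of $\sigma^{-1}(C_{t_1})$ are the strict transform of $C_{t_1}$ and rational $\sigma$-exceptional curves.
\item Exactly one component is not contracted by $\Psi$, namely the $\Psi$-strict transform of $C_{t_2}$, which has genus one. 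So it must be the strict transform of $C_{t_1}$, and $C_{t_1}\cong C_{t_2}$.
\item Hence $j(\tau_1)=j(\tau_2)$. Since $j$ is nonconstant on $U$, you get two non-isomorphic fibres.
\end{itemize}
If you want to handle biholomorphisms instead, use the elliptic fibration $S_t\setminus C_t\to\mathbb{C}$, which is the Remmert reduction. A biholomorphism then covers an affine map of $\mathbb{C}$ and matches fibres. The $j$-invariant of the fibre over $z$ tends to $j(C_t)$ as $z\to\infty$, which again gives $j(C_{t_1})=j(C_{t_2})$.

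The paper's Corollary~\ref{cor2.7} passes from non-isomorphic boundary curves to non-isomorphic open fibres in one line. So this argument is exactly the piece your write-up has to supply.
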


Secondly, we give a condition for the existence of an elliptic fibration with global sections on the complex analytic family $(\widetilde{\mathcal{F}},T,\widetilde{\pi})$, construct deformation families admitting a holomorphic involution and prove that $\widetilde{\mathcal{F}}\setminus \widetilde{\mathcal{S}}$ is not Stein.

Under some special conditions, we can derive a cubic curve in $\text{CP}^2$ intersecting $f_\tau (C_0 (\tau))$ at the points in the set $\{p_1,\cdots,p_9\}$. The coefficients of the cubic curve are $Q_{\tau,1},\cdots,Q_{\tau,9}$. For $p \in (M^2\cap M^3)\cup(M^4\setminus M^3) \subset \mathbb{C}^9$, we get a function $P_\tau:(M^2\cap M^3)\cup(M^4\setminus M^3) \rightarrow \text{CP}^8$ defined by $P_\tau (p)=[Q_{\tau,1}: \cdots :Q_{\tau,9}] \in \text{CP}^8$. Let $W=\{(p_1,\cdots,p_9 )|(p_1,\cdots,p_8 )=W_1 \times \cdots \times W_8 \quad \& \quad \sum_{i=1}^9 f_{\tau,i} (p_i)=0\}$.

Then there is a corollary as follows.

\begin{corollary}
\label{cor1.2}
If T is chosen properly such that $P_\tau$ is a holomorphic function on $W \subset (M^2\cap M^3)\cup(M^4\setminus M^3)$, then there exists an elliptic fibration with global sections on the complex analytic family $(\widetilde{\mathcal{F}},T,\widetilde{\pi})$.
\end{corollary}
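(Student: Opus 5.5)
The plan is to build the fibration fibrewise from the classical pencil of cubics, and then use the holomorphy of $P_\tau$ to make it depend holomorphically on the parameter $t\in T$. For each $t=(\tau,p_1,\dots,p_8)\in T$ let $p_9$ be determined by $\sum_{i=1}^9 f_{\tau,i}(p_i)=0$. By the sum condition and \cite[Theorem~1]{Fujimoto:1990}, the nine image points $f_\tau(f_{\tau,i}(p_i))$ are the base points of a pencil of cubics. This pencil is spanned by $f_{\tau,\infty}$ and a second cubic $g_t$ through the nine points. In the region $(M^2\cap M^3)\cup(M^4\setminus M^3)$ that second cubic is the one whose coefficient vector is $P_\tau(p)=[Q_{\tau,1}:\cdots:Q_{\tau,9}]$. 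I am reading the nine coefficients as those of $g_t$ modulo the span of $f_{\tau,\infty}$, so they lie in $\text{CP}^8$. The fibre $\widetilde{\pi}^{-1}(t)$ then carries the fibration $[x:y:z]\mapsto[f_{\tau,\infty}(x,y,z):g_t(x,y,z)]\in\text{CP}^1$, lifted to the blow-up.

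First, I choose $T$ so that $P_\tau$ is holomorphic on $W$. I then lift locally to a nonvanishing holomorphic vector $(Q_{\tau,1},\dots,Q_{\tau,9})$ depending on $t$, shrinking $U,W_1,\dots,W_8$ if necessary. This gives a holomorphic family of cubic polynomials $g_t$. The coefficients of $f_{\tau,\infty}$ are holomorphic in $\tau$, being built from Weierstrass data as in \cite[Introduction]{xu:2025}.

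Second, I define
\[
\Phi:\mathbb{P}^2\times T\dashrightarrow \text{CP}^1\times T,\qquad \Phi([x:y:z],t)=\big([f_{\tau,\infty}:g_t],t\big).
\]
Its indeterminacy locus is the union of the nine holomorphic sections $t\mapsto p_i(t)$, which is exactly the centre of the family blow-up defining $\widetilde{\mathcal{F}}$. At a simple base point the pencil has independent differentials, so the map extends holomorphically across the exceptional divisors. This gives a holomorphic $\widetilde{\Phi}:\widetilde{\mathcal{F}}\to\text{CP}^1\times T$ commuting with the projections to $T$. On each fibre it restricts to the elliptic fibration of the rational elliptic surface. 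Third, each exceptional divisor $E_i\subset\widetilde{\mathcal{F}}$ is a $\text{CP}^1$-bundle over $T$, and $\widetilde{\Phi}|_{E_i}$ is a fibrewise isomorphism onto $\text{CP}^1\times T$. So $E_i$ gives a global holomorphic section, and $E_9$ serves as the zero section.

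The main obstacle is the extension step when base points collide, i.e.\ when some $p_i(t)$ coincide for $t$ in $W$. There the blow-ups are iterated and the extension must be checked with the infinitely near points, which is presumably what the strata $M^2,M^3,M^4$ encode. I would first try to choose $T$ small enough that the nine points stay distinct, as they are at $o$ if one chooses $q_i$ generic. In that case the simple-base-point argument suffices. Otherwise I would argue that holomorphy of $P_\tau$ on these strata exactly ensures that $g_t$ has the required tangency, so the local computation goes through uniformly in $t$.
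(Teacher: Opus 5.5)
Your proposal is correct and follows the same route as the paper. You use the pencil spanned by $f_{\tau,\infty}$ and the cubic with coefficients $P_\tau(p)$ on each fibre, get holomorphic dependence on the parameter from the holomorphy of $P_\tau$ and of the coefficients of $f_{\tau,\infty}$, and take the exceptional divisors over the $M_i$ as the global holomorphic sections. Your version is somewhat more careful than the paper's in two respects. Resolving the single rational map $([f_{\tau,\infty}:g_t],t)$ on $\text{CP}^2\times T$ by the family blow-up gives holomorphy on the whole of $\widetilde{\mathcal{F}}$ directly, whereas the paper infers it from fibrewise holomorphy. Restricting to distinct base points also matches the setting in which $\widetilde{\mathcal{F}}$ was built in Theorem~\ref{thm2.4}, a point the paper's proof never addresses.
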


In addition, we have a corollary as follows.

\begin{corollary}
\label{cor1.3}
For the deformation family constructed in Corollary~\ref{cor1.2}, if all the singular fibers are of Kodaira type $I_1$ and $C_{\tau,p}$ is a smooth elliptic curve for $p \in W$ and $\tau \in U$, then there exists a holomorphic involution on the compactifiable deformation family $(\widetilde{\mathcal{F}},\widetilde{\mathcal{S}},T,\widetilde{\pi},o,j)$.
\end{corollary}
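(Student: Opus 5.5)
The plan is to build the involution fiberwise from the elliptic fibration of Corollary~\ref{cor1.2}: inversion $x\mapsto -x$ in the group law of each smooth fiber, with the origin given by the global section. The global section comes from the zero-section class, the exceptional curve over the fixed inflexion point $Q_\tau$ (equivalently $f_\tau^{-1}(Q_\tau)=0$). By Corollary~\ref{cor1.2}, for $t=(\tau,p)\in T$ each fiber $\widetilde{\pi}^{-1}(t)$ is a rational elliptic surface $S_t$. It carries a holomorphically varying elliptic fibration $\phi_t:S_t\to \text{CP}^1$, given by the pencil spanned by $f_{\tau,\infty}$ and the cubic with coefficients $P_\tau(p)=[Q_{\tau,1}:\cdots:Q_{\tau,9}]$. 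It also carries a holomorphically varying zero section $\sigma_t$. Using the holomorphy of $P_\tau$ on $W$, I will assemble these into a relative fibration $\Phi:\widetilde{\mathcal{F}}\to T\times\text{CP}^1$ with a section $\Sigma$.

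On the smooth locus of $\Phi$, define $\iota$ on each smooth fiber $E$ by $x\mapsto -x$ with respect to $\Sigma\cap E$. This is holomorphic in all variables, because inversion on a smooth family of elliptic curves with a section is a morphism: locally it is $(x,y)\mapsto(x,-y)$ in relative Weierstrass coordinates, whose coefficients depend holomorphically on $(t,\lambda)$. The anticanonical fiber $C_{\tau,p}$, i.e.\ the strict transform of $f_\tau(C_0(\tau))$, is smooth by hypothesis. It is preserved, and $\iota$ acts on it as $z\mapsto -z$ on $\mathbb{C}/\langle 1,\tau\rangle$. Hence $\iota$ preserves $\widetilde{\mathcal{S}}$ and descends to $\widetilde{\mathcal{F}}\setminus\widetilde{\mathcal{S}}$. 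It also commutes with $\widetilde{\pi}$ and fixes $\Sigma$, so it is compatible with $o$ and $j$ after conjugating $j$ by the involution on $S_0$.

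The main obstacle is extending $\iota$ across the singular fibers. Here I use the hypothesis that all singular fibers are of type $I_1$, i.e.\ nodal rational curves. Since such a fiber is irreducible, the surface is relatively minimal and the relative Weierstrass model $y^2=4x^3-g_2(t,\lambda)x-g_3(t,\lambda)$ is isomorphic to $\widetilde{\mathcal{F}}$ near these fibers, with no contraction needed. On that model the map $(x,y)\mapsto(x,-y)$ is a global holomorphic automorphism, and it fixes the node $y=0$. Alternatively, $\iota$ is holomorphic off a codimension-two analytic subset (the nodes) and bounded, so Hartogs/Riemann extension gives a holomorphic extension. It is an involution because $\iota^2=\mathrm{id}$ on a dense open set.

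Finally I will check three points. First, the number of $I_1$ fibers stays $12$ over $T$, so the discriminant locus varies nicely, via Euler characteristic $12$. Second, $\iota$ is biholomorphic. Third, it is nontrivial, since it acts as $-1$ on the smooth fibers.
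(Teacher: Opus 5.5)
Your proposal follows essentially the paper's route: fiberwise inversion on the elliptic fibration of Corollary~\ref{cor1.2}, extended across the $I_1$ fibers. Your relative Weierstrass model with $(x,y)\mapsto(x,-y)$ is a more concrete version of the paper's appeal to the involution on the universal stable curve $\overline{\mathcal{E}}\to\overline{\mathcal{M}}_{1,1}$.

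There are two harmless slips. First, $Q_\tau$ is generally not among the nine blown-up points (on $M^3$ no $p_i^*$ is $0$), so there is no exceptional curve over it. Take instead any exceptional divisor $E_i$ as the zero section; then $\iota$ acts on $C$ by $z\mapsto 2p_i^*-z$ rather than $z\mapsto -z$. Second, $C_{\tau,p}$ is the second generator $g_{\tau,p}=0$ of the pencil, not the strict transform of $f_\tau(C_0(\tau))$. That strict transform is smooth automatically, and it is preserved, so that $\iota$ descends to $\widetilde{\mathcal{F}}\setminus\widetilde{\mathcal{S}}$, simply because $\iota$ preserves every fiber of $\Phi$.
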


Here, the involution map is induced by an involution on the lattice $C_0 (\tau)$ which is connected to mirror symmetry through the Borcea-Voisin Construction proposed in \cite{Borcea:1996}. Moreover, this involution map also connects with the hypersurface involution on the rational elliptic surface with global sections. 

Moreover, we present an example satisfying the condition of Corollary~\ref{cor1.2} derived from \cite[Example~3.3]{Fujimoto:1990}. Then we can construct a holomorphic involution on the compactifiable deformation family if the conditions proposed in Corollary~\ref{cor1.3} are all satisfied. In particular, if the center point $ o =(\tau_0,q_1,q_2,\cdots,q_8)$ of T satisfies the condition that $q_i=-q_{10-i}$ for $i \in \{2,\dots, 5\}$ with $\sum_{i=1}^9 f_{\tau,i}(q_i)=0$, then the elliptic fibration on $\widetilde{\mathcal{F}}$ with global holomorphic sections can not be derived directly.

In addition, it is easy to prove that $S\setminus C$ is not Stein in our case. Then the fibers of $(\widetilde{\mathcal{F}},\widetilde{\mathcal{S}},T,\widetilde{\pi},o,j)$ are all not Stein. So the compactifiable deformation families of open Calabi-Yau manifolds constructed above are all not Stein.

Finally, we prove the non-Steinness of the quasi-projective varieties from $\text{CP}^2 \text{$\#$9} \overline{\text{CP}}^2$ in almost every case corresponding to a conjecture proposed in \cite{Marco:2010} by Marco Brunella, discuss the remaining work and give deformation families in the sense of \cite{Gasparim:2021} whose fibers were conjectured to be Stein in \cite{Höring:2024}. Here, the nine blow-up points on $\text{CP}^2$ corresponding to the complex surface $S \cong \text{CP}^2 \text{$\#$9} \overline{\text{CP}}^2$ can be infinitely near and $f_\tau$ is an arbitrary holomorphic embedding map.

Classifying the value of $\sum_{i=1}^9 f_\tau^{-1}(p_i)$ corresponding to the positions of the nine blow-up points, the quasi-projective variety $S\setminus C$ can be proved to be not Stein if $\sum_{i=1}^9 f_\tau^{-1}(p_i)=m_1+m_2 \tau$ mod $\left \langle 1,\tau\right \rangle$ with $m_1+m_2 \tau=0$, of order $m$ or $(m_1,m_2)$ being a Diophantine number pair \cite[Definition~2.2]{xu:2025}.

If $\sum_{i=1}^9 f_\tau^{-1}(p_i)=0$ mod $\left \langle 1,\tau\right \rangle$, taking use of the elliptic fibration, we can prove the non-Steinness of $S\setminus C$ easily.

In addition, if $\sum_{i=1}^9 f_\tau^{-1}(p_i)$ is of order $m \geq 2$, taking use of logarithmic transformation and the result above, we can prove the non-Steinness of $S\setminus C$.

Moreover, if $\sum_{i=1}^9 f_\tau^{-1}(p_i)=m_1+m_2 \tau$ mod $\left \langle 1,\tau\right \rangle$ with $(m_1,m_2)$ being a Diophantine number pair, taking use of \cite[Theorem~1.7]{Koike and Uehara:2010}, $S\setminus C$ is not Stein.

Then we have a proposition as follows.

\begin{proposition}
\label{pro1.4}
For any holomorphic embedding $f_\tau$, the quasi-projective variety $S\setminus C$ is not Stein when $\sum_{i=1}^9 f_\tau^{-1}(p_i)=0$ mod $\left \langle 1,\tau\right \rangle$, of order $m \geq 2$ or $m_1+m_2 \tau$ mod $\left \langle 1,\tau\right \rangle$ with $(m_1,m_2)$ being a Diophantine number pair.
\end{proposition}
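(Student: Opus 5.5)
The plan is to treat the three cases of the statement separately. In each case the goal is to exhibit a positive-dimensional compact analytic subvariety of $S\setminus C$, or a holomorphic obstruction to the existence of enough holomorphic functions. Either one rules out Steinness, because a Stein manifold contains no compact curves. The common starting point is that $C$ is an anticanonical curve: $C\in|-K_S|$, $C^2=0$, and the normal bundle $N_{C/S}\cong\mathcal{O}_S(C)|_C$ corresponds to the point $\sum_{i=1}^9 f_\tau^{-1}(p_i)\in \mathrm{Pic}^0(C)\cong C_0(\tau)$. This identification holds for any holomorphic embedding $f_\tau$, since changing the embedding only changes the origin by an inflexion point, which is absorbed in the normalization. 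The three cases correspond to this point being trivial, torsion of order $m$, or non-torsion satisfying a Diophantine condition.

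\emph{Case $\sum f_\tau^{-1}(p_i)=0$.}
Here $N_{C/S}$ is trivial and $h^0(S,\mathcal{O}_S(C))=2$, using Riemann--Roch and $H^1(S,\mathcal{O}_S)=0$ on a rational surface. So $|C|$ is a base-point-free pencil, giving an elliptic fibration $\phi:S\to\text{CP}^1$ with $C$ as a fiber (Fujimoto's theorem, already invoked). Any other fiber $F=\phi^{-1}(t)$ with $t\neq\phi(C)$ is a compact curve disjoint from $C$, hence contained in $S\setminus C$. Therefore $S\setminus C$ is not Stein.

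\emph{Case of order $m\geq 2$.}
Now $mC$ moves in a pencil. I would show $h^0(S,\mathcal{O}_S(mC))=2$ by induction on $k$ using the sequences
\[
0\to\mathcal{O}_S((k-1)C)\to\mathcal{O}_S(kC)\to N_{C/S}^{\otimes k}\to 0 .
\]
The last term has no sections for $k<m$ and is trivial for $k=m$. The vanishing of $H^1(S,\mathcal{O}_S((k-1)C))$ for $k\le m$ also comes from these sequences together with $H^1(S,\mathcal{O}_S)=0$. Thus $|mC|$ is a pencil containing $mC$, hence base-point free, and it defines an elliptic fibration with $C$ as a multiple fiber of multiplicity $m$. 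This is the Halphen surface picture; equivalently, it is obtained from the previous case by a logarithmic transformation. The other fibers are again compact curves in $S\setminus C$, so $S\setminus C$ is not Stein. The main technical step is the cohomology vanishing just described, which I expect to be routine.

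\emph{Diophantine case.}
This is the main obstacle. Here $N_{C/S}$ is non-torsion, so there are no compact curves disjoint from $C$ in the fibration sense, and a different argument is needed. I would invoke \cite[Theorem~1.7]{Koike and Uehara:2010}. Under the Diophantine condition on $(m_1,m_2)$, which is exactly the condition in Arnold/Ueda-type linearization, that theorem gives a neighborhood $V$ of $C$ holomorphically equivalent to a neighborhood of the zero section in $N_{C/S}$. In particular it gives a pseudoflat, Levi-flat foliation of $V\setminus C$ by compact real hypersurfaces, the level sets of $-\log|w|$ with $w$ a fiber coordinate.

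From this structure I would conclude non-Steinness as follows. Suppose $S\setminus C$ were Stein. Take any holomorphic function $g$ on $S\setminus C$ and restrict it to a compact Levi-flat hypersurface $H\subset V\setminus C$. The hypersurface $H$ is foliated by dense leaves, which are images of $\mathbb{C}$ with dense closure because the normal bundle is non-torsion. By the maximum principle, $|g|$ attains its maximum on $H$ along a whole leaf closure, so $g$ is constant on $H$. Then $g$ is constant on $S\setminus C$ by the identity theorem. This contradicts the fact that a Stein manifold of positive dimension has non-constant holomorphic functions. Hence $S\setminus C$ is not Stein, which completes all three cases of Proposition~\ref{pro1.4}.
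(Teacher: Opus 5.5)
Your proposal is correct. In the case $\sum f_\tau^{-1}(p_i)=0$ it coincides with the paper: another fiber of the elliptic fibration is a compact curve in $S\setminus C$.

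In the Diophantine case you use the same input, \cite[Theorem~1.7]{Koike and Uehara:2010}. However, you carry out the step the paper only asserts (``the boundary is a Levi-flat 3-torus with dense leaves, so $S\setminus C$ is not Stein''). You apply the maximum principle on the leaf through a maximum point of $|g|$ on a compact Levi-flat hypersurface, then use density of that leaf and the identity theorem. This shows every holomorphic function on $S\setminus C$ is constant, which is stronger than non-Steinness. Density is not even needed: restricting a strictly plurisubharmonic exhaustion function to the leaf through its maximum on such a hypersurface already contradicts the maximum principle.

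The genuinely different part is the torsion case. The paper takes the fibration $\Phi_{|-mK_S|}$ from Fujimoto and performs Kodaira's logarithmic transformation to obtain a rational elliptic surface $\widetilde S$ with global sections. It then reduces to the first case through $\widetilde S\setminus(\varphi^m)^{-1}([1:0])\cong S\setminus\Phi_{|-mK_S|}^{-1}([1:0])$. You instead compute $h^0(\mathcal{O}_S(mC))=2$ from the restriction sequences, and use the other members of the Halphen pencil $|mC|$ directly as compact curves in $S\setminus C$. Your route is self-contained and shows the logarithmic-transformation detour is unnecessary for non-Steinness. What the paper's route buys is the explicit link with a rational elliptic surface with sections, which it reuses in Proposition~\ref{pro4.4}.

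Two minor remarks. First, in the case $\sum f_\tau^{-1}(p_i)=0$, $h^0(\mathcal{O}_S(C))=2$ comes from the restriction sequence $0\to\mathcal{O}_S\to\mathcal{O}_S(C)\to N_{C/S}\to 0$ together with $H^1(S,\mathcal{O}_S)=0$. Riemann--Roch alone only gives $\chi(\mathcal{O}_S(C))=1$.

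Second, identifying $N_{C/S}$ with the point $\sum f_\tau^{-1}(p_i)$ requires $f_\tau(0)$ to be an inflexion point, as normalized in Lemma~\ref{lem2.2}. Otherwise the class is shifted by $9a$, where $f_\tau^*\mathcal{O}(1)\cong\mathcal{O}(3a)$. So your phrase about the origin being ``absorbed in the normalization'' is only valid under that convention, which is also the one the paper tacitly uses for ``any holomorphic embedding''.
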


In addition, we can construct a smooth Hermitian metric whose curvature form is non-negative on the anticanonical line bundle of S when $\sum_{i=1}^9 f_\tau^{-1}(p_i)=0$, of order $m \geq 2$ or $m_1+m_2 \tau$ mod $\left \langle 1,\tau\right \rangle$ with $(m_1,m_2)$ being a Diophantine number pair.

Now for the non-Steinness conjecture of $S\setminus C$ in \cite{Marco:2010} by Marco Brunella, the remaining quasi-projective varieties we have to deal with are all in a kind of compactifiable deformation family $(\widetilde{\mathcal{F}}_1,\widetilde{\mathcal{S}}_3,V_i,\pi_{\widetilde{\mathcal{F}}_1},x_{origin},j_{origin})$ of $S\setminus C$ over a one-dimensional complex manifold constructed from the complex analytic family $(\widetilde{\mathcal{F}}_1,V_i,\pi_{\widetilde{\mathcal{F}}_1})$ for $i \in \{1,...,9\}$. The fibers for $(\widetilde{\mathcal{F}}_1,\widetilde{\mathcal{S}}_3,V_i,\pi_{\widetilde{\mathcal{F}}_1},x_{origin},j_{origin})$ are almost not Stein and the anticanonical line bundles for almost all the fibers of $(\widetilde{\mathcal{F}}_1,V_i,\pi_{\widetilde{\mathcal{F}}_1})$ are semipositive. A natural question is whether the anticanonical line bundle is semipositive for every fiber of the complex analytic family $(\widetilde{\mathcal{F}}_1,V_i,\pi_{\widetilde{\mathcal{F}}_1})$. The question corresponds to a conjecture proposed by Takayuki Koike \cite[Conjecture~2.1]{Takayuki:2023}.

At the end of this paper, we construct a deformation family of the canonical extension of a compact complex manifold.

Let $(\mathcal{M},\pi_\mathcal{M},B)$ be a nontrivial complex analytic family with B being a sufficiently small polydisc centered at 0. Here, $\pi_\mathcal{M}^{-1}(0)$ is holomorphically isomorphic to a compact Kähler manifold M.

\begin{proposition}
\label{pro1.5}
If there exists a polydisc $B_1 \Subset B$ such that $\mathcal{M}_{B_1}=\pi_\mathcal{M}^{-1}(B_1)$ is Kähler, then we can construct a deformation family of the canonical extension $W_M$ of the compact complex manifold M in the sense of \cite{Gasparim:2021} with the base manifold being $B_1$.
\end{proposition}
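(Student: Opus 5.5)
The plan is to realize the canonical extension $W_M$ as the total space of an affine bundle over $M$, and then run the same construction fiberwise over the Kähler family $\mathcal{M}_{B_1}$. Recall that for a compact Kähler manifold $M$ with Kähler class $[\omega]\in H^1(M,\Omega^1_M)$, the extension class defines a nonsplit extension
\begin{equation*}
0\rightarrow \mathcal{O}_M \rightarrow E_M \rightarrow T_M \rightarrow 0 .
\end{equation*}
The canonical extension $W_M$ is the affine subbundle of $E_M$ lying over $1\in\mathcal{O}_M$ (equivalently, of the dual extension). It is an affine bundle modelled on $T_M$ or $T^*_M$, so it is a noncompact complex manifold. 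It is compactified by $\mathbb{P}(E_M)$ (or $\mathbb{P}(E_M^*)$), with the divisor at infinity $\mathbb{P}(T_M)$.

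The first step is to relativize. Since $\mathcal{M}_{B_1}$ is Kähler, fix a Kähler form $\Omega$ on $\mathcal{M}_{B_1}$. Its class in $H^1(\mathcal{M}_{B_1},\Omega^1_{\mathcal{M}_{B_1}})$ restricts to a class in $H^1(\mathcal{M}_{B_1},\Omega^1_{\mathcal{M}_{B_1}/B_1})$, using the relative cotangent sheaf, which is locally free because $\pi_\mathcal{M}$ is a submersion. That restricted class defines an extension
\begin{equation*}
0\rightarrow \mathcal{O}\rightarrow \mathcal{E}\rightarrow T_{\mathcal{M}_{B_1}/B_1}\rightarrow 0 .
\end{equation*}
Let $\mathcal{W}$ be the corresponding affine subbundle. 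On each fiber $M_t=\pi_\mathcal{M}^{-1}(t)$, the extension restricts to the one given by the Kähler class $[\Omega|_{M_t}]$, so $\mathcal{W}|_{M_t}=W_{M_t}$. At $t=0$ we should take $\Omega$ such that $[\Omega|_{M}]=[\omega]$. Alternatively, we can use the fact that $W_M$ depends up to isomorphism only on the ray of the class, and that any Kähler class on $M$ extends over the small base after shrinking $B_1$. This gives a holomorphic identification $j:W_M\rightarrow \mathcal{W}|_{\pi^{-1}(0)}$.

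The second step is to produce the compactification required by the definition in \cite{Gasparim:2021}. Set $\mathcal{X}=\mathbb{P}(\mathcal{E})$ and $\mathcal{S}=\mathbb{P}(T_{\mathcal{M}_{B_1}/B_1})$. Then $\mathcal{X}$ is a complex manifold with a proper submersion to $B_1$, obtained by composing the $\mathbb{P}$-bundle projection with $\pi_\mathcal{M}$. The set $\mathcal{S}$ is a smooth hypersurface, flat over $B_1$, with $\mathcal{X}\setminus\mathcal{S}=\mathcal{W}$. So $(\mathcal{X},\mathcal{S},B_1,\pi,0,j)$ has the same shape as the families in Theorem~\ref{thm1.1}. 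It remains to check the remaining axioms, such as the family being nontrivial and the fibers being the given open manifolds. Nontriviality should come from the nontriviality of $(\mathcal{M},\pi_\mathcal{M},B)$, restricted to $B_1$. Indeed, $W_{M_t}$ retracts onto $M_t$, and $M_t$ is recovered as the base of the affine bundle structure, because the fibers $\mathbb{C}^n$ are the images of maximal holomorphic maps from $\mathbb{C}^n$.

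I expect the main obstacle to be this last nontriviality argument, together with making sure the definition in \cite{Gasparim:2021} is satisfied verbatim. First I would try to show that a biholomorphism $W_{M_t}\cong W_{M_0}$ descends to a biholomorphism $M_t\cong M_0$. Since $M$ is compact, the affine fibers are the only compact-free directions, and any holomorphic map from an affine fiber $\mathbb{C}^n$ to the compact Kähler base composed with the projection is not forced to be constant in general. So I would instead work with the pair $(\mathcal{X}_t,\mathcal{S}_t)$ and recover $M_t$ from $\mathcal{S}_t=\mathbb{P}(T_{M_t})$. If that fails, I would fall back on the weaker version of nontriviality in \cite{Gasparim:2021}, namely nontriviality of the compactified family, which follows directly from the nontriviality of $\mathcal{M}_{B_1}$.
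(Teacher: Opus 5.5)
Your construction is the paper's. The K\"ahler form on $\mathcal{M}_{B_1}$ gives a class in $H^1(\mathcal{M}_{B_1},\Omega^1_{\mathcal{M}_{B_1}/B_1})$, hence a relative extension. You then remove the projectivized relative (co)tangent bundle from its projectivization. The fibers are the canonical extensions because the class restricts to $[\Omega|_{M_t}]$ on each $M_t$.

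You work with the dual extension $0\to\mathcal{O}\to\mathcal{E}\to T_{\mathcal{M}_{B_1}/B_1}\to 0$. For $\mathbb{P}(T_{\mathcal{M}_{B_1}/B_1})\subset\mathbb{P}(\mathcal{E})$ and for ``the affine subbundle over $1$'' to make sense, $\mathbb{P}$ must mean rank-one quotients, i.e.\ you are really working in $\mathcal{E}^*$. That is exactly the paper's $W_{B_1}$, with $T^*(\mathcal{M}_{B_1}/B_1)$ as subbundle.

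Where you differ is in verifying the deformation property. The paper deduces $C^\infty$ local triviality of $T^*(\mathcal{M}_{B_1}/B_1)$, then of $W_{B_1}$, then of $P(W_{B_1})\setminus P(T^*(\mathcal{M}_{B_1}/B_1))$, from Kodaira's differentiable triviality of $\pi_{B_1}$. You instead exhibit the proper submersion $\mathbb{P}(\mathcal{E})\to B_1$ together with the smooth divisor $\mathcal{S}$, which also submerses onto $B_1$. Your packaging buys more for less:
\begin{itemize}
\item $\mathcal{W}\to B_1$ is a submersion, hence flat;
\item the family is compactifiable in the same shape as Theorem~\ref{thm1.1};
\item the relative Ehresmann theorem for the pair $(\mathcal{X},\mathcal{S})$ gives the differentiable triviality of $\mathcal{W}$ that the paper argues for by hand.
\end{itemize}

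There are two corrections.

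\textbf{The ``alternative'' at $t=0$ is false.} A K\"ahler class on $M$ need not extend over any neighbourhood of $0$. Since $B_1$ is contractible, $[\Omega|_{M_t}]$ is the parallel transport of $[\Omega|_M]$. So only classes that stay of type $(1,1)$ on every $M_t$ can occur, and a given class typically does not. For K3 surfaces or $2$-dimensional tori, a fixed real class remains of type $(1,1)$ only over a hypersurface of the local deformation space, and shrinking $B_1$ does not help. The identification $j$ therefore requires choosing $\omega$, up to scaling, to be $\Omega|_M$ for some K\"ahler form $\Omega$ on $\mathcal{M}_{B_1}$. The paper makes the same choice, simply asserting that $\omega_{B_1}|_{\pi_{B_1}^{-1}(0)}$ is the pullback of $\omega$. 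You should state this as part of the construction, not as a property of arbitrary K\"ahler classes.

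\textbf{Nontriviality is not part of the statement.} The paper proves none, so your last paragraph, which you flag as the main obstacle, can be dropped. Its heuristic is also wrong. For $E_\tau=\mathbb{C}/\langle 1,\tau\rangle$, the canonical extension is $\mathbb{C}^2$ modulo the lattice $\{(\lambda,c\bar\lambda):\lambda\in\langle 1,\tau\rangle\}$ for a constant $c\neq 0$. This lattice spans a totally real plane, so $W_{E_\tau}\cong(\mathbb{C}^*)^2$ for every $\tau$. In the paper's torus example, all open fibers are therefore biholomorphic even though the curves are not, and $M_t$ cannot be recovered from $W_{M_t}$. Only the pair $(\mathcal{X}_t,\mathcal{S}_t)$, with $\mathcal{S}_t\cong M_t$, remembers it. Your claim that nontriviality of $\mathbb{P}(\mathcal{E})\to B_1$ ``follows directly'' from that of $\mathcal{M}_{B_1}$ is likewise unjustified, since projective-bundle structures need not be unique. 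This is moot for the proposition.
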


The condition above for the complex analytic family $(\mathcal{M},\pi_\mathcal{M},B)$ can be replaced by the other condition proposed by Jian Chen in \cite[Theorem~1.2]{Chen:2026}.

In addition, it was conjectured in \cite[Conjecture~1.1]{Höring:2024} that the fibers for the deformation family of the canonical extension $W_M$ of M constructed above are all Stein if and only if the tangent bundle for each fiber of the complex analytic family $(\mathcal{M}_{B_1},\pi_{B_1},B_1)$ is numerically effective. Some compact complex manifolds with numerically effective tangent bundles were studied in \cite{Schneider:1994} by Jean-Pierre Demailly, Thomas Peternell and Michael Schneider.

Here, one typical example is the deformation family of the canonical extension of a torus whose fibers are Stein.

The following is to introduce the main contents in the following sections.

In Section 2, we construct a nontrivial compactifiable deformation family of open Calabi-Yau manifolds and prove Theorem~\ref{thm1.1}.

Firstly, we present a complex analytic family $(\widetilde{\mathcal{F}},T,\widetilde{\pi})$ of rational elliptic surfaces with global sections and a complex analytic family of elliptic curves, $(\widetilde{\mathcal{S}},T,\widetilde{\pi}|_{\widetilde{\mathcal{S}}})$.

Secondly, through suitable choice of T, we can get a nontrivial deformation family of open Calabi-Yau manifolds, $(\widetilde{\mathcal{F}},\widetilde{\mathcal{S}},T,\widetilde{\pi},o,j)$.

Then Theorem~\ref{thm1.1} can be proved through combining Theorem~\ref{thm2.6} and Corollary~\ref{cor2.7}.

In Section 3, we give a condition for the existence of an elliptic fibration with global sections  on the complex analytic family to prove Corollary~\ref{cor1.2}, construct
a holomorphic involution to prove Corollary~\ref{cor1.3} and prove the non-Steinness of all the compactifiable deformation families constructed above.

In Section 4, we prove Proposition~\ref{pro1.4} related to the non-Steinness conjecture of $S\setminus C$ proposed by Marco Brunella in \cite{Marco:2010}, give some discussions on the remaining work of the conjecture and construct deformation families in the sense of \cite{Gasparim:2021} to prove Proposition~\ref{pro1.5}.

\section{A deformation family of open Calabi-Yau manifolds}

\noindent
This section is to construct a nontrivial compactifiable deformation of open Calabi-Yau manifolds. 

Firstly, we give a complex analytic family of rational elliptic surfaces with global sections. Taking use of the notations from the Introduction Section, the following lemma is true.

\begin{lemma}
\label{lem2.1}
Let $C^*=\mathbb{C}/\left \langle 1,\tau^* \right \rangle $   with $\tau^* \in Y$. Then for any two holomorphic embedding maps $f_1$ and $f_2$ from the elliptic curve $C^*$ to \emph{$\text{CP}^2$}, there exists an algebraic isomorphism g such that $f_1=g \circ f_2$.
\end{lemma}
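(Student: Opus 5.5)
The plan is to reduce the statement to the classical fact that an embedding of a smooth curve in projective space is determined, up to a projective linear change of coordinates, by the pulled-back hyperplane bundle together with the space of sections it uses. Throughout I use the normalization fixed in the Introduction: each embedding $f$ sends the zero point of $C^*$ to an inflexion point of its image.

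\textbf{Step 1: the image is a cubic and the embedding is complete.} Let $f:C^*\to \text{CP}^2$ be a holomorphic embedding. By Chow's theorem $f(C^*)$ is a smooth algebraic plane curve of genus $1$. The degree--genus formula $g=(d-1)(d-2)/2$ then forces $d=3$.

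Put $L_f:=f^*\mathcal{O}_{\text{CP}^2}(1)$, a line bundle of degree $3$ on $C^*$. By Riemann--Roch, $h^0(C^*,L_f)=3$. The image is not contained in a line, so the pullbacks $f^*x,f^*y,f^*z$ of the coordinate sections are linearly independent. Hence they form a basis of $H^0(C^*,L_f)$, and $f$ is the map defined by the complete linear system $|L_f|$ in this basis.

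\textbf{Step 2: the two embeddings have isomorphic hyperplane bundles.} Let $Q=f(0)$ be the inflexion point. The tangent line $\ell$ to $f(C^*)$ at $Q$ meets the cubic only at $Q$, with multiplicity $3$. Therefore $f^*\ell=3[0]$ as divisors, and $L_f\cong\mathcal{O}_{C^*}(3[0])$.

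Applying this to both maps gives $L_{f_1}\cong L_{f_2}\cong \mathcal{O}(3[0])=:L$. Fix such an isomorphism. Then $(f_1^*x,f_1^*y,f_1^*z)$ and $(f_2^*x,f_2^*y,f_2^*z)$ are two bases of the same $3$-dimensional space $H^0(C^*,L)$. So there is $A\in GL_3(\mathbb{C})$ carrying the second basis to the first. The induced $g=[A]\in PGL_3(\mathbb{C})$ is an algebraic automorphism of $\text{CP}^2$, and pointwise we get $f_1(p)=g(f_2(p))$. The isomorphism of line bundles is unique only up to a scalar, but a scalar does not change the projective class of $g$.

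\textbf{Main obstacle: the normalization is necessary.} I expect the real difficulty to be justifying Step 2, because the lemma is false for completely arbitrary embeddings. If $f_2=f_1\circ t_a$ for a translation $t_a$ of $C^*$, then $L_{f_2}=t_a^*L_{f_1}$. Since $L_{f_1}$ has degree $3$, this equals $L_{f_1}$ only when $3a=0$. Thus two such embeddings differ by an element of $PGL_3$ only in that case, and in general no algebraic isomorphism $g$ exists.

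So in writing the proof I would state explicitly that the embeddings considered are those with $f^{-1}(Q)=0$ for an inflexion point $Q$, as in the Introduction. The argument above then goes through. If one prefers a weaker hypothesis, it suffices that $\deg$ agree and $L_{f_1}\cong L_{f_2}$; this is equivalent to requiring $f_1^{-1}(\text{line})$ and $f_2^{-1}(\text{line})$ to have the same sum in $C^*$.
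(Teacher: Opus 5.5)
Your argument is correct for what it proves, but it takes a genuinely different route from the paper.

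\textbf{The paper's route.} The paper reads $g$ as an algebraic isomorphism between the image cubics. It obtains $g$ abstractly: $f_1(C^*)$ and $f_2(C^*)$ have the same $j$-invariant, so they are isomorphic by Silverman's Proposition III.3.1.

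\textbf{Your route.} You use the inflexion normalization to identify both $f_i^*\mathcal{O}_{\text{CP}^2}(1)$ with $\mathcal{O}(3[0])$. Then $f_1,f_2$ come from two bases of one complete linear system, so $g\in PGL_3(\mathbb{C})$.

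\textbf{What your route buys.} It gives two things the paper's argument does not.
\begin{itemize}
\item The pointwise identity $f_1=g\circ f_2$. The $j$-invariant only yields \emph{some} isomorphism of the images, well defined only up to automorphisms of the curve. The paper in fact only asserts equality of the images.
\item The fact that $g$ is a global automorphism of $\text{CP}^2$. This is what the paper actually needs later. It is used to obtain the defining cubic of $f_\tau(C_0(\tau))$ from the Weierstrass cubic by a linear change of coordinates. In Section~4 it is used to transport the nine blow-up points, the surface $S$, the curve $C$, and the condition $\sum_{i=1}^9 f_\tau^{-1}(p_i)=0$ from one embedding to another.
\end{itemize}
Your translation example correctly shows that this stronger version depends on the normalization $f^{-1}(Q)=0$. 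The paper leaves that dependence implicit when it later allows $f_\tau$ to be an arbitrary embedding.

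\textbf{One correction.} You assert that the lemma is false for arbitrary embeddings and that ``in general no algebraic isomorphism $g$ exists''. This holds only if $g$ is required to lie in $PGL_3(\mathbb{C})$. In the paper's reading, $g:=f_1\circ f_2^{-1}\colon f_2(C^*)\to f_1(C^*)$ is a biholomorphism between smooth projective curves, hence algebraic by GAGA, for \emph{every} pair of embeddings. This one-line argument needs no normalization. It is also the simplest way to make the paper's version pointwise.
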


\begin{proof}
$f_1 (C_0 (\tau^* ))$  and $f_2 (C_0 (\tau^* ))$ are two elliptic curves in $\text{CP}^2$ with the same j-invariant. Taking use of \cite[Chapter III-Proposition~3.1(a)]{Silverman:2009} and \cite[Chapter III-Proposition~3.1(b)]{Silverman:2009}, we can get an algebraic isomorphism between $C_1=f_1 (C_0 (\tau))$ and $C_2=f_2 (C_0 (\tau))$.That is to say, there exist regular rational maps $g:C_1\rightarrow C_2$ and $h:C_2 \rightarrow C_1$ such that $f_2 (C_0 (\tau))=g\circ f_1 (C_0 (\tau))$ and $f_1 (C_0 (\tau))=h \circ f_2 (C_0 (\tau))$. So Lemma~\ref{lem2.1} is proved.              
\end{proof}

Here, $f_\tau (C_0 (\tau))$ is an algebraic curve embedded in $\text{CP}^2$. In particular, there is a specific embedding map $F_\tau$ for $C_0 (\tau)$ embedded in $\text{CP}^2$ induced by the Weierstrass elliptic function defined in \cite[Introduction]{xu:2025}. 

Taking use of Lemma~\ref{lem2.1} above, we have an algebraic isomorphism between $f_\tau (C_0 (\tau_0))$ and $F_\tau(C_0 (\tau_0))$ to give the defining equation of $f_\tau (C_0 (\tau))$ as 
\begin{eqnarray}
P_1 (\tau) z^3+P_2 (\tau)xz^2+P_3 (\tau)yz^2+P_4 (\tau) x^2 z+P_5 (\tau)xyz+P_6 (\tau) x^3  \nonumber \\
+P_7 (\tau) x^2 y+P_8 (\tau)xy^2+P_9 (\tau) y^3+P_{10} (\tau)zy^2=0
\end{eqnarray}

\noindent
with $P_i (\tau)$ being holomorphic functions in a sufficiently small disc neighborhood $U \subset Y$  of $\tau_0$ for $i \in \{1,…,10\}$.

Now we have the following lemma.

\begin{lemma}
\label{lem2.2}
\cite[Main Theorem~2.1(1)]{Fujimoto:1990}: For any $\tau \in Y$, $f_\tau (C_0 (\tau))$ has a fixed inflexion point $Q_\tau$ and $f_\tau^{-1} (Q_\tau)$ is the zero point of $C_0 (\tau)$. So $S \cong \text{CP}^2 \text{$\#$9} \overline{\text{CP}}^2$ is a rational elliptic surface with global sections if and only if $\sum_{i=1}^9 f_\tau^{-1} (p_i)=0$ mod $\left \langle 1,\tau\right \rangle$.
\end{lemma}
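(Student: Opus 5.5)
The plan has two parts. First we show that the embedded cubic has an inflexion point that can be taken as the origin. Then we translate the elliptic-surface condition into a linear-equivalence statement on the cubic and read it through the group law.

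\textbf{Step 1: the inflexion point.} By Lemma~\ref{lem2.1}, any embedding $f_\tau$ differs from the Weierstrass embedding $F_\tau$ of \cite[Introduction]{xu:2025} by an algebraic isomorphism $g$. We will take $g$ to be induced by a projective linear transformation of $\text{CP}^2$. This is possible because both images are plane cubics, and an isomorphism of smooth plane cubics preserving the class of the hyperplane divisor extends to $PGL_3$. Inflexion points are preserved under such transformations.

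For $F_\tau$, the image of $0\in C_0(\tau)$ is the point at infinity $[0:1:0]$. This point is a flex, since the line $z=0$ meets the Weierstrass cubic there with multiplicity $3$. We therefore set $Q_\tau := g(F_\tau(0)) = f_\tau(0)$, and $f_\tau^{-1}(Q_\tau)$ is the zero point by construction. Identifying $C_0(\tau)$ with $f_\tau(C_0(\tau))$ via $f_\tau$ and using $Q_\tau$ as origin, the group law is the usual chord--tangent one. In particular, three points of the curve lie on a line exactly when their sum is $0$. More generally, for a divisor $D$ of degree $3k$ on the cubic, $D\sim kH$ if and only if the sum of its points is $0$ in $\mathbb{C}/\langle 1,\tau\rangle$. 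This follows from Abel's theorem, using $H\sim 3Q_\tau$ because $Q_\tau$ is a flex.

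\textbf{Step 2: the criterion for an elliptic surface.} Let $S$ be the blow-up at $p_1,\dots,p_9$ with exceptional curves $E_i$. Then $-K_S = 3H-\sum E_i$ and $C\in|-K_S|$, so $C|_C$ is the degree-zero line bundle $\mathcal{O}_C(3H-\sum p_i)$. By Step 1, this bundle is trivial precisely when $\sum f_\tau^{-1}(p_i)=0$.

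If it is trivial, the sequence $0\to\mathcal{O}_S\to\mathcal{O}_S(C)\to\mathcal{O}_C\to 0$ together with $h^1(\mathcal{O}_S)=0$ gives $h^0(-K_S)=2$. Concretely, there is a second cubic $\{g=0\}$ through the nine points, and the pencil $\lambda f_{\tau,\infty}+\mu g$ defines a morphism $S\to\text{CP}^1$ whose fibers are anticanonical curves of self-intersection $0$. This is an elliptic fibration, and each $E_i$ satisfies $E_i\cdot(-K_S)=1$, so it is a global section.

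Conversely, suppose $S$ is a rational elliptic surface with a section. Then $-K_S$ is the fiber class, so $C$ moves in a pencil, $C|_C$ is trivial, and hence $\sum f_\tau^{-1}(p_i)=0$.

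Since the statement is cited from \cite[Main Theorem~2.1(1)]{Fujimoto:1990}, the proof can largely defer to that reference. The steps above are the outline I would present.

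\textbf{Main obstacle.} The delicate point is Step 2 when some of the $p_i$ are infinitely near. Then the divisor $\sum p_i$ has multiplicities, and one must check that the pencil still has no base points after blowing up. I would handle this by working with $C|_C$ and the exact sequence above rather than with explicit cubics, since that argument is insensitive to whether the points are infinitely near.
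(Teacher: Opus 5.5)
Your Step 1 has a genuine gap. In Lemma~\ref{lem2.1} the map $g=f_\tau\circ F_\tau^{-1}$ is determined by $f_\tau$, so you cannot choose it to be projective linear. It extends to an element of $\mathrm{PGL}_3$ exactly when $f_\tau^*\mathcal{O}(1)\cong F_\tau^*\mathcal{O}(1)\cong\mathcal{O}_{C_0(\tau)}(3[0])$. Every divisor in $|\mathcal{O}(1)|$ on a smooth plane cubic is cut out by a line, so that condition is equivalent to $f_\tau(0)$ being a flex, which is the very thing you set out to prove. The argument is therefore circular.

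For an arbitrary embedding the conclusion is also false. Take $f_\tau=F_\tau\circ t_a$ with $t_a(x)=x+a$ and $3a\not\equiv 0$. Then $f_\tau(0)=F_\tau(a)$ is not a flex. Since $F_\tau^{-1}(p_i)=f_\tau^{-1}(p_i)+a$, your own Step 2 turns the criterion into $\sum_{i=1}^9 f_\tau^{-1}(p_i)\equiv -9a$. So the stated equivalence fails whenever $9a\not\equiv 0$.

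The first sentence of the lemma therefore cannot be derived from Lemma~\ref{lem2.1}; it has to be a normalization of $f_\tau$. That is how the paper uses it. The Introduction stipulates that $f_\tau^{-1}(Q_\tau)$ is the zero point, the proof reads the flex off the construction of (2.1), and the equivalence is quoted from \cite[Main Theorem~2.1(1)]{Fujimoto:1990}.

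Replace Step 1 by the hypothesis $f_\tau^*\mathcal{O}(1)\cong\mathcal{O}_{C_0(\tau)}(3[0])$, equivalently $f_\tau=A\circ F_\tau$ with $A\in\mathrm{PGL}_3$, so that $Q_\tau=A([0:1:0])$. With that change the rest of your proposal is sound. Step 2 is then a self-contained proof of what the paper only cites:
\begin{itemize}
\item $C|_C\cong\mathcal{O}_C(3H-\sum p_i)$;
\item $h^1(\mathcal{O}_S)=0$ gives a pencil;
\item $C^2=0$ together with irreducibility of $C$ makes the pencil base-point free;
\item the converse follows from $-K_S\sim F$ on a rational elliptic surface with a section, which has no multiple fibres.
\end{itemize}
One small correction: the quotient in your exact sequence should be $\mathcal{O}_C(C)$, which equals $\mathcal{O}_C$ only in the case you are treating.
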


Here, for any $\tau \in Y$, taking use of the construction of the defining equation (2.1) above, there is a fixed inflexion point $Q_\tau$ on the elliptic curve $f_\tau (C_0 (\tau))$. In addition, the natural group structure on $f_\tau (C_0 (\tau))$ coincides with the group structure on $\mathbb{C}/\left \langle 1,\tau\right \rangle$ through the holomorphic embedding map $f_\tau$ with $Q_\tau=f_\tau(0) $ being the identity. The proof of Lemma~\ref{lem2.2} is direct from \cite[Main Theorem~2.1(1)]{Fujimoto:1990}.

Now let $\pi$ be the natural projection mapping $\text{CP}^2 \times U$ to U. In addition, let $S_1=\left \{(x,\tau) \in \text{CP}^2 \times U|x \in f_\tau (C_0 (\tau))\right \}$.

\begin{lemma}
\label{lem2.3}
\emph{$(\text{CP}^2\times U,U,\pi)$} and $(S_1,U,\pi|_{S_1 })$ are complex analytic families.
\end{lemma}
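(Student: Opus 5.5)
The plan is to use the Kodaira–Spencer definition of a complex analytic family. We must exhibit a complex manifold $\mathcal{M}$ and a holomorphic map $\varpi:\mathcal{M}\to U$ such that:
\begin{itemize}
\item $\varpi$ is a proper surjective submersion, i.e. the rank of its Jacobian equals $\dim U=1$ everywhere;
\item every fiber $\varpi^{-1}(\tau)$ is a compact connected complex manifold.
\end{itemize}
For $(\text{CP}^2\times U,U,\pi)$ this is immediate. The space $\text{CP}^2\times U$ is a complex manifold of dimension $3$, and $\pi$ is the projection onto the second factor, hence a holomorphic submersion. Each fiber $\pi^{-1}(\tau)=\text{CP}^2\times\{\tau\}$ is compact and connected. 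Properness follows from the compactness of $\text{CP}^2$, since the preimage of a compact $K\subset U$ is $\text{CP}^2\times K$.

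For $(S_1,U,\pi|_{S_1})$, I would use the defining equation (2.1). Put
\[
F(x,y,z,\tau)=P_1(\tau)z^3+\cdots+P_{10}(\tau)zy^2 .
\]
It is homogeneous of degree $3$ in $[x:y:z]$ and holomorphic in $\tau\in U$, because the $P_i$ are holomorphic on $U$. Hence
\[
S_1=\{([x:y:z],\tau)\,:\,F(x,y,z,\tau)=0\}
\]
is an analytic hypersurface in $\text{CP}^2\times U$. The first step is to check that $S_1$ is smooth, of dimension $2$. Work in an affine chart of $\text{CP}^2$. For each fixed $\tau$, the curve $f_\tau(C_0(\tau))$ is the image of the embedding $f_\tau$, hence a smooth cubic. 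So at every point of $S_1$ the partial derivatives of $F$ in the $\text{CP}^2$ directions do not all vanish. In particular $dF\neq 0$ along $S_1$, and the holomorphic implicit function theorem makes $S_1$ a complex submanifold of dimension $2$.

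The second step is the submersion property, which comes from the same non-vanishing. At $p=([x:y:z],\tau)\in S_1$ the tangent space is $\ker dF_p$. Since some spatial partial derivative of $F$ is nonzero, we can solve for that spatial coordinate as a function of the remaining spatial coordinate and $\tau$. Then $\tau$ is one of the local coordinates on $S_1$, so $d(\pi|_{S_1})_p$ is onto. Properness holds because $S_1$ is closed in $\text{CP}^2\times U$ and $\pi$ is proper. Each fiber is $f_\tau(C_0(\tau))\times\{\tau\}\cong C_0(\tau)$, which is compact and connected.

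The main obstacle is the smoothness of the fibers uniformly over $U$. This needs the coefficients $P_i(\tau)$ obtained through Lemma~\ref{lem2.1} to define, for every $\tau\in U$, exactly the embedded curve $f_\tau(C_0(\tau))$, so that its discriminant does not vanish. I would argue in two steps.
\begin{itemize}
\item At $\tau_0$ the cubic is nonsingular, since it is the image of an embedding.
\item Nonsingularity is an open condition: the discriminant is a polynomial in the $P_i(\tau)$, hence holomorphic in $\tau$ and nonzero at $\tau_0$.
\end{itemize}
So after shrinking the disc $U$ if needed, every fiber is a smooth elliptic curve. This is consistent with $U$ being taken sufficiently small in the construction.
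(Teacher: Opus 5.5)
Your proposal is correct and follows essentially the paper's route. The paper's proof is a single remark: since the defining equation (2.1) has coefficients $P_i(\tau)$ holomorphic on $U$, the verification is routine and parallels Lemma~3.2 of the author's earlier paper. Your implicit-function-theorem check (smoothness of $S_1$, rank one of $d(\pi|_{S_1})$, properness, compact connected fibers) is exactly that routine verification, and you also make explicit a point the paper leaves implicit: the spatial gradient of $F$ is nonvanishing because $F(\cdot,\tau)$ is a nonzero reduced cubic. That holds automatically since each fiber is a smooth irreducible cubic, or alternatively after shrinking $U$ so the discriminant stays nonzero.
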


Depending on the defining equation (2.1) of $f_\tau (C_0 (\tau))$ above with coefficients $P_i (\tau)$ being holomorphic for $i\in \{1,\cdots,10\}$, the proof of Lemma~\ref{lem2.3} is simple and similar to the proof of \cite[Lemma~3.2]{xu:2025}. 

Let $S^*=S_1 \times W_1 \times \cdots \times W_8$. Moreover, let $f_9 (\widetilde{t}) \in C_0 (\tau)$ be the point fixed by the equation
\[ \sum_{j=1}^8 f_{\tau,j}(\widetilde{p}_j)+f_9(\widetilde{t})=0 \quad  mod \quad \langle 1,\tau \rangle \]
for $\widetilde{t}=(\tau,\widetilde{p}_1,…,\widetilde{p}_8) \in T$. Define $\pi$ to be the natural projection mapping $\text{CP}^2 \times T$ to T. Then $(\text{CP}^2 \times T,T,\pi )$ is a complex analytic family of $\text{CP}^2$ and $(S^*,T,\pi|_{S^*}  )$ is a complex analytic family of elliptic curves.

Now we blow up nine points on each fiber of the complex analytic family $(\text{CP}^2 \times T,T,\pi )$. For any  $\widetilde{t}=(\tau,\widetilde{p}_1,…,\widetilde{p}_8) \in T$, we blow up nine points in the set
\[\{f_\tau (f_{\tau,1} (\widetilde{p}_1 )),…,f_\tau (f_{\tau,8} (\widetilde{p}_8 )),f_\tau (f_9 (\widetilde{t}))\}\times \{\widetilde{t}\} \subset \pi|_{S^*}^{-1} (\widetilde{t})\]
on the fiber $\pi^{-1} (\widetilde{t})\cong \text{CP}^2$.

Then we can get an 11-dimensional complex manifold $\widetilde{\mathcal{F}}$. Now let $\widetilde{\mathcal{S}} \subset \widetilde{\mathcal{F}}$ be the strict transform of $S^*$. In addition, let $\widetilde{\pi}:\widetilde{\mathcal{F}} \rightarrow T$ be a map induced by $\pi$. That is to say, for any $\widetilde{t} \in T$,  $\widetilde{\pi}^{-1} (\widetilde{t})\cong \text{CP}^2 \text{$\#$9} \overline{\text{CP}}^2$ is the blow-up of 
$\pi^{-1} (\widetilde{t})$ at nine points on $\pi|_{S^*}^{-1} (\widetilde{t})$. Then we have the following theorem.

\begin{theorem}
\label{thm2.4}
$(\widetilde{\mathcal{F}},T,\widetilde{\pi})$ is a complex analytic family of rational elliptic surfaces over the nine-dimensional complex manifold T and $(\widetilde{\mathcal{S}},T,\widetilde{\pi}|_{\widetilde{\mathcal{S}}})$ is a complex analytic family of elliptic curves over T.
\end{theorem}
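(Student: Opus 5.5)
The plan is to realize $\widetilde{\mathcal{F}}$ as an iterated blow-up of the total space $\text{CP}^2\times T$ along nine pairwise disjoint complex submanifolds of codimension two, each a holomorphic section of $\pi$. Then I would check that $\widetilde{\pi}$ is a proper holomorphic submersion, so it is a complex analytic family in the sense of Kodaira. The family of curves will come for free as the strict transform of $S^*$.

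\textbf{The sections.} For $\upsilon\in\{1,\dots,8\}$ define $\sigma_\upsilon:T\to \text{CP}^2\times T$ by $\sigma_\upsilon(\widetilde{t})=(f_\tau(f_{\tau,\upsilon}(\widetilde{p}_\upsilon)),\widetilde{t})$. Define $\sigma_9$ the same way using $f_9(\widetilde{t})=-\sum_{j}f_{\tau,j}(\widetilde{p}_j)$ mod $\langle 1,\tau\rangle$.
\begin{itemize}
\item Holomorphy: I first show each $\sigma_i$ is holomorphic. Since $f_\tau$ comes from the Weierstrass embedding $F_\tau$ composed with the algebraic isomorphism of Lemma~\ref{lem2.1}, the map $(\tau,z)\mapsto f_\tau(z \bmod \langle 1,\tau\rangle)$ is given locally by $(\wp(z;\tau),\wp'(z;\tau))$ followed by a projective linear change with coefficients holomorphic in $\tau$. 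This is holomorphic on $U\times\mathbb{C}$ away from lattice points and extends holomorphically through them in the chart around $Q_\tau$. Hence $\sigma_9$ is holomorphic too, because $-\sum_j \widetilde{p}_j$ is a holomorphic lift to $\mathbb{C}$.
\item Image in $S^*$: each $\sigma_i(T)$ is a closed submanifold of codimension $2$ contained in $S^*$, and by Lemma~\ref{lem2.3} $S^*$ is a smooth hypersurface fibred over $T$.
\item Disjointness: shrinking $U$ and the discs $W_\upsilon$, the nine points are distinct for every $\widetilde{t}$, provided they are distinct at the center $o$, so the images $\sigma_i(T)$ are pairwise disjoint. I would assume this is part of the choice of $o$; if infinitely near points were intended, the blow-ups would have to be performed successively instead.
\end{itemize}

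\textbf{Blowing up.} Blowing up $\text{CP}^2\times T$ along the disjoint union of the $\sigma_i(T)$ gives a complex manifold $\widetilde{\mathcal{F}}$ of dimension $11$ with a holomorphic map $\widetilde{\pi}$. I would then check the fibre structure in local coordinates. Near $\sigma_i(T)$, choose fibre coordinates $(u,v)$ centered at the section, obtained by translating by $\sigma_i$. The blow-up is then locally (blow-up of $\mathbb{C}^2$ at $0$)$\times T$. This shows three things:
\begin{itemize}
\item $\widetilde{\pi}$ is a submersion;
\item $\widetilde{\pi}$ is proper, because $\widetilde{\pi}$ composed with the blow-down map is proper;
\item each fibre is the blow-up of $\text{CP}^2$ at the nine points, i.e.\ $\text{CP}^2\#9\overline{\text{CP}}^2$.
\end{itemize}
By Lemma~\ref{lem2.2} each such fibre is a rational elliptic surface with global sections, since $\sum f_\tau^{-1}(p_i)=0$ by construction.

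\textbf{The family of curves.} The strict transform $\widetilde{\mathcal{S}}$ of the smooth hypersurface $S^*$ is isomorphic to $S^*$. This holds because $S^*$ contains the codimension-one (in $S^*$) centers $\sigma_i(T)$, and blowing up a smooth divisor inside a smooth hypersurface leaves that hypersurface unchanged. So $\widetilde{\pi}|_{\widetilde{\mathcal{S}}}$ is identified with $\pi|_{S^*}$, which is already a complex analytic family of elliptic curves. Concretely, in the local coordinates above, $S^*$ is a graph $v=h(u,\widetilde{t})$ with $h(0,\widetilde{t})=0$, and its strict transform maps isomorphically onto it.

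\textbf{Main obstacle.} The step I expect to be hardest is the holomorphic dependence of the blow-up centers on $\tau$. This requires showing that $f_\tau$ varies holomorphically in $\tau$ jointly with the point, including near the inflexion point $Q_\tau$, using the holomorphy of the coefficients $P_i(\tau)$ in (2.1). I would handle it by working in the affine chart around $Q_\tau$, where $1/\wp'$ is a holomorphic coordinate.
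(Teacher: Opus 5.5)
Your proposal is correct and follows essentially the paper's route: the paper also blows up $\text{CP}^2\times T$ along the loci $M_1,\dots,M_9$ traced by the nine points, shows that the Jacobian of $\widetilde{\pi}$ has rank $9$ everywhere, and takes $\widetilde{\mathcal{S}}$ to be the strict transform of $S^*$; it does the blow-ups one at a time in explicit charts, which agrees with your simultaneous blow-up once the centers are disjoint. Your version is more complete on three points the paper only asserts: the holomorphic dependence of the centers on $(\tau,\widetilde p)$, their disjointness after shrinking $T$, and the isomorphism $\widetilde{\mathcal{S}}\cong S^*$ (the centers being smooth divisors in $S^*$), which is what actually shows $\widetilde{\pi}|_{\widetilde{\mathcal{S}}}$ is a submersion with elliptic-curve fibres.
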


\begin{proof}
Fixing a point $(x,y) \in S^* \subset \text{CP}^2 \times T$ with $y \in T$, $(x,y)$ can be deformed to a submanifold $M_1$ of $S^* \subset \text{CP}^2 \times T$.

Let $\Delta_{x,y} \subset \pi^{-1} (y) \cong \text{CP}^2$ be a disc neighborhood of $(x,y)$. The neighborhood $\Delta_{x,y}$ of $(x,y)$ in $\pi^{-1} (y)$ can be deformed to be a neighborhood $\Delta^*$ of $M_1$ in $\text{CP}^2 \times T$. Now for a point $(s,t) \in M_1$ with $t \in T$, let $l=[l_1:l_2:l_3]$ and $b=[b_1:b_2 ]$ be the homogeneous coordinates of $\text{CP}^2 \times \{t\}$ and $\text{CP}^1$ respectively.

Let $U_i=\{[x_1:x_2:x_3 ] \in \text{CP}^2 |x_i \neq 0\}$ for $i \in I=\{1,2,3\}$. Let $\varphi_1 ([x_1:x_2:x_3])=(x_2/x_1,$ $x_3/x_1 )$ for $ [x_1:x_2:x_3 ] \in U_1$, $\varphi_2 ([x_1:x_2:x_3])=(x_1/x_2 ,x_3/x_2 )$ for $[x_1:x_2:x_3 ] \in U_2$ and $\varphi_3 ([x_1:x_2:x_3])=(x_1/x_3 ,x_2/x_3 )$ for $[x_1:x_2:x_3 ] \in U_3$. Then $\{(U_i,\varphi_i)\}_{i\in I}$ is an atlas on $\text{CP}^2$. In addition, let $\phi_i (x,y)=(\varphi_i (x),y)$ for $y \in T$ and $x\in U_i$  with $i \in I$. Then $\{ (U_i \times T,\phi_i) \}_{i \in I}$ is an atlas of $\text{CP}^2 \times T$. 

Now there exists $j \in I$ such that $(s,t) \in U_j \times T$. Then let $\widetilde{\Delta}_{s,t} \subset \Delta_{s,t} \times \text{CP}^1$ be a submanifold of $\Delta_{s,t} \times \text{CP}^1$ defined by 
\begin{eqnarray}
\widetilde{\Delta}_{s,t} = \{(l,t,b) \in \Delta_{s,t} \times \text{CP}^1 | \varphi_j (l )=(l_{1,j}^*,l_{2,j}^* ), \varphi_j (s)=(s_1,s_2 ) \quad \& \nonumber \\
\forall \quad m \neq n \in \{1,2\}, \quad b_m (l_{n,j}^*-s_n)=b_n (l_{m,j}^*-s_m)  \} \nonumber
\end{eqnarray}

\noindent
with $\Delta_{s,t}=\Delta^* \cap \pi^{-1} (t)$. Now we have a projection $\pi_{s,t}:\widetilde{\Delta}_{s,t} \rightarrow \Delta_{s,t} \subset \pi^{-1} (t)$ defined by $\pi_{s,t}(l,t,b)=(l,t)$. Then we can define a blow-up $\widetilde{M}_{s,t}$ of $\text{CP}^2 \times \{t\} \cong \pi^{-1} (t)$ at $(s,t)$ as the complex manifold
\[\widetilde{M}_{s,t}=\pi^{-1}(t)-\{(s,t)\}\cup_{\pi_{s,t}} \widetilde{\Delta}_{s,t}\]
with the natural projection map $\widetilde{\pi}_{s,t}:\widetilde{M}_{s,t} \rightarrow \pi^{-1} (t)$. Here, $\widetilde{\pi}_{s,t}|_{\widetilde{\Delta}_{s,t}}=\pi_{s,t}$.

Now $\Delta=\cup_{(s,t) \in M_1}\widetilde{\Delta}_{s,t}$  is a submanifold of $\Delta^* \times \text{CP}^1$. Moreover, we have a projection $\pi_\Delta:\Delta \rightarrow \Delta^*$ defined by $\pi_\Delta|_{\widetilde{\Delta}_{s,t}}=\widetilde{\pi}_{s,t}|_{\widetilde{\Delta}_{s,t}}=\pi_{s,t}$.

Then we have a blow-up $\widetilde{M}_1$ of $\text{CP}^2 \times T$ along $M_1$ defined to be the complex manifold
\[\widetilde{M}_1=\text{CP}^2 \times T-M_1 \cup_{\pi_\Delta}\Delta\]
together with the natural projection map $\pi_{\widetilde{M}_1}:\widetilde{M}_1 \rightarrow \text{CP}^2 \times T$. Here, $\pi_{\widetilde{M}_1} |_\Delta=\pi_\Delta$.

Now let $\widetilde{\pi}_{\widetilde{M}_1}:\widetilde{M}_1 \rightarrow T$ be the map defined by $\widetilde{\pi}_{\widetilde{M}_1}=\pi \circ \pi_{\widetilde{M}_1 }$.

Let 
\[U_{i,j}=\{(l,t,d) \in \Delta|l \in U_i,d=(d_1,d_2 ) \quad with \quad d_j \neq 0,t \in T\}\]
for $j \in J=\{1,2\}$ and $i \in I$. For any $(i,j) \in I \times J$, we can define maps as below:
\[ \varphi_{i,j}^k (l,t,d)=\begin{cases}
    \frac{l_{k,i}^*-s_k}{l_{j,i}^*-s_j}=\frac{d_k}{d_j} \quad if \quad l_{j,i}^* \neq s_j\\
    \frac{d_k}{d_j} \quad  if  \quad l_{j,i}^*=s_j 
\end{cases},
\varphi_i (s)=(s_1,s_2) \quad with \]
\[\{(s,t)\}=M_1 \cap \pi^{-1} (t),  
 \varphi_i (l)=(l_{1,i}^*,l_{2,i}^* ) \quad for \quad k \neq j \quad \& \quad k \in \{1,2\},\]
\[\varphi_{i,j}^k (l,t,d)=l_{j,i}^*, \varphi_i (l)=(l_{1,i}^*,l_{2,i}^* ) \quad for \quad k=j,\]
\[  \varphi_{i,j}^k (l,t,d)=t_{k-2} \quad for \quad t=(t_1,…,t_9) \quad \&  \quad k \in \{3,…,11\} \]
with $(l,t,d) \in U_{i,j} \subset \Delta$, $l \in U_i$, $t \in T$ and $d=(d_1,d_2 )$ with $d_j \neq 0$. 

Then $\{(U_{i,j},\varphi_{i,j}=(\varphi_{i,j}^1,…,\varphi_{i,j}^{11}))\}_{(i,j) \in I \times J}$ is an atlas of the submanifold $\Delta$ of $\widetilde{M}_1$. Now $\widetilde{M}_1$ is an 11-dimensional complex manifold.

In addition, from the construction above, it is obvious that $\pi_{\widetilde{M}_1}|_{\widetilde{M}_1\setminus \pi_{\widetilde{M}_1}^{-1} (M_1)}$ is a biholomorphic map. Here, $\pi_{\widetilde{M}_1}$ and $\widetilde{\pi}_{\widetilde{M}_1}$ are holomorphic. On $ \widetilde{M}_1 \setminus \pi_{\widetilde{M}_1}^{-1} (M_1)$, the Jacobian matrix of $\widetilde{\pi}_{\widetilde{M}_1}$ is
\[\begin{pmatrix}
\begin{matrix}
    0&0\\
    \dots & \dots \\
    0&0 
\end{matrix}
\text{E}
\end{pmatrix}\]
with E being a 9×9 identity matrix.

Now define 
\[h_{i,j} (l,t,d)=\varphi_{i,j}^j (l,t,d)-[\varphi_i (s)]_j\] 
for  $(l,t,d) \in U_{i,j}$ with $l \in U_i$, $t \in T$, $\{(s,t)\}=M_1 \cap \pi^{-1} (t)$ and $\varphi_i(s)=(s_1,s_2)$. Then $\pi_{\widetilde{M}_1}^{-1} (M_1)\cap U_{i,j}$ is defined by the single equation $h_{i,j} (l,t,d)=0$. So $\pi_{\widetilde{M}_1}^{-1}(M_1)$ is a smooth hypersurface in $\widetilde{M}_1$.

Since the exceptional divisor $\pi_{\widetilde{M}_1}^{-1} (M_1)$ is just a hypersurface of $\widetilde{M}_1$, the Jacobian matrix of $\widetilde{\pi}_{\widetilde{M}_1}$ at every point of $\pi_{\widetilde{M}_1}^{-1} (M_1)$ is also
\[\begin{pmatrix}
\begin{matrix}
    0&0\\
    \dots & \dots \\
    0&0 
\end{matrix}
\text{E}
\end{pmatrix}\]
with E being the 9×9 identity matrix. This can also be computed through the atlas of the submanifold $\Delta$ given above. Now the rank for the Jacobian matrix of $\widetilde{\pi}_{\widetilde{M}_1}$ is 9 at every point of $\widetilde{M}_1$.

Blowing up $\widetilde{M}_i$ along the remaining eight submanifolds $M_{i+1}$ traced by the blow-up points corresponding to the set T one by one with the similar construction method as above for $i\in \{1,…,8\}$, we can get the manifold $\widetilde{M}_i$ for $i\in \{2,…,9\}$ with $\widetilde{M}_9 = \widetilde{\mathcal{F}}$. Then we can easily get the fact that $\widetilde{\pi}:\widetilde{\mathcal{F}} \rightarrow T$ is holomorphic and the rank for the Jacobian matrix of the map is 9 at every point of $\widetilde{\mathcal{F}}$. It is obvious that $\widetilde{\pi}^{-1}(y)\cong \text{CP}^2 \text{$\#$9} \overline{\text{CP}}^2$ is a compact complex submanifold of $\widetilde{\mathcal{F}}$ for any $y \in T$. So $(\widetilde{\mathcal{F}},T,\widetilde{\mathcal{\pi}})$ is a complex analytic family of $\text{CP}^2 \text{$\#$9} \overline{\text{CP}}^2$.

$\widetilde{S}$ which is a complex submanifold of $\widetilde{\mathcal{F}}$ is the strict transform of $S^*$ . So $\widetilde{\pi}|_{\widetilde{S}}$  is a holomorphic map with the rank of the Jacobian matrix being 9 at every point of $\widetilde{S}$.

In addition, $\forall x \in T$, $\widetilde{\pi}|_{\widetilde{S}}^{-1}(x)$ is the strict transform of $\pi|_{S^*}^{-1}(x)$ in $\widetilde{\pi}^{-1}(x)$. So $\widetilde{\pi}|_{\widetilde{S}}^{-1}(x)$ is a compact complex submanifold of $\widetilde{S}$. Then $(\widetilde{S},T,\widetilde{\pi}|_{\widetilde{S}})$ is a complex analytic family. So Theorem~\ref{thm2.4} is proved.  
\end{proof}

Moreover, we can have a corollary as follows:

\begin{corollary}
\label{cor2.5}
The blow-up of \emph{$\text{CP}^2 \times T$} along the submanifold $M_1$ constructed above is just the blow-up of \emph{$\text{CP}^2 \times T$} centered at $M_1$ defined in \cite[Chapter VII-12.3]{Demailly:1997}.
\end{corollary}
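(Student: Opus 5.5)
The plan is to compare the two constructions directly through local charts and then invoke the uniqueness of the blow-up. In Demailly's Chapter VII-12.3, the blow-up of a complex manifold $X$ along a closed submanifold $Y$ of codimension $s$ is defined chart by chart. Near a point of $Y$, take a chart of $X$ in which $Y=\{z_1=\cdots=z_s=0\}$. Over this chart the blow-up is the submanifold of $\text{chart}\times \text{CP}^{s-1}$ cut out by $z_m w_n = z_n w_m$, and these local pieces are glued along the canonical transition maps. Demailly shows the result is independent of the coordinates chosen, and that it is characterized up to unique isomorphism over $X$ by two properties: the exceptional set is a smooth hypersurface, and the map is biholomorphic off it.

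\textbf{Step 1 (local straightening).} Here $X=\text{CP}^2\times T$ and $Y=M_1$, so the codimension is $s=2$. In the chart $U_j\times T$ with coordinates $(l^*_{1,j},l^*_{2,j},t)$, the submanifold $M_1$ is the graph $\{(\varphi_j(s(t)),t)\}$ of a holomorphic section $t\mapsto s(t)=(s_1(t),s_2(t))$. It is holomorphic because $f_\tau$, $f_{\tau,\upsilon}$ and $f_9$ depend holomorphically on the parameters, by Lemma~\ref{lem2.3}. The change of coordinates $z_k=l^*_{k,j}-s_k(t)$ for $k=1,2$, keeping $t$, is biholomorphic and puts $M_1$ in the form $\{z_1=z_2=0\}$.

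\textbf{Step 2 (matching equations).} In these coordinates, the defining relation $b_m(l^*_{n,j}-s_n)=b_n(l^*_{m,j}-s_m)$ of $\widetilde{\Delta}_{s,t}$, taken over all $t$ to form $\Delta$, becomes exactly Demailly's equation $z_m b_n = z_n b_m$ in $\Delta^*\times\text{CP}^1$. So on each chart $U_j\times T$, $\Delta$ coincides with Demailly's local blow-up. The charts $\varphi_{i,j}$ written in the proof of Theorem~\ref{thm2.4} are then precisely Demailly's affine charts $(z_j,\,z_k/z_j,\,t)$ on $\{b_j\neq0\}$.

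\textbf{Step 3 (gluing).} Both spaces are obtained by gluing these local pieces to $\text{CP}^2\times T\setminus M_1$ through the projection. The paper glues with $\pi_\Delta$, and Demailly glues with the canonical projection. Since the two projections agree on the overlaps, the identity maps on the local pieces patch to a global biholomorphism commuting with the projections to $\text{CP}^2\times T$. Alternatively, one can appeal to the universal property: Theorem~\ref{thm2.4} showed that $\pi_{\widetilde{M}_1}^{-1}(M_1)$ is a smooth hypersurface and that $\pi_{\widetilde{M}_1}$ is biholomorphic off it, which yields the identification directly.

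\textbf{Main obstacle.} The delicate point is checking that the paper's construction is well defined independently of the chart index $j$ used for $\widetilde{\Delta}_{s,t}$, i.e.\ that on $U_i\cap U j$ the two $\text{CP}^1$-coordinates are related by the linear action of the differential of the coordinate change. Working directly with the paper's affine differences $l^*-s$ is awkward here. I would handle it by noting that the fiberwise coordinate change on $\text{CP}^2$ sends $M_1$ to itself. Its normal differential along $M_1$ therefore induces a projective linear map on the $\text{CP}^1$ factors. This is exactly Demailly's independence-of-coordinates lemma, which I would cite rather than recompute.
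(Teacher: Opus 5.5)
Your main argument (Steps 1--3, with overlap compatibility handed to Demailly's coordinate-independence lemma) is correct, but it is organized differently from the paper's proof.

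\textbf{How the paper argues.} It does not straighten $M_1$ or compare incidence equations. It takes three facts from the proof of Theorem~\ref{thm2.4}: $\widetilde{M}_1$ is a manifold, $\pi_{\widetilde{M}_1}$ is biholomorphic off $\pi_{\widetilde{M}_1}^{-1}(M_1)$, and that set is a smooth hypersurface. It then realizes $N_{M_1/\text{CP}^2\times T}$ as the vertical tangent spaces $T'_{(s,t)}(\text{CP}^2\times\{t\})$, with frames $\partial/\partial x_{1,i},\partial/\partial x_{2,i}$. It builds atlases on $N$ and $P(N)$. Finally it writes down an explicit map $f_N(l_t,t,d)=(l_t,t,P(d_1\partial/\partial x_{1,i}+d_2\partial/\partial x_{2,i}))$, identifying the exceptional divisor with $P(N_{M_1/\text{CP}^2\times T})$.

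\textbf{How the two routes relate.} They are compatible. Your $z_k=l^*_{k,j}-s_k(t)$ keeps $t$ fixed, so $\partial/\partial z_k$ is exactly the paper's vertical frame, and your $\text{CP}^1$-coordinate $[b]$ on the exceptional set is the paper's $f_N$.

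Your route checks that the complex structure of $\widetilde{M}_1$ transversal to the exceptional divisor is Demailly's. The paper only identifies the divisor with $P(N)$ as a manifold and then concludes. Your route also isolates and settles the overlap issue. The sets $\widetilde{\Delta}_{s,t}$ are built with a point-dependent chart index, and the paper simply asserts that $f_N$ is globally well defined. You settle this through the Hadamard factorization $z^{(i)}=A(z)\,z^{(j)}$, whose restriction to $M_1$ is the normal differential. The paper's route, for its part, gives an explicit global description of the exceptional divisor as a projectivized normal bundle.

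\textbf{Two corrections.}
\begin{enumerate}
\item The paper's charts $\varphi_{i,j}$ use $l^*_{j,i}$, not $z_j=l^*_{j,i}-s_j(t)$. They differ from Demailly's charts by the shear $(a,b,t)\mapsto(a-s_j(t),b,t)$. This is harmless, but they are not ``precisely'' equal.
\item You should drop or reformulate the ``universal property'' alternative. A smooth exceptional hypersurface together with biholomorphicity off it is not a characterization Demailly proves, nor is it the universal property of blowing up. That property needs the pulled-back ideal $\pi_{\widetilde{M}_1}^{-1}\mathcal{I}_{M_1}\cdot\mathcal{O}_{\widetilde{M}_1}$ to be invertible. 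Here it is, since $(z_1,z_2)$ pulls back to $(z_j)$ on $\{b_j\neq 0\}$. Even then, the property only gives a map to the blow-up, and proving that map invertible is again your local computation.
\end{enumerate}
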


\begin{proof}
From the proof of Theorem~\ref{thm2.4}, $\widetilde{M}_1$ is an 11-dimensional complex manifold and $\pi_{\widetilde{M}_1}$ is a holomorphic map. $\pi_{\widetilde{M}_1}|_{\widetilde{M}_1 \setminus \pi_{\widetilde{M}_1}^{-1} (M_1)} $ is a biholomorphic map and $\pi_{\widetilde{M}_1}^{-1}(M_1)$ is a smooth hypersurface in $\widetilde{M}_1$.

We can construct the normal bundle $N_{M_1/{\text{CP}^2 \times T}}$ of $M_1$ in $\text{CP}^2 \times T$ as the collection of the holomorphic tangent spaces $T^{'}_{(s,t)}{(\text{CP}^2 \times \{t\})}$ for $(s,t) \in M_1$.

For any $i \in I$, $(x,t) \in U_i \times T$ and $(l_t,t)\in (U_i \times T) \cap M_1$ with $t \in T$, let $(l_{1,t,i}^*,l_{2,t,i}^* )=\varphi_i(l_t)$ and $(x_{1,i},x_{2,i} )=\varphi_i(x)$. Then the vector fields in the set 
\[\{\partial/\partial x_{1,i}|_{x_{1,i}=l_{1,t,i}^*},\partial/\partial x_{2,i}|_{x_{2,i}=l_{2,t,i}^*}\}\]
\noindent
provide a holomorphic frame of $N_{M_1/\text{CP}^2 \times T}|_{(U_i\times T)\cap M_1 }$.

Define 
\[h_i(l_t,t,b_{1,i} \partial/\partial x_{1,i}|_{x_{1,i}=l_{1,t,i}^*}+b_{2,i} \partial/\partial x_{2,i}|_{x_{2,i}=l_{2,t,i}^*})=(t,b_{1,i},b_{2,i} )\]
for $(l_t,t,b_{1,i} \partial/\partial x_{1,i}|_{x_{1,i}=l_{1,t,i}^*}+b_{2,i} \partial/\partial x_{2,i}|_{x_{2,i}=l_{2,t,i}^*}) \in N_{M_1/\text{CP}^2 \times T}|_{(U_i \times T)\cap M_1}$. 

Then $\{(N_{M_1/\text{CP}^2 \times T} |_{(U_i \times T) \cap M_1 },h_i) \}_{i \in I}$ is an atlas on the total space $N_{M_1/\text{CP}^2 \times T}$.

For $j\in J$, let
\[U_{i,j}^*=\{(l_t,t,P_{l_t,t} (b_{1,i} \partial/\partial x_{1,i}|_{x_{1,i}=l_{1,t,i}^*}+b_{2,i} \partial/\partial x_{2,i}|_{x_{2,i}=l_{2,t,i}^*}))\]
\[\in P(N_{M_1/\text{CP}^2 \times T})|_{(U_i \times T)\cap M_1} | b_{j,i} \neq 0\}\]
with 
\[P_{l_t,t} (b_{1,i} \partial/\partial x_{1,i}|_{x_{1,i}=l_{1,t,i}^*}+b_{2,i} \partial/\partial x_{2,i}|_{x_{2,i}=l_{2,t,i}^*} )\]
\[=(b_{1,i} \partial/\partial x_{1,i}|_{x_{1,i}=l_{1,i}^*}+b_{2,i} \partial/\partial x_{2,i}|_{x_{2,i}=l_{2,t,i}^*})/\sim\]
where $\sim$ is the equivalence relation generated by
\[b_{1,i} \partial/\partial x_{1,i}|_{x_{1,i}=l_{1,t,i}^*}+b_{2,i} \partial/\partial x_{2,i}|_{x_{2,i}=l_{2,t,i}^*} \sim \lambda (b_{1,i}^* \partial/\partial x_{1,i}|_{x_{1,i}=l_{1,t,i}^*}+b_{2,i}^* \partial/\partial x_{2,i}|_{x_{2,i}=l_{2,t,i}^*})\]
for $\lambda \in \mathbb{C}\setminus \{0\}$ and $\{(b_{1,i},b_{2,i} ),(b_{1,i}^*,b_{2,i}^* )\} \subset \mathbb{C}^2$ with $b_{j,i} b_{j,i}^* \neq 0$.

Let \[\widetilde{h}_{i,j} (l_t,t,P_{l_t,t} (b_{1,i} \partial/\partial x_{1,i}|_{x_{1,i}=l_{1,t,i}^*}+b_{2,i} \partial/\partial x_{2,i}|_{x_{2,i}=l_{2,t,i}^*}))=(t,b_{m,i}/b_{j,i})\]
for $m \neq j \in \{1,2\}$ and $(l_t,t,P_{l,t} (b_{1,i} \partial/\partial x_{1,i}|_{x_{1,i}=l_{1,t,i}^*}+b_{2,i} \partial/\partial x_{2,i}|_{x_{2,i}=l_{2,t,i}^*})) \in U_{i,j}^*$.

Now $\{(U_{i,j}^*, \widetilde{h}_{i,j})\}_{i \in I,j \in J}$ is an atlas on the total space $P(N_{M_1/\text{CP}^2 \times T})$. For any $(i,j)\in I \times J$, let 
\[f_N (l_t,t,d)=(l_t,t,P_{l_t,t} (d_1 \partial/\partial  x_{1,t,i} |_{ x_{1,t,i}=l_{1,t,i}^*}+d_2 \partial/\partial x_{2,t,i} |_{ x_{2,t,i}=l_{2,t,i}^*})) \in U_{i,j}^*\]
for $(l_t,t,d)\in U_{i,j} \cap \pi_{\widetilde{M}_1}^{-1} (M_1 )$ with $l_t \in U_i$, $t\in T$ and $d=(d_1,d_2 )$. Then $f_N:\pi_{\widetilde{M}_1}^{-1} (M_1 ) \rightarrow P(N_{M_1/\text{CP}^2 \times T})$ is a globally well-defined map and it is not hard to see that $f_N$ is biholomorphic. Therefore, $\pi_{\widetilde{M}_1}^{-1} (M_1)$ is holomorphically isomorphic to the projectivized normal bundle $P(N_{M_1/\text{CP}^2 \times T})$.

In conclusion, the blow-up of $\text{CP}^2 \times T$ along the submanifold $M_1$ constructed above is just the blow-up of $\text{CP}^2 \times T$ centered at $M_1$ defined in  \cite[Chapter VII-12.3]{Demailly:1997}. So Corollary~\ref{cor2.5} is proved.
\end{proof}

With the same proof as Corollary~\ref{cor2.5}, we can prove that $\widetilde{\mathcal{F}}$ is exactly the manifold generated through blowing up $\widetilde{M}_i$ centered at the submanifolds $M_{i+1}$ traced by the blow-up points corresponding to T for $i\in \{1,…,8\}$ one by one. Moreover, we give the following definition.

\begin{definition}
\label{def1}
If every two distinct fibers of the compactifiable deformation family are algebraically isomorphic to each other, then we say that the compactifiable deformation family is algebraically trivial. Simply, we call it a trivial deformation family. Otherwise, it can be called a nontrivial deformation family. 
\end{definition}

Now we can give a deformation family of open Calabi-Yau manifolds similar to the deformation family constructed in \cite[Section~3]{xu:2025}. Taking use of the notations from the Introduction Section, the following theorem is true.

\begin{theorem}
\label{thm2.6}
Choosing T properly, $(\widetilde{\mathcal{F}},\widetilde{\mathcal{S}},T,\widetilde{\pi},o,j)$ is a compactifiable deformation of the open Calabi-Yau manifold $S_0\setminus C_0$.
\end{theorem}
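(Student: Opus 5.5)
I will verify the axioms of a compactifiable deformation of an open Calabi--Yau manifold in the sense of \cite{Gasparim:2021}, reducing each one to a result already established. The required data are: a complex analytic family of compact manifolds $(\widetilde{\mathcal{F}},T,\widetilde{\pi})$; a closed analytic subfamily $\widetilde{\mathcal{S}}$ whose complement restricts fibrewise to the open manifolds; a base point $o$; and an identification $j$ of the central open fibre with $S_0\setminus C_0$. In addition, every fibre of $\widetilde{\mathcal{F}}\setminus\widetilde{\mathcal{S}}\to T$ should be an open Calabi--Yau manifold.

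\textbf{Step 1: the ambient families.} By Theorem~\ref{thm2.4}, $(\widetilde{\mathcal{F}},T,\widetilde{\pi})$ is a complex analytic family of $\text{CP}^2\#9\overline{\text{CP}}^2$'s. By the same theorem, $(\widetilde{\mathcal{S}},T,\widetilde{\pi}|_{\widetilde{\mathcal{S}}})$ is a family of elliptic curves embedded as a smooth hypersurface. Hence $\widetilde{\mathcal{F}}\setminus\widetilde{\mathcal{S}}$ is open in $\widetilde{\mathcal{F}}$. Moreover, $\widetilde{\pi}$ restricted to it is still a holomorphic submersion, since the Jacobian has rank $9$ everywhere. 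Its fibres are $\widetilde{\pi}^{-1}(t)\setminus\widetilde{\pi}|_{\widetilde{\mathcal{S}}}^{-1}(t)=S_t\setminus C_t$. Corollary~\ref{cor2.5} guarantees that the fibrewise blow-up is the standard blow-up along the centres $M_i$. Therefore $S_t$ is genuinely the blow-up of $\text{CP}^2$ at the nine points of $f_\tau(C_0(\tau))$ parametrized by $t$, and $C_t$ is the strict transform of the cubic.

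\textbf{Step 2: the central fibre and $j$.} At $t=o$ the nine points are exactly those defining $S_0$. So $j(x)=(x,o)$ is a biholomorphism $S_0\setminus C_0\to \widetilde{\pi}^{-1}(o)\setminus\widetilde{\mathcal{S}}$. Its inverse is the holomorphic projection, which is inherited from the product structure of $\text{CP}^2\times T$ away from the exceptional loci.

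\textbf{Step 3: Calabi--Yau property of every fibre.} This is where ``choosing $T$ properly'' enters, and it is the main obstacle. The ninth point is defined by $f_9(\widetilde t)=-\sum_{j\le 8}f_{\tau,j}(\widetilde p_j)$, so Lemma~\ref{lem2.2} makes each $S_t$ a rational elliptic surface with global sections. In that case $C_t\in|-K_{S_t}|$ is a smooth fibre of the elliptic fibration. Hein's theorem \cite{Hein:2012} then gives a complete Ricci-flat Kähler metric on $S_t\setminus C_t$, together with a nowhere-vanishing holomorphic volume form. The remaining issue is degenerations, and $U$ and the $W_\upsilon$ must be shrunk to exclude them. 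First, the nine points must stay distinct, or at least in a configuration where the blow-up construction of Theorem~\ref{thm2.4} still applies. Second, $f_\tau(C_0(\tau))$ must remain smooth. I would handle this by choosing $o$ with $q_1,\dots,q_8$ and $f_9(o)$ pairwise distinct modulo $\langle 1,\tau_0\rangle$. Distinctness is an open condition and the map $t\mapsto f_9(t)$ is holomorphic, so sufficiently small discs $U,W_1,\dots,W_8$ preserve it on all of $T$. With this choice the centres $M_1,\dots,M_9$ are disjoint sections, matching the one-by-one blow-up used above.

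Combining Steps 1--3 yields the axioms of \cite{Gasparim:2021}, and the theorem follows.
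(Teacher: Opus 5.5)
Your proposal is correct and follows essentially the same route as the paper. Theorem~\ref{thm2.4} supplies the proper holomorphic submersion $\widetilde{\pi}$ (hence flatness on $\widetilde{\mathcal{F}}\setminus\widetilde{\mathcal{S}}$, which you should state explicitly since it is one of the axioms) and the closed analytic subset $\widetilde{\mathcal{S}}$; $j$ is an open embedding onto $\widetilde{\pi}^{-1}(o)\setminus\widetilde{\mathcal{S}}$ inside the compact central fibre; and Hein's theorem \cite{Hein:2012} gives the complete Calabi--Yau metrics, with your appeal to Lemma~\ref{lem2.2} and your shrinking of $T$ to keep the centres $M_i$ disjoint making explicit what the paper leaves implicit in ``choosing $T$ properly''.
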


\begin{proof}
Firstly, $S_0\setminus C_0$ is compactifiable.

The proof is similar to the proof of \cite[Theorem~3.13]{xu:2025}. It is not so hard to see that $j(S_0\setminus C_0)=(\widetilde{\pi})^{-1}(o)\setminus \widetilde{S}$ with $(\widetilde{\pi})^{-1}(o)$ being a compact complex space and $(\widetilde{\pi})^{-1}(o)\cap \widetilde{S}$ being a closed analytic subset. Moreover, j is an open embedding. So $S_0\setminus C_0$ is a compactifiable complex space.

Secondly, $\widetilde{S}$ is a closed analytic subset of $\widetilde{\mathcal{F}}$ and $\widetilde{\pi}:\widetilde{\mathcal{F}} \rightarrow T$ is a proper holomorphic map with $\widetilde{\pi}|_{\widetilde{\mathcal{F}} \setminus \widetilde{\mathcal{S}}}$ being flat.

Here, the complex manifolds $\widetilde{\mathcal{F}}$ and T are complex spaces. Similar to the proof of \cite[Theorem~3.13]{xu:2025}, we can get an open neighborhood for every point in $\widetilde{\mathcal{F}} \setminus \widetilde{\mathcal{S}}$ easily. So $\widetilde{\mathcal{S}}$ is a closed analytic subset of $\widetilde{\mathcal{F}}$.

Moreover, for the complex analytic family $(\widetilde{\mathcal{F}},T,\widetilde{\pi})$, $\widetilde{\pi}:\widetilde{\mathcal{F}} \rightarrow T$ is a proper submersion. Then $\widetilde{\pi}|_{\widetilde{\mathcal{F}} \setminus \widetilde{\mathcal{S}}}$ is flat.

Now we can get the fact that $(\widetilde{\mathcal{F}},\widetilde{\mathcal{S}},T,\widetilde{\pi},o,j)$ is a compactifiable deformation of $S_0\setminus C_0$.

Finally, Hans-Joachim Hein constructed some complete Calabi-Yau metrics on the quasi-projective variety $S\setminus C$ in  \cite{Hein:2012}. Therefore, the fibers of the deformation family $(\widetilde{\mathcal{F}},\widetilde{\mathcal{S}},T,\widetilde{\pi},o,j)$ are all complete open Calabi-Yau manifolds.

In conclusion, $(\widetilde{\mathcal{F}},\widetilde{\mathcal{S}},T,\widetilde{\pi},o,j)$ is a compactifiable deformation of the open Calabi-Yau manifold $S_0\setminus C_0$. So Theorem~\ref{thm2.6} is proved.                          
\end{proof}

In addition, taking use of the same proof as \cite[Proposition~3.14]{xu:2025}, it is not hard to see that $(\widetilde{\mathcal{F}},\widetilde{\mathcal{S}},T,\widetilde{\pi},o,j)$ is a smooth compactifiable deformation differentially trivial along T. The following is a proposition for the non-triviality of the deformation.

\begin{corollary}
\label{cor2.7}
Choosing T properly, $(\widetilde{\mathcal{F}},\widetilde{\mathcal{S}},T,\widetilde{\pi},o,j)$ is a nontrivial deformation family. 
\end{corollary}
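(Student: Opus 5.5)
To show nontriviality in the sense of Definition~\ref{def1}, it suffices to exhibit two fibers of $(\widetilde{\mathcal{F}},\widetilde{\mathcal{S}},T,\widetilde{\pi},o,j)$ that are not algebraically isomorphic. I will use the boundary elliptic curve as an invariant. The plan is to show that any algebraic isomorphism $\Phi:S_{t}\setminus C_{t}\to S_{t'}\setminus C_{t'}$ between two fibers forces $C_0(\tau)\cong C_0(\tau')$, so that $j(\tau)=j(\tau')$. Since $U$ is a disc neighbourhood of $\tau_0$ and the $j$-function is a nonconstant holomorphic function on $Y$, I will then choose $U$ (hence $T$) so small that $j$ is nonconstant on $U$. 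This is the step ``choosing $T$ properly'': for instance, I would take $U$ to avoid the finitely many $SL_2(\mathbb{Z})$-translates of $\tau_0$ other than $\tau_0$ itself, so that $j|_U$ is injective away from a small set. Picking $\tau\neq\tau'$ in $U$ with $j(\tau)\neq j(\tau')$, the corresponding fibers over $(\tau,q_1,\dots,q_8)$ and $(\tau',q_1,\dots,q_8)$ are then non-isomorphic.

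The key step is recovering $C$ from $S\setminus C$. An algebraic isomorphism of the quasi-projective varieties $S_t\setminus C_t\to S_{t'}\setminus C_{t'}$ extends to a birational map $\overline{\Phi}:S_t\dashrightarrow S_{t'}$. I would argue this map is in fact biregular near the boundary, or at least that it carries $C_t$ birationally onto $C_{t'}$. For this I would use that $C_t\in|-K_{S_t}|$ is a smooth anticanonical elliptic curve, and that the pair $(S_t,C_t)$ is a minimal compactification: $C_t$ is irreducible with $C_t^2=0$, so no $(-1)$-curves lie in the boundary. By the standard theory of compactifications of a surface with a smooth irreducible boundary divisor of nonnegative self-intersection, resolving the indeterminacy of $\overline{\Phi}$ cannot contract $C_t$. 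Hence the strict transform of $C_t$ maps birationally onto $C_{t'}$. Since $C_t$ and $C_{t'}$ are smooth curves, a birational map between them is an isomorphism. This gives $C_0(\tau)\cong C_0(\tau')$.

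An alternative intrinsic route avoids the extension argument. The boundary at infinity of $S_t\setminus C_t$ is a neighbourhood of $C_t$ minus $C_t$, which is a punctured disc bundle over $C_t$. Its analytic structure (for example via the Hein metric asymptotics or the fundamental group at infinity) remembers $C_t$ up to isomorphism, since the normal bundle $N_{C_t}$ is a degree zero line bundle on $C_t$. I would keep this as a backup.

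The main obstacle is the extension step: justifying that an arbitrary algebraic isomorphism of the open surfaces induces an isomorphism of the boundary curves. I would first try to argue via log Kodaira dimension and the uniqueness of the minimal smooth SNC compactification with boundary $C$ (a smooth elliptic curve with $C^2=0$). The point to check is that elementary transformations at points of $C$ would produce a boundary curve with self-intersection $\neq 0$, or with extra components, and so cannot occur between two minimal models of this type. The remaining steps are routine, namely choosing $U$ so that $j|_U$ is nonconstant.
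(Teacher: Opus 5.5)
Your strategy is the same as the paper's: both distinguish the fibers over $(\tau,q_1,\dots,q_8)$ and $(\tau',q_1,\dots,q_8)$ by their boundary curves $C_t\cong C_0(\tau)$. The paper simply asserts that non-isomorphic boundaries rule out an algebraic isomorphism of the complements. So all the content lies in the step you flag as the main obstacle, and \emph{the mechanism you propose for that step is not the right one}.

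Minimality of $(S_t,C_t)$, $C_t^2=0$, the absence of $(-1)$-curves in the boundary, and the expectation that elementary transformations change the self-intersection or add components do not force $\overline{\Phi}$ to respect the boundary. For every Hirzebruch surface and fiber $F$ one has $\mathbb{F}_n\setminus F\cong\mathbb{C}\times\mathbb{P}^1$. In each case the boundary is smooth and irreducible with $F^2=0$, yet the $\mathbb{F}_n$ are pairwise non-isomorphic. An elementary transformation at a point of $F$ realizes this: it contracts the strict transform of the old boundary, and the new boundary again has self-intersection $0$. So there is no general uniqueness of such minimal compactifications to appeal to. Your backup route also fails as phrased. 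The link at infinity is a degree-zero circle bundle over a torus, i.e.\ a $3$-torus, so invariants such as the fundamental group at infinity do not see $\tau$.

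The missing idea is the \emph{genus} of the boundary.
\begin{itemize}
\item Resolve $\overline{\Phi}$ by blowing up only over $C_t$. This gives $p:Z\to S_t$ and a morphism $q=\overline{\Phi}\circ p:Z\to S_{t'}$.
\item Since $q=\Phi\circ p$ on $p^{-1}(S_t\setminus C_t)$ and $p=\Phi^{-1}\circ q$ on $q^{-1}(S_{t'}\setminus C_{t'})$, you get $p^{-1}(C_t)=q^{-1}(C_{t'})$.
\item A birational morphism of smooth projective surfaces is a composite of point blow-ups, so it contracts only smooth rational curves. Hence $q$ does not contract the genus-one strict transform $\widetilde{C}_t$.
\item Because $\widetilde{C}_t\subset q^{-1}(C_{t'})$, it maps onto $C_{t'}$. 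The map is birational, since $q$ is an isomorphism away from finitely many points. A birational morphism of smooth projective curves is an isomorphism.
\end{itemize}
Therefore $C_0(\tau)\cong C_t\cong\widetilde{C}_t\cong C_{t'}\cong C_0(\tau')$. In fact $\widetilde{C}_t$ is the only non-rational component of $p^{-1}(C_t)$, so $p$ and $q$ contract the same curves and $\Phi$ extends to an isomorphism $S_t\to S_{t'}$. This is exactly where the elliptic hypothesis enters, and why the Hirzebruch example does not apply.

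Your handling of the $j$-invariant is fine, and more careful than the paper's. You only need $j$ nonconstant on $U$. The paper's claim that any $\tau_1\neq\tau_2$ in $U$ give non-isomorphic curves fails when $\tau_0$ is an $SL_2(\mathbb{Z})$-translate of $\sqrt{-1}$ or $e^{2\pi\sqrt{-1}/3}$, where $j$ is not locally injective.
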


The proof of Corollary~\ref{cor2.7} is similar to the statement in \cite[Introduction]{xu:2025}. Choosing T properly, for any $\tau_1 \neq \tau_2 \in U$ and $(\tau_i,p_1,...,p_8) \in T$ with $i \in \{1,2\}$, the square-zero elliptic curves $\widetilde{\pi}|_{\widetilde{\mathcal{S}}}^{-1}(\tau_1,p_1,...,p_8)$ and $\widetilde{\pi}|_{\widetilde{\mathcal{S}}}^{-1}(\tau_2,p_1,...,p_8)$ are not isomorphic to each other. Then there is not any algebraic isomorphism between $\widetilde{\pi}^{-1}(\tau_1,p_1,...,p_8)\setminus \widetilde{\pi}|_{\widetilde{\mathcal{S}}}^{-1}(\tau_1,p_1,...,p_8)$ and $\widetilde{\pi}^{-1}(\tau_2,p_1,...,p_8)\setminus \widetilde{\pi}|_{\widetilde{\mathcal{S}}}^{-1}(\tau_2,p_1,...,p_8)$. So $(\widetilde{\mathcal{F}},\widetilde{\mathcal{S}},T,\widetilde{\pi},o,j)$ is a nontrivial deformation family of open Calabi-Yau manifolds.

Now combining Theorem~\ref{thm2.6} and Corollary~\ref{cor2.7}, the proof of Theorem~\ref{thm1.1} is completed.

\section{A deformation family admitting a holomorphic involution and non-Steinness}

\noindent
In this section, we give a condition for the existence of an elliptic fibration with global sections on the complex analytic family $(\widetilde{\mathcal{F}},T,\widetilde{\pi})$, construct a holomorphic involution on the  compactifiable deformation family and claim the non-Steinness for all the compactifiable deformation families constructed.

Firstly, we present a condition for the existence of an elliptic fibration with global sections on the complex analytic family $(\widetilde{\mathcal{F}},T,\widetilde{\pi})$ and construct a holomorphic involution on the  compactifiable deformation family.

Let $f_\tau$ be defined as the embedding map constructed from the Weierstrass elliptic function $\wp$ introduced in \cite[Introduction]{xu:2025} for $\tau \in Y$. Let $p=(p_1,…,p_9)$ and $p^*=(p_1^*,...,p_9^*)=(f_{\tau,1}(p_1),...,f_{\tau,8}(p_8),f_{\tau,9}(p_9))$ with $\sum_{i=1}^9p^*_i=0$.

Let $\{r_1,r_2,r_3 \}$ be a basis of $H^0 (C_0,\mathcal{O}(3f_{\tau}^{-1} (Q)))$ such that $\wp(x)=r_1 (x)/r_3 (x)$ and $\wp^\prime(x) = r_2 (x)/r_3 (x)$ for $0 \neq x \in C_0$. 

Here, $r_3=\theta^3$ with $\theta(x) \in H^0 (C_0,\mathcal{O}(f_{\tau}^{-1}(Q)))$ being the first Jacobi Theta function which is the Riemann-theta function of degree one with an order one zero at the zero point. Let 
\[h_p (x):=\prod_{i=1}^9 \theta(x-p_i^*) \in H^0 (C_0,\mathcal{O}(9f_{\tau}^{-1}(Q)) ).\]

Then $\{r_3^3,r_1 r_3^2,r_2 r_3^2,r_1^2 r_3,r_1 r_2 r_3,r_1^3,r_1^2 r_2,r_1 r_2^2,r_2^3 \}$ form a basis of $H^0 (C_0,\mathcal{O}(9f_{\tau}^{-1}(Q)))$ with $f_{\tau}$ mapping the elements in 
\[\{b_{\tau,1}=r_3^3,b_{\tau,2}=r_1 r_3^2,b_{\tau,3}=r_2 r_3^2,b_{\tau,4}=r_1^2 r_3,b_{\tau,5}=r_1 r_2 r_3,b_{\tau,6}=r_1^3,b_{\tau,7}=r_1^2 r_2,b_{\tau,8}=r_1 r_2^2,b_{\tau,9}=r_2^3\}\]
to the elements in $\{z^3,xz^2,yz^2,x^2 z,xyz,x^3,x^2 y,xy^2,y^3\}$ respectively \cite{Fujimoto:1990}.

Now there exist nine elements in the set $\{Q_{\tau,1},\cdots,Q_{\tau,9}\} \subset  \mathbb{C}^9$ such that
\begin{eqnarray}
h_p (x)=\sum_{i=1}^9 Q_{\tau,i} b_{\tau,i} (x). 
\end{eqnarray}

Substituting the nine points in $\{p_1^*,\cdots,p_8^*,0\}$ into the equation (3.1), we can get a equation system consisting of $\sum_{i=1}^9 Q_{\tau,i} b_{\tau,i}(p_j^*)=0$ for $j\in \{1,\cdots,8\}$ and $Q_{\tau,9}=\prod_{i=1}^9\theta(-p_j^*)$.

The coefficient matrix of the equation system is
\[D=\begin{pmatrix}
b_{\tau,1}(p_1^*) & b_{\tau,2}(p_1^*) & \cdots & b_{\tau,9}(p_1^*) \\
\cdots & \cdots & \cdots & \cdots \\
b_{\tau,1}(p_8^*) & b_{\tau,2}(p_8^*) & \cdots & b_{\tau,9}(p_8^*) \\
0 & 0 & \cdots & 1 \\
\end{pmatrix}.\]

Here, let $M^1=\{(p_1,\cdots,p_9)|\{p_1,\cdots,p_9 \} \in \mathbb{C}, rank(D) \geq 8 \ \& \ \sum_{i=1}^9 f_{\tau,i}(p_i)=0 \}$.

Let $M^2=\{(p_1,\cdots,p_9)|\{p_1,\cdots,p_9 \} \in \mathbb{C}, rank(D)=9 \ \& \ \sum_{i=1}^9 f_{\tau,i}(p_i)=0 \}$ and $M^3=\{(p_1,\cdots,p_9)|\{p_1,\cdots,p_9 \} \in \mathbb{C}, \forall \ i \in \{1,\cdots, 9 \}, f_{\tau,i}(p_i) \neq 0 \}$.

For $p\in M^2 \cap M^3 \subset M^1$, since $rank(D)=9$, we can get a nonzero solution $Q_{\tau,1},\cdots,Q_{\tau,9}$ of the equation system above.

Let $M^4=\{(p_1,\cdots,p_9)|\{ p_1,\cdots,p_9\} \in \mathbb{C}, rank(D)=8 \ \& \ \sum_{i=1}^9f_{\tau,i}(p_i)=0 \}$. For $p \in M^4 \setminus M^3 \subset M^1$, since $rank(D)=8$, then we can get a one-dimensional solution space. For any nonzero solution $Q_{\tau,1}=a_1^*,\cdots,Q_{\tau,9}=a_9^*$ of the equation system above, the value $[a_1^*:\cdots:a_9^*]\in \text{CP}^8$ is unique.

Then for $p\in (M^2 \cap M^3)\cup(M^4 \setminus M^3) \subset M^1$, we get a function $P_\tau:(M^2 \cap M^3)\cup(M^4 \setminus M^3) \rightarrow \text{CP}^8$ defined by $P_\tau(p)=[Q_{\tau,1}:\cdots:Q_{\tau,9}]\in \text{CP}^8$ with $Q_{\tau,1},\cdots,Q_{\tau,9}$ being a nonezero solution for the system of equations above.

Moreover, we can get a cubic curve in $\text{CP}^2$ defined as follows.
\begin{eqnarray}C_{\tau,p}:g_{\tau,p}=Q_{\tau,1}z^3+Q_{\tau,2}xz^2+Q_{\tau,3}yz^2+Q_{\tau,4}x^2z+Q_{\tau,5}xyz+Q_{\tau,6}x^3  \nonumber \\
+Q_{\tau,7}x^2y+Q_{\tau,8}xy^2+Q_{\tau,9}y^3=0 \nonumber
\end{eqnarray}
\noindent
with $C_{\tau,p}\cap f_\tau(C_0(\tau))=\{ f_\tau(p^*_1),\cdots,f_\tau(p^*_9)\}$.

Now let $W=\{(p_1,\cdots,p_9)|\{ p_1,\cdots,p_9\}\in \mathbb{C}, (p_1,\cdots,p_8)=W_1\times\cdots \times W_8 \ \& \ \sum_{i=1}^9f_{\tau,i}(p_i)=0 \}$. Then we have the following corollary.

\begin{corollary}
\label{cor3.1}
If T is chosen properly such that $P_\tau$ is a holomorphic function on $W \subset (M^2 \cap M^3)\cup(M^4 \setminus M^3)$, then there exists an elliptic fibration with global sections on the complex analytic family $(\widetilde{\mathcal{F}},T,\widetilde{\pi})$.
\end{corollary}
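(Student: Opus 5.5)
The plan is to build, fiber by fiber, the classical pencil of cubics through the nine blow-up points, and then use the holomorphy of $P_\tau$ to glue these pencils into a single holomorphic map on the total space. Fix $\widetilde{t}=(\tau,\widetilde{p}_1,\dots,\widetilde{p}_8)\in T$ and let $p\in W$ be the corresponding point, with $p_9$ fixed by $\sum_{i=1}^9 f_{\tau,i}(p_i)=0$. By the construction preceding the statement, the cubic $C_{\tau,p}:g_{\tau,p}=0$ meets $f_\tau(C_0(\tau))$ exactly in $\{f_\tau(p_1^*),\dots,f_\tau(p_9^*)\}$; since $h_p$ has simple zeros at the $p_i^*$ (counted with multiplicity when points coincide), the intersection multiplicities match the blow-up data. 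Let $f_{\tau,\infty}$ be the defining cubic (2.1). Then every member of the pencil $\lambda g_{\tau,p}+\mu f_{\tau,\infty}$ passes through the nine points, and on $\pi^{-1}(\widetilde{t})\cong\text{CP}^2$ the rational map
\[\Phi_{\widetilde{t}}=[g_{\tau,p}:f_{\tau,\infty}]:\text{CP}^2\dashrightarrow \text{CP}^1\]
has indeterminacy locus equal to the nine points. After the nine blow-ups it extends to a morphism $\widetilde{\pi}^{-1}(\widetilde{t})\to\text{CP}^1$ whose general fiber is a cubic, hence an elliptic curve. By Lemma~\ref{lem2.2}, since $\sum f_\tau^{-1}(p_i)=0$, the surface is rational elliptic, and the nine exceptional curves give global sections. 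The strict transform $C$ is the fiber over $[0:1]$.

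To globalize, I use the hypothesis that $P_\tau$ is holomorphic on $W$, with $\tau$ varying holomorphically in $U$. This lets me choose, after shrinking $T$ and normalizing one nonvanishing homogeneous coordinate, holomorphic functions $Q_{\tau,i}(\widetilde{t})$ on $T$ representing $P_\tau$. The normalization is possible locally, because $P_\tau$ lands in $\text{CP}^8$, and on a small polydisc $T$ a single coordinate stays nonzero. Then $G(x,\widetilde{t})=g_{\tau,p}(x)$ and $F(x,\widetilde{t})=f_{\tau,\infty}(x)$ (with coefficients $P_i(\tau)$ holomorphic) are holomorphic sections of $\mathcal{O}(3)$ on $\text{CP}^2\times T$. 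So $\Phi=[G:F]$ is a meromorphic map on $\text{CP}^2\times T$ whose indeterminacy locus is $M_1\cup\cdots\cup M_9$.

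Next I show that $\Phi$ lifts holomorphically to $\widetilde{\mathcal{F}}$. For this I work in the explicit charts $U_{i,j}$ of the proof of Theorem~\ref{thm2.4}, together with Corollary~\ref{cor2.5}. Near $M_k$, both $G$ and $F$ vanish to first order along $M_k$. Because $f_\tau(C_0(\tau))$ and $C_{\tau,p}$ meet transversally there (or with the correct multiplicity), their pulled-back defining functions each factor as $h_{i,j}$ times a function, and those two functions have no common zero on the exceptional divisor. This gives a holomorphic map $\widetilde{\Phi}:\widetilde{\mathcal{F}}\to\text{CP}^1$, and hence a holomorphic map $(\widetilde{\Phi},\widetilde{\pi}):\widetilde{\mathcal{F}}\to\text{CP}^1\times T$ that restricts on each fiber to the elliptic fibration above. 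The global sections are the exceptional divisors $\pi^{-1}(M_k)$. Each one maps isomorphically onto $\text{CP}^1\times T$, because on the fiber it is a $(-1)$-curve meeting each cubic of the pencil once.

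The main obstacle is the lifting step in the infinitely-near or tangential cases, where transversality at a blow-up point fails. It is also the step where one must check that the normalized $Q_{\tau,i}$ never all vanish and that $C_{\tau,p}\neq f_\tau(C_0(\tau))$. Otherwise the pencil degenerates, i.e.\ $g_{\tau,p}$ is proportional to $f_{\tau,\infty}$. That proportionality is excluded because $Q_{\tau,10}$, the coefficient of $zy^2$, is absent from $g_{\tau,p}$ while $P_{10}$ is present in $f_{\tau,\infty}$, or else because $h_p$ is not a multiple of the cubic. In the non-transverse case I would first try to shrink $W_1,\dots,W_8$ so that the nine points stay distinct, and handle the remaining multiplicity by a direct computation with $h_p$ in the charts $U_{i,j}$.
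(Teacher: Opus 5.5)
Your proposal is correct and follows the paper's proof. Both use the fiberwise pencil spanned by $g_{\tau,p}$ and $f_{\tau,\infty}$, glue the fiberwise maps into $\widetilde{\mathcal{F}}\to T\times\text{CP}^1$ using holomorphic dependence on the parameters via $P_\tau$, and take the exceptional divisors over $M_1,\dots,M_9$ as the global sections. Your version is more careful than the paper's: you use joint holomorphy in all of $\widetilde{t}$ rather than only in $\tau$, extend $[G:F]$ explicitly across the exceptional divisors via transversality at distinct base points, and check that the pencil is non-degenerate. The only slip is cosmetic: with $\Phi=[g_{\tau,p}:f_{\tau,\infty}]$, the strict transform $C$ is the fiber over $[1:0]$, not $[0:1]$.
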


\begin{proof}
Here, the family $t\cdot g_{\tau,p}+s \cdot f_{\tau,\infty}=0$, $[s:t] \in \text{CP}^1$, gives a pencil of cubics on the fiber $\widetilde{\pi}^{-1}(\tau,p_1,\cdots,p_8)$ of the complex analytic family $(\widetilde{\mathcal{F}},T, \widetilde{\mathcal{\pi}})$ for $(\tau,p_1,\cdots,p_8)\in T$. So we can get a unique elliptic fibration $f_{\tau,p_1,\cdots,p_8}$ on $\widetilde{\pi}^{-1}(\tau,p_1,\cdots,p_8)$ mapping the curve $t\cdot g_{\tau,p}+s \cdot f_{\tau,\infty}=0$ to the point $(\tau,p_1,\cdots,p_8,[s:t]) \in T \times \text{CP}^1$. Here, the elliptic fibration is unique up to a Möbius transformation on $\text{CP}^1$.

Now from the construction above, $t\cdot g_{\tau,p}+s \cdot f_{\tau,\infty}=0$ depends holomorphically on $\tau \in U$. So $f_{\tau,p_1,\cdots,p_8}$ depends holomorphically on $\tau \in U$. Let $f_{\widetilde{\mathcal{F}}}:\widetilde{\mathcal{F}} \rightarrow T \times \text{CP}^1$ be defined by $f_{\widetilde{\mathcal{F}}}|_{\widetilde{\pi}^{-1}(\tau,p_1,\cdots,p_8)}=f_{\tau,p_1,\cdots,p_8}$ for $(\tau,p_1,\cdots,p_8) \in T$. Then $f_{\widetilde{\mathcal{F}}}$ gives an elliptic fibration on $\widetilde{\mathcal{F}}$.

In addition, for $i\in \{1,...,9\}$, since the restriction of the projectivized normal bundle of $M_i$ on $\widetilde{\pi}^{-1}(\tau,p_1,\cdots,p_8)$ is an exceptional divisor, it gives a global holomorphic section of the elliptic fibration $f_{\tau,p_1,\cdots,p_8}$  for $(\tau,p_1,\cdots,p_8) \in T$. So the projectivized normal bundle of $M_i$ which is an exceptional divisor in $\widetilde{\mathcal{F}}$ corresponding to $M_i$ is a global holomorphic section of the elliptic fibration $f_{\widetilde{\mathcal{F}}}$. So Corollary~\ref{cor3.1} is proved.
\end{proof}

Moreover, we have a corollary as follows.

\begin{corollary}
\label{cor3.2}
For the deformation family constructed in Corollary~\ref{cor3.1}, if all the singular fibers are of Kodaira type $I_1$ and $C_{\tau,p}$ is a smooth elliptic curve for $p \in W$ and $\tau \in U$, then there exists a holomorphic involution on the compactifiable deformation family $(\widetilde{\mathcal{F}},\widetilde{\mathcal{S}},T,\widetilde{\pi},o,j)$ induced by the hypersurface involution on the rational elliptic surface with global sections.
\end{corollary}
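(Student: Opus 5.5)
We construct the involution fibrewise and then check that it glues holomorphically over $T$, using the elliptic fibration $f_{\widetilde{\mathcal{F}}}:\widetilde{\mathcal{F}}\rightarrow T\times \text{CP}^1$ of Corollary~\ref{cor3.1}.

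\textbf{Fibrewise involution.} Fix $(\tau,p_1,\cdots,p_8)\in T$. Use the zero section $\sigma$, the exceptional curve over $f_\tau(p_9^*)$, which is one of the global sections produced in Corollary~\ref{cor3.1}. Every smooth fibre of $f_{\tau,p_1,\cdots,p_8}$ is an elliptic curve with origin given by $\sigma$. On it we take the inversion $x\mapsto -x$ of its group law.

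\textbf{Singular fibres.} By hypothesis every singular fibre is of type $I_1$, a nodal rational curve. Its smooth locus is the group $\mathbb{C}^*$ with identity on $\sigma$, and inversion $z\mapsto z^{-1}$ extends across the node by fixing it. We thus get a set-theoretic involution $\iota$ of $\widetilde{\pi}^{-1}(\tau,p_1,\cdots,p_8)$ preserving each fibre.

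\textbf{Holomorphy of $\iota$ on one surface.} Away from the singular fibres this follows from the relative Weierstrass model of the fibration, where the fibre is $y^2=4x^3-g_2x-g_3$ with $g_2,g_3$ holomorphic in the base coordinate and $\iota$ is $y\mapsto -y$. Then $\iota$ extends across the finitely many $I_1$ fibres by Riemann extension or Hartogs. Equivalently, $\iota$ is the hypersurface involution $y\mapsto -y$ on the Weierstrass model of the rational elliptic surface. The Weierstrass model coincides with the surface itself here, because with only $I_1$ fibres no curves are contracted. This is why the $I_1$ hypothesis is needed.

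\textbf{Dependence on the parameters.} The pencil $t\cdot g_{\tau,p}+s\cdot f_{\tau,\infty}$ depends holomorphically on $(\tau,p_1,\cdots,p_8)$, since $P_\tau$ is holomorphic on $W$. Hence the Weierstrass coefficients $g_2,g_3$ depend holomorphically on all parameters. The zero section is the exceptional divisor over $M_9$, which varies holomorphically, so the involutions glue to a holomorphic involution $\widetilde{\iota}$ of $\widetilde{\mathcal{F}}$ with $\widetilde{\pi}\circ\widetilde{\iota}=\widetilde{\pi}$.

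\textbf{Preservation of $\widetilde{\mathcal{S}}$.} The fibre $\widetilde{\pi}|_{\widetilde{\mathcal{S}}}^{-1}(\tau,p_1,\cdots,p_8)$ is the fibre $[s:t]=[1:0]$ of the pencil, namely $f_{\tau,\infty}=0$. It is smooth and preserved by $\iota$, and on it $\iota$ restricts to $x\mapsto -x$ on $C_0(\tau)$. This is the lattice involution referred to in the Introduction. Hence $\widetilde{\iota}(\widetilde{\mathcal{S}})=\widetilde{\mathcal{S}}$, $\widetilde{\iota}$ restricts to $\widetilde{\mathcal{F}}\setminus\widetilde{\mathcal{S}}$, and it commutes with $j$ up to the involution of $S_0$. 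The smoothness of $C_{\tau,p}$ guarantees that the fibre $[0:1]$ is also smooth, so that $\iota$ acts on it by inversion as well.

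\textbf{Main obstacle.} The step I expect to be hardest is the uniform holomorphic extension across the singular fibres as the parameters vary. Singular fibres may move and collide; collisions would produce fibres of type $I_2$ or $II$. The hypothesis that all singular fibres are $I_1$ for every parameter in $T$ excludes this. So the discriminant locus is a smooth, unramified 12-sheeted cover of $T$. Locally the Weierstrass model is then a smooth hypersurface in a $\mathbb{P}(1,2,3)$-bundle, on which $y\mapsto -y$ is manifestly holomorphic, and I would verify the extension there.
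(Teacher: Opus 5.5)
Your proof is correct, but it handles the key step, holomorphy of the fibrewise inversion across the singular fibres uniformly in the parameters, by a different mechanism from the paper's.

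The paper argues through moduli. The inverse map is a global holomorphic involution of the universal curve over $\mathcal{M}_{1,1}$, and it extends to the universal stable curve over $\overline{\mathcal{M}}_{1,1}$. Since $I_1$ fibres are stable, $f_{inv}$ is holomorphic near them; near smooth fibres the paper simply calls holomorphy obvious. You instead identify $\widetilde{\mathcal{F}}$ with its relative Weierstrass model over $T\times\text{CP}^1$: no curves are contracted because every fibre is irreducible. On that model the involution is $y\mapsto -y$, which is manifestly holomorphic.

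What each route buys: the paper's argument is shorter, but it tacitly uses that the family is pulled back from the orbifold/stack $\overline{\mathcal{M}}_{1,1}$. Yours is more elementary and explicit. It also checks that $\widetilde{\mathcal{S}}$ is preserved, since it is a fibre of $f_{\widetilde{\mathcal{F}}}$, which the paper leaves implicit. It shows, too, that only irreducibility of the fibres is used, so the remark that ``this is why the $I_1$ hypothesis is needed'' overstates the hypothesis. For the same reason the unramified-cover discussion in your last paragraph is not needed once the Weierstrass model is identified with $\widetilde{\mathcal{F}}$.

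Two small corrections:
\begin{itemize}
\item ``Riemann extension or Hartogs'' does not by itself extend a map across a curve in a surface. Hartogs needs codimension two, and Riemann extension would need continuity first. Your Weierstrass-model argument makes this step unnecessary.
\item With zero section the exceptional curve over $f_\tau(p_9^*)$, the restriction of $\iota$ to $C$ is $x\mapsto 2p_9^*-x$ in $C_0(\tau)$-coordinates, not $x\mapsto -x$. This does not affect the preservation of $\widetilde{\mathcal{S}}$.
\end{itemize}
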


\begin{proof}
Now we can construct a map $f_{inv}$ on $\widetilde{\mathcal{F}}$ with $f_{inv}|_{\widetilde{\pi}^{-1}(\tau,p_1,\cdots,p_8)}$ being a hypersurface involution on $\widetilde{\pi}^{-1}(\tau,p_1,\cdots,p_8)$ mapping every element to its inverse for $(\tau,p_1,\cdots,p_8) \in T$. Here, the group structure on each fiber of $f_{\widetilde{\mathcal{F}}}$ can be identified through the defining equations constructed in the proof of Corollary~\ref{cor3.1}.

It is obvious that the hypersurface involution $f_{inv}$ is holomorphic in a sufficiently small neighborhood of a smooth fiber. 

Here, let $\mathcal{M}_{1,1}$ be the orbifold constructed in \cite[3.5]{Hain:2008}. There is a global holomorphic involution defined on the universal curve $\mathcal{E} \rightarrow \mathcal{M}_{1,1}$ induced by the inverse map on the elliptic curve. In addition, the involution can be extened to the universal stable elliptic curve $\overline{\mathcal{E}} \rightarrow \overline{\mathcal{M}}_{1,1}$ with $\overline{\mathcal{M}}_{1,1}$ being the compactification of $\mathcal{M}_{1,1}$. Now since every singular fiber is of Kodaira type $I_1$ which is stable, the hypersurface involution $f_{inv}$ is holomorphic in a sufficiently small neighborhood of a singular fiber. 

Then $f_{inv}$ is a holomorphic involution on $\widetilde{\mathcal{F}}$ and $f_{inv}|_{\widetilde{\mathcal{F}}\setminus \widetilde{\mathcal{S}}}$ is a holomorphic involution on the compactifiable deformation family $(\widetilde{\mathcal{F}},\widetilde{\mathcal{S}},T,\widetilde{\pi},o,j)$. Then Corollary~\ref{cor3.2} is proved.
\end{proof}

Here, there is a fact that a generic rational elliptic surface with global sections only has singular fibers of Kodaira type $I_1$. 

Secondly, we give some useful examples here.

Taking use of the result from \cite[Example~3.3]{Fujimoto:1990}, Let $det(D)=G_{\tau,9}$ and $Q_{\tau,j}=(-1)^j\prod_{i=1}^9\theta(p^*_j)\frac{G_{\tau,j}}{G_{\tau,9}}$ for $j\in \{ 1,\cdots,9\}$. Now assume that $G_{\tau,i} \in H^0(M^2,\mathcal{O}(L))$ for $i \in \{ 1,\cdots, 9\}$ with $L$ being a line bundle on $M^2$. Let $M^0=\{(p_1,...,p_9) \in M^2|G_{\tau,i} \ and \ G_{\tau,j}$ are relatively prime for $ i \neq j \in \{1,\cdots,9\}\}\cap M^3$. Then $M^0$ is a submanifold of the manifold constructed in  \cite[Example~3.3]{Fujimoto:1990}.

Taking use of the result from \cite[Example~3.3]{Fujimoto:1990}, for a sufficiently small T satisfying the condition that $W \subset M^0$, $Q_{\tau,j}$ is a holomorphic function for $i\in \{1,\cdots,9\}$ which indicates that $P_\tau$ is a holomorphic function on $W$. Taking use of Corollay~\ref{cor3.1}, we can get an elliptic fibration with global sections on the complex analytic family $(\widetilde{\mathcal{F}},T,\widetilde{\pi})$.

Now if there are only singular fibers of Kodaira type $I_1$ in the deformation family and $C_{\tau,p}$ is a smooth elliptic curve for $p \in W$ and $\tau \in U$, taking use of Corollay~\ref{cor3.2}, then the compactifiable deformation family $(\widetilde{\mathcal{F}},\widetilde{\mathcal{S}},T,\widetilde{\pi},o,j)$ admits a holomorphic involution induced by the hypersurface involution on rational elliptic surface with global sections. 

The following is to give an example for which the existence of the elliptic fibration with global sections on the complex analytic family and the analogous holomorphic involution map can not be derived directly.

Here, the center point of T plays a key role for the constructed above. Taking use of the notations above, we give some discussions on the situation that the center point $o=(\tau_0,q_1,\cdots,q_8)$ of T satisfies the condition that $q_i=-q_{10-i}$ for $i \in \{2,\cdots,5\}$ in the following.

Let $q=(q_1,\cdots,q_9)$ and $q^*=(q_1^*,\cdots,q_9^*)=(f_{\tau,1}(q_1),...,f_{\tau,9}(q_9))$ with $\sum_{i=1}^9q^*_i=0$.

Now there exist a set $\{R_{\tau,1},\cdots,R_{\tau,9}\} \subset  \mathbb{C}^9$ such that
\begin{eqnarray}
h_q (x)=\sum_{i=1}^9 R_{\tau,i} b_{\tau,i} (x). 
\end{eqnarray}

Substituting the nine points in $\{q_1^*,\cdots,q_9^*\}$ into the equation (3.2), we can get a equation system consisting of $\sum_{i=1}^9 R_{\tau,i} b_{\tau,i}(q_j^*)=0$ for $j\in \{1,\cdots,9\}$.

Here, the Weierstrass $\wp$-function satisfies the properties as follows:
\[\wp(x)=\wp(-x) \quad \& \quad \wp^\prime(-x)=-\wp^\prime(x)\]
for $x \in C_0 \setminus \{0\}$.

The first Jacobi Theta function satisfies a property as follows:
\[\theta(x)=-\theta(-x)\]
for $x \in C_0$.

Then $\forall x \in C_0\setminus \{0\}$, $b_{\tau,i} (x)=-b_{\tau,i} (-x)$ for $i\in \{1,2,4,6,8\}$ and $b_{\tau,i} (x)=b_{\tau,i}(-x)$ for $i\in \{3,5,7,9\}$. So through suitable change, we can get a equation system consisting of nine equations as follows:
\begin{eqnarray}
\begin{cases}
R_{\tau,3} b_{\tau,3} (q_1^*) +R_{\tau,5} b_{\tau,5} (q_1^*) +R_{\tau,7} b_{\tau,7} (q_1^*) =0 \\
R_{\tau,3} b_{\tau,3} (q_2^*)+R_{\tau,5} b_{\tau,5} (q_2^*)+R_{\tau,7} b_{\tau,7} (q_2^*)=0 \\
R_{\tau,3} b_{\tau,3} (q_3^*) +R_{\tau,5} b_{\tau,5} (q_3^*) +R_{\tau,7} b_{\tau,7} (q_3^*) =0 \\
R_{\tau,3} b_{\tau,3} (q_4^*)+R_{\tau,5} b_{\tau,5} (q_4^*)+R_{\tau,7} b_{\tau,7} (q_4^*)=0 \\
R_{\tau,9}=0 \\
R_{\tau,1} b_{\tau,1} (q_6^*)+R_{\tau,2} b_{\tau,2} (q_6^*)+R_{\tau,4} b_{\tau,4} (q_6^*)+R_{\tau,6} b_{\tau,6} (q_6^*)+R_{\tau,8} b_{\tau,8} (q_6^*)=0 \\
R_{\tau,1} b_{\tau,1} (q_7^*)+R_{\tau,2} b_{\tau,2} (q_7^*)+R_{\tau,4} b_{\tau,4} (q_7^*)+R_{\tau,6} b_{\tau,6} (q_7^*)+R_{\tau,8} b_{\tau,8} (q_7^*)=0\\
R_{\tau,1} b_{\tau,1} (q_8^*)+R_{\tau,2} b_{\tau,2} (q_8^*)+R_{\tau,4} b_{\tau,4} (q_8^*)+R_{\tau,6} b_{\tau,6} (q_8^*)+R_{\tau,8} b_{\tau,8} (q_8^*)=0 \\
R_{\tau,1} b_{\tau,1} (q_9^*)+R_{\tau,2} b_{\tau,2} (q_9^*)+R_{\tau,4} b_{\tau,4} (q_9^*)+R_{\tau,6} b_{\tau,6} (q_9^*)+R_{\tau,8} b_{\tau,8} (q_9^*)=0
\end{cases}.
\end{eqnarray}

The coefficient matrix is
\[D=\begin{pmatrix}
0&0&b_{\tau,3} (q_1^*)&0&b_{\tau,5} (q_1^*)&0&b_{\tau,7} (q_1^*)&0&0 \\
   0&0&b_{\tau,3} (q_2^*)&0&b_{\tau,5} (q_2^*)&0&b_{\tau,7} (q_2^*)&0&0 \\
   0&0&b_{\tau,3} (q_3^*)&0&b_{\tau,5} (q_3^*)&0&b_{\tau,7} (q_3^*)&0&0 \\
   0&0&b_{\tau,3} (q_4^*)&0&b_{\tau,5} (q_4^*)&0&b_{\tau,7} (q_4^*)&0&0 \\
  0&0&0&0&0&0&0&0&1  \\
  b_{\tau,1}(q_6^*)&b_{\tau,2}(q_6^*)&0&b_{\tau,4}(q_6^*)&0&b_{\tau,6}(q_6^*)&0&b_{\tau,8}(q_6^*)&0   \\
  b_{\tau,1} (q_7^*)&b_{\tau,2} (q_7^*)&0&b_{\tau,4} (q_7^*)&0&b_{\tau,6} (q_7^*)&0&b_{\tau,8} (q_7^*)&0  \\
  b_{\tau,1} (q_8^*)&b_{\tau,2} (q_8^*)&0&b_{\tau,4} (q_8^*)&0&b_{\tau,6} (q_8^*)&0&b_{\tau,8} (q_8^*)&0  \\
  b_{\tau,1} (q_9^*)&b_{\tau,2} (q_9^*)&0&b_{\tau,4} (q_9^*)&0&b_{\tau,6} (q_9^*)&0&b_{\tau,8} (q_9^*)&0  \\
\end{pmatrix}.\]
\noindent
Here, it is obvious that rank(D) $\leq$ 8. Let
\[M^5=\{(q_1,\cdots,q_9 )\in \mathbb{C}^9 | \{ q_1,\cdots,q_9\}\in \mathbb{C}, \ \forall \ i\in \{1,\cdots,5\}, q_i=-q_{10-i} \ \& \ rank(D)=8\}.\]

For $(q_1,\cdots,q_9 )\in M^5$, since $rank(D)=8$, the solutions for the equation system (3.3) form a one-dimensional solution space. Similar to the proof above, we can get the function $P_\tau:M^5 \rightarrow \text{CP}^8$ defined by $P_\tau (q)=[R_{\tau,1}:\cdots:R_{\tau,9}] \in \text{CP}^8$ with $R_{\tau,1},\cdots,R_{\tau,9}$ being a nonzero solution of (3.3) for $q=(q_1,\cdots,q_9)\in M^5 \subset (M^2 \cap M^3)\cup(M^4 \setminus M^3)$.

Now let 
\begin{eqnarray}
B=\{[R_{\tau,1}:\cdots:R_{\tau,9}]\in \text{CP}^8 | \forall i\in \{1,8\},R_{\tau,i} \neq 0 \quad \& \nonumber \\
4R_{\tau,6} R_{\tau,2}^3 - R_{\tau,2}^2 R_{\tau,4}^2 + 4 R_{\tau,4}^3 R_{\tau,1} + 27 R_{\tau,6}^2 R_{\tau,1}^2 - 18 R_{\tau,2} R_{\tau,4} R_{\tau,1} R_{\tau,6} \neq 0\} \nonumber.
\end{eqnarray}

Let $M^6=\{q=(q_1,\cdots,q_9)\in M^5|P_\tau(q) \in B\}$.

Let $M^*=M^5\cap M^6$ and $E_0=M^* \times \mathbb{C}^9$ be the trivial complex vector bundle over $M^*$. Assume that $P_\tau \in H^0(M^*, \mathcal{O}(P(E_0)))$. So $P_\tau(q)=[R_{\tau,1}:\cdots:R_{\tau,9}] \in CP^8$ for $q \in M^*$.

Now we get a cubic curve embedded in $\text{CP}^2$ defined by
\begin{eqnarray}
C_{\tau,q}:F_{\tau,q}=R_{\tau,1} z^3+R_{\tau,2} xz^2+R_{\tau,4} x^2 z+R_{\tau,6} x^3+R_{\tau,8} xy^2=0.
\end{eqnarray}

For any $q\in M^*$, $P_\tau(q)=[R_{\tau,1}:\cdots:R_{\tau,9}]\in B$. The discriminant for the general homogeneous Weierstrass formula of the elliptic curve corresponding to equation (3.4) is 
\[\Delta=4R_{\tau,6} R_{\tau,2}^3 - R_{\tau,2}^2 R_{\tau,4}^2 + 4 R_{\tau,4}^3 R_{\tau,1} + 27 R_{\tau,6}^2 R_{\tau,1}^2 - 18 R_{\tau,2} R_{\tau,4} R_{\tau,1} R_{\tau,6} \neq 0.\] 
So equation (3.4) defines a smooth elliptic curve which is a submanifold of $\text{CP}^2$. In addition, from equation (3.2) and the equation system (3.3), the nine points in the set $\{f_{\tau} (f_{\tau,1} (q_1 )),\cdots,f_{\tau } (f_{\tau,9} (q_9 ))\}$ are on the smooth elliptic curve defined by the equation (3.4).

That is to say, for any $q\in M^*$ with $P_\tau(q)=[R_{\tau,1}:\cdots:R_{\tau,9}]\in \text{CP}^8$ and we can get a smooth elliptic curve $C_{\tau,q}\subset \text{CP}^2$ defined by equation (3.4). In addition, $C_{\tau,q}$ intersects $f_{\tau }(C_0(\tau))$ at the nine points in the set $\{f_{\tau} (f_{\tau,1} (q_1 )),\cdots,f_{\tau} (f_{\tau,9} (q_9 ))\}$.

Let $S_2=\{(\tau,q_1,\cdots,q_8) \in T| \forall \ i \in\{1,\cdots,5\}, q_i=-q_{10-i} \}$. Then $t \cdot F_{\tau,q} +s\cdot f_{\tau,\infty}=0$ gives a pencil of cubics on the fiber $\widetilde{\pi}^{-1}(\tau,q_1,\cdots,q_8)$ of the complex analytic family $(\widetilde{\mathcal{F}},T,\widetilde{\pi})$ for $(\tau,q_1,\cdots,q_8) \in S_2$. Now we can get an elliptic fibration $f_{\widetilde{\pi}^{-1} (S_2)}$ on $\widetilde{\pi}^{-1} (S_2)$. In addition, the restriction of the projectivized normal bundle of $M_i$ on $\widetilde{\pi}^{-1} (S_2)$ is a global holomorphic section of $f_{\widetilde{\pi}^{-1} (S_2)}$ for $i \in \{1,...,9\}$. 

Now if there are only singular fibers of Kodaira type $I_1$ and $C_{\tau,q}$ is a smooth elliptic curve for $(\tau,q_1,\cdots,q_8) \in S_2$, taking use of Corollary~\ref{cor3.2}, we have a holomorphic involution on $\widetilde{\pi}^{-1}(S_2)$ induced by the hypersurface involution on the rational elliptic surface with global sections.

However, for the other fibers, the equation system to give the analogous cubic curve of $F_{\tau,q}=0$ is different from (3.3). So we can not derive the analogous elliptic fibration and the holomorphic involution on the whole manifold $\widetilde{\mathcal{F}}$ directly. 

Finally, we claim the non-Steinness for all the compactifiable deformation families constructed in Section 2 and Section 3.

Now for $\tau \in U$, let $\pi_S: S \rightarrow \text{CP}^1$ be the unique elliptic fibration of $S \cong \text{CP}^2 \text{$\#$9} \overline{\text{CP}}^2$ induced by the pencil of cubics with the base points being the nine blow-up points. In addition, almost every fiber of the elliptic fibration $\pi_S$ is a smooth elliptic curve. So there exists $[s:t] \neq [1:0] \in \text{CP}^1$ such that $\pi_S^{-1} ([s:t]) \subset S\setminus C$ is a smooth elliptic curve. Then $S \setminus C$ is obviously not Stein. That is to say, all the fibers of the compactifiable deformation family $(\widetilde{\mathcal{F}},\widetilde{\mathcal{S}},T,\widetilde{\pi},o,j)$ are not Stein. Then $\widetilde{\mathcal{F}}\setminus \widetilde{\mathcal{S}}$ is not Stein.

Now the proofs of Corollary~\ref{cor1.2} and Corollary~\ref{cor1.3} have been completed through Corollary~\ref{cor3.1} and Corollary~\ref{cor3.2} respectively.

\section{Further discussions on Steinness}

\noindent
In this section, we confirm the non-Steinness of $S\setminus C$ in almost every case corresponding to the conjecture proposed by Marco Brunella in \cite{Marco:2010}, discuss the remaining problem and construct deformation families in the sense of \cite{Gasparim:2021} whose fibers were conjectured to be Stein by Andreas Höring and Thomas Peternel in \cite{Höring:2024}. Here, the nine blow-up points are allowed to be infinitely near. Moreover, taking use of the algebraic isomorphism derived from Lemma~\ref{lem2.1}, $f_\tau$ can be an arbitrary holomorphic embedding map.

\subsection{ The non-Steinness of the quasi-projective varieties $S\setminus C$ }

\noindent
Now let $\alpha$ and $\beta$ be smooth loops on the elliptic curve C corresponding to the line segments $[0,1]$ and $[0,\tau]$ on $\mathbb{C}/ \langle  1, \tau \rangle$. Let $e^{p\cdot 2\pi \sqrt{-1}}$ and $e^{q \cdot 2\pi \sqrt{-1}}$ be the monodromies of $N_{C/S}$ along the loops $\alpha$ and $\beta$ respectively with $(p,q) \in \mathbb{R}^2$. 

Firstly, if the nine blow-up points (may be infinitely near) satisfy the condition that $\sum_{i=1}^9 f_{\tau}^{-1} (p_i)=0$  mod  $\langle 1, \tau \rangle$, it is not hard to check that the statement for the non-Steinness of the quasi-projective $S \setminus C$ at the end of Section 3 is still right. So we do not have to give any more proofs here.

Secondly, we prove that if the nine blow-up points satisfy the condition that $\sum_{i=1}^9 f_{\tau}^{-1} (p_i)=p+q\tau$ mod $\langle 1, \tau \rangle$ with $(p,q)$ being a Diophantine number pair (see \cite[Definition~2.2]{xu:2025}), then the quasi-projective variety $S \setminus C$ is not Stein.

Let H be the hyperplane line bundle on $\text{CP}^2$. Taking use of the adjunction formula I in \cite[Chapter~1-1]{Phillip and Joseph:1994} and the proper transform for blowing up \cite[Chapter~4-1]{Phillip and Joseph:1994}, $\mathcal{O}_{f_\tau(C_0(\tau))} (3H-\sum_{i=1}^9 p_i ) \cong N_{C/S}$. Then the monodromies of the normal bundle $N_{C/S}$ along the loops $\alpha$ and $\beta$ are $e^{p\cdot 2\pi \sqrt{-1}}$ and $e^{q \cdot 2\pi \sqrt{-1}}$ respectively \cite[2.1]{Koike and Uehara:2022}. So the normal bundle satisfies the Diophantine condition.

Here, $N_{C/S}$ is topologically trivial and flat \cite{Ueda:1982}. Then we have the following lemma.

\begin{lemma}
\label{lem4.1}
If the nine blow-up points satisfy the condition that $\sum_{i=1}^9 f_{\tau}^{-1} (p_i)=p+q\tau$ mod $\langle 1, \tau \rangle$ with $(p,q)$ being a Diophantine number pair, then the quasi-projective variety $S \setminus C$ is not Stein.
\end{lemma}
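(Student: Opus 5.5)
The plan is to use the Diophantine condition on $N_{C/S}$ to obtain, via Ueda theory and \cite[Theorem~1.7]{Koike and Uehara:2010}, a nonconstant plurisubharmonic function near $C$ whose level sets are compact Levi-flat hypersurfaces. Such a hypersurface contradicts Steinness of $S\setminus C$ through the maximum principle. The text just before the lemma already shows that $N_{C/S}\cong \mathcal{O}_{f_\tau(C_0(\tau))}(3H-\sum_{i=1}^9 p_i)$ is topologically trivial and flat, with unitary monodromies $e^{2\pi\sqrt{-1}p}$ and $e^{2\pi\sqrt{-1}q}$ along $\alpha$ and $\beta$, and that $(p,q)$ is a Diophantine pair. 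So $N_{C/S}$ satisfies the Diophantine condition of \cite[Definition~2.2]{xu:2025}.

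\textbf{Step 1 (linearization near $C$).} I would apply \cite[Theorem~1.7]{Koike and Uehara:2010}, which is Arnol'd-type linearization for elliptic curves with Diophantine normal bundle (in the rational surface setting). It gives a neighborhood $V$ of $C$ in $S$ biholomorphic to a neighborhood of the zero section in the total space of $N_{C/S}$. This step needs $C$ to be a smooth elliptic curve with $C^2=0$ and $N_{C/S}$ Diophantine, and these are exactly the hypotheses at hand, including when the blow-up points are infinitely near, since $C$ is still the strict transform of the smooth cubic.

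\textbf{Step 2 (pluriharmonic function).} Because $N_{C/S}$ is unitary flat, it carries a flat Hermitian metric $h$. The function $\Phi=-\log|\xi|_h^2$ on the punctured neighborhood $V\setminus C$ is then pluriharmonic. Its level sets $\{\Phi=c\}$ for $c\gg 0$ are compact real hypersurfaces in $V\setminus C\subset S\setminus C$, and they are Levi-flat: each is foliated by holomorphic leaves, namely the images of the flat unitary circle-bundle leaves. Because the monodromy is irrational, these leaves are dense in the level set.

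\textbf{Step 3 (contradiction).} Suppose $S\setminus C$ is Stein, so it carries a strictly plurisubharmonic exhaustion $\psi$. Fix a compact level set $L=\{\Phi=c\}$ and let $x_0\in L$ be a point where $\psi|_L$ attains its maximum. Take the holomorphic leaf $\ell$ through $x_0$: it is a complex curve (an immersed copy of $\mathbb{C}$ or $\mathbb{C}^*$) contained in $L$. Then $\psi|_\ell$ is strictly subharmonic and attains an interior maximum at $x_0$, which contradicts the maximum principle. A simpler variant avoids the exhaustion: by Stein theory the holomorphic functions on $S\setminus C$ separate points of $L$, yet any holomorphic $g$ has $|g|$ attaining its maximum on $L$ at some point, hence is constant along the dense leaf there, hence constant on $L$.

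The main obstacle is Step 1, namely checking that the hypotheses of \cite[Theorem~1.7]{Koike and Uehara:2010} are matched precisely, in particular that their Diophantine notion agrees with \cite[Definition~2.2]{xu:2025} under the identification of the monodromy in \cite[2.1]{Koike and Uehara:2022}. Once linearization holds, Steps 2 and 3 are standard.
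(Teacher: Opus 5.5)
Your proposal is correct and follows essentially the paper's route: the Diophantine condition on $N_{C/S}$, fed into Koike--Uehara's Theorem~1.7, produces compact Levi-flat hypersurfaces in $S\setminus C$ near $C$ (boundaries of a pseudoflat neighborhood) foliated by dense complex leaves $\mathbb{C}$ or $\mathbb{C}^*$, and these obstruct Steinness. Your Step~3 spells out the maximum-principle argument that the paper compresses into its final sentence ``So $S\setminus C$ is not Stein''.
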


\begin{proof}
From above, the normal bundle $N_{C/S}$ satisfies the Diophantine condition. Taking use of \cite[Theorem~1.7]{Koike and Uehara:2010}, there exists a pseudoflat neighborhood of the elliptic curve C of S. The boundary of the pseudoflat neighborhood is a 3-torus admitting a Levi-flat foliation with the dense leaf being either $\mathbb{C}$ or $\mathbb{C}^*$ \cite[Theorem~1.1]{Koike and Uehara:2010}. So $S \setminus C$ is not Stein. Then Lemma~\ref{lem4.1} is proved. 
\end{proof}                                                                         

Thirdly, it is to prove that if the nine blow-up points satisfy the condition that $\sum_{i=1}^9 f_{\tau}^{-1} (p_i)$ is of order $m$ with $m \geq 2$, then the quasi-projective variety $S \setminus C$ is not Stein.

Taking use of the logarithmic transformation proposed in  \cite{Kodaira:1963} by Kodaira, we can easily get the following lemma.

\begin{lemma}
\label{lem4.2}
If $\sum_{i=1}^9 f_{\tau}^{-1} (p_i)$ is of order $m \geq 2$, the quasi-projective variety $S \setminus C$ is not Stein.
\end{lemma}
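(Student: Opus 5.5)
The plan is to show directly that, when $\sum_{i=1}^9 f_\tau^{-1}(p_i)$ is a torsion point of order $m\geq 2$, the surface $S$ carries a pencil $|mC|$. This makes $S$ a Halphen surface of index $m$, i.e. the elliptic surface obtained from a Jacobian rational elliptic surface by a logarithmic transformation of order $m$ in the sense of \cite{Kodaira:1963}. We will then exhibit a compact curve inside $S\setminus C$; since a Stein manifold contains no compact analytic subsets of positive dimension, this finishes the proof. This parallels the argument at the end of Section 3 for the case $\sum_{i=1}^9 f_\tau^{-1}(p_i)=0$, where the fibers of $\pi_S$ played the role of the compact curves. Here the pencil is $|mC|$ rather than $|C|$.

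\textbf{Step 1 (torsion normal bundle).} As in the discussion preceding Lemma~\ref{lem4.1}, $N_{C/S}\cong \mathcal{O}_{f_\tau(C_0(\tau))}(3H-\sum_{i=1}^9 p_i)$. This remains valid for infinitely near points, working with strict transforms. Hence $N_{C/S}=\mathcal{O}_C(C)$ is a degree zero line bundle whose class in $\mathrm{Pic}^0(C)\cong C$ corresponds to $\sum_{i=1}^9 f_\tau^{-1}(p_i)$. So $\mathcal{O}_C(kC)$ is nontrivial for $1\leq k\leq m-1$, and $\mathcal{O}_C(mC)\cong\mathcal{O}_C$.

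\textbf{Step 2 (counting sections).} Use the exact sequences
\[0\rightarrow \mathcal{O}_S((k-1)C)\rightarrow \mathcal{O}_S(kC)\rightarrow \mathcal{O}_C(kC)\rightarrow 0 .\]
Since $S$ is rational, $h^0(\mathcal{O}_S)=1$ and $h^1(\mathcal{O}_S)=0$. For $1\leq k\leq m-1$ the bundle $\mathcal{O}_C(kC)$ is a nontrivial degree zero bundle on an elliptic curve, so $h^0=h^1=0$. Inducting on $k$ gives $h^0(\mathcal{O}_S(kC))=1$ and $h^1(\mathcal{O}_S(kC))=0$ for $0\leq k\leq m-1$. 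At $k=m$ we have $h^0(\mathcal{O}_C(mC))=1$, and the section lifts because $h^1(\mathcal{O}_S((m-1)C))=0$. Therefore $h^0(\mathcal{O}_S(mC))=2$, and $|mC|$ is a pencil.

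\textbf{Step 3 (a compact curve in the complement).} Take $D\in|mC|$ with $D\neq mC$. If $D$ contained $C$, then $D-C$ would lie in $|(m-1)C|$, which by Step 2 consists only of $(m-1)C$; this would force $D=mC$, a contradiction. So $D$ and $C$ share no component. Since $D\cdot C=mC^2=0$ and $C$ is irreducible, $D\cap C=\emptyset$. Hence $D$ is a compact curve contained in $S\setminus C$, and $S\setminus C$ is not Stein. Equivalently, the pencil gives an elliptic fibration $S\rightarrow \text{CP}^1$ with $mC$ as a multiple fiber of multiplicity $m$, matching Kodaira's logarithmic transformation picture; all its other fibers lie in $S\setminus C$.

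\textbf{Main obstacle.} The delicate point is Step 1 when some blow-up points are infinitely near. There one has to check that the identification $N_{C/S}\cong\mathcal{O}_C(3H-\sum p_i)$ still holds, with the $p_i$ counted via the strict transform, so that the order of $N_{C/S}$ in $\mathrm{Pic}^0(C)$ really equals $m$. Once that is in place, Steps 2 and 3 are purely cohomological and do not depend on the configuration of the points.
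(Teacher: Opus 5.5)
Your proof is correct, but it takes a different and more elementary route than the paper's.

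The paper does not compute $h^0(\mathcal{O}_S(mC))$. Instead it does three things:
\begin{itemize}
\item It cites \cite[Main Theorem~2.1-(2)]{Fujimoto:1990} for the anti-pluricanonical elliptic fibration $\Phi_{|-mK_S|}:S\rightarrow \text{CP}^1$ with a fiber of multiplicity $m$.
\item It reverses Kodaira's logarithmic transformation. This uses an unbranched $m$-sheeted cover of a neighbourhood of the multiple fiber, quotiented by a fiberwise automorphism built from an $m$-section. The result is a rational elliptic surface $\widetilde{S}$ with global sections and an isomorphism $\widetilde{S}\setminus(\varphi^m)^{-1}([1:0])\cong S\setminus C$.
\item It imports non-Steinness from the case $\sum_{i=1}^9 f_\tau^{-1}(p_i)=0$ treated at the end of Section 3.
\end{itemize}

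Since $-K_S\sim C$, your pencil $|mC|$ is exactly the paper's $|-mK_S|$. Your Steps 1--2 are therefore a self-contained proof of the part of Fujimoto's theorem that is actually used. Step 3 then reads off the compact curve directly.

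Your argument also shows that the detour through $\widetilde{S}$ is logically unnecessary for this lemma. Once $\Phi_{|-mK_S|}$ exists, its non-multiple fibers already lie in $S\setminus C$.

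What the paper's route buys is the structural picture of $S$ as a logarithmic transform of a Jacobian rational elliptic surface, which links this case to the families of Sections 2 and 3. However, the consequence later used in Proposition~\ref{pro4.4}, namely $\mathcal{O}_S(mC)\cong\Phi^{*}\mathcal{O}_{\text{CP}^1}(1)$, also falls out of your pencil. That pencil is base-point free because $mC$ and $D$ are disjoint.

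Your stated obstacle is not really one. Infinitely near points cause no trouble, since $C\sim 3H-\sum_i E_i$ with $E_i$ the total transforms, and $E_i|_C$ is the point $p_i$ of the strict transform. The input Step 1 genuinely relies on is that the origin of $C$ is an inflection point (Lemma~\ref{lem2.2}). This gives $\mathcal{O}_C(H)\cong\mathcal{O}_C(3\cdot 0)$, so the class of $N_{C/S}$ is $-\sum_{i=1}^9 f_\tau^{-1}(p_i)$, which has the same order $m$.
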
 

\begin{proof}
Taking use of \cite[Main Theorem~2.1-(2)]{Fujimoto:1990} and \cite[Proposition~1.1-(2)]{Fujimoto:1990}, if $\sum_{i=1}^9 f_{\tau}^{-1} (p_i)$ is of order $m$, then we can get an elliptic fibration $\Phi_{|-mK_s|}:S \rightarrow \text{CP}^1$ which is the anti-pluricanonical map and $S$ has one fiber of multiplicity $m$. In addition, $S$ can be derived from a rational elliptic surface $\widetilde{S}$ with global sections through the logarithmic transformation proposed in \cite{Kodaira:1963}.

Here, taking use of the method mentioned in \cite[Chapter 4-5]{Phillip and Joseph:1994}, we can construct the rational elliptic surface $\widetilde{S}$ above.

Let  $\omega_0$  be a local coordinate in a neighborhood $U_{[1:0]}$ of $[1:0]$ in $\text{CP}^1$. Let $\{ U_k\}_{k \in K}$ be an open cover of the submanifold $\Phi_{|-mK_s|}^{-1}(U_{[1:0]})$ of $S$ with $K$ being an index set and $U_k$ being suitably small open polydiscs. Then we can choose a holomorphic function $g_k$ on $U_k$ such that $g_k^m=\Phi_{|-mK_s|}^*(\omega_0)$ and $g_\alpha=e^{2\pi ik_{\alpha \beta}/m} \circ g_\beta$ for $k_{\alpha \beta} \in \{0,1,\cdots,m-1  \}$. 

Now the set $\Sigma$ collecting $(p,g_k)$ for $p \in U_k$ and $k \in K$ gives an unbranched m-sheeted cover of $\Phi_{|-mK_s|}^{-1}(U_{[1:0]})$ \cite[Chapter 4-5]{Phillip and Joseph:1994}. Let $\varphi$ be defined as $\varphi(p,g_k)= g_k(p)$ for $(p,g_k) \in \Sigma$ with $p \in U_k$ and $k\in K$. Then $\varphi$ gives an elliptic fibration on $\Sigma$.

Now we choose an m-section $e$ of the elliptic fibration $\Phi_{|-mK_s|}$. One typical example is the exceptional divisor. Moreover, let $\gamma$ denote the restriction of $e$ on $\Phi_{|-mK_s|}^{-1}(U_{[1:0]})$ . Here, $\gamma$ is an arc. Let $\widetilde{\gamma}$ denote a component of $\varphi^{-1}(\Phi_{|-mK_s|}(\gamma))$. Define an automorphism $f_{aut}$ on $\Sigma$ by
\[f_{aut} (p,g_\alpha )=(\widetilde{\gamma} \cdot g_\alpha^{-1}  (e^{2\pi i/m} \cdot g_\alpha (p)),e^{2 \pi i/m} \cdot g_\alpha)\]

\noindent
for $(p,g_\alpha )\in \Sigma$ with $p=\widetilde{\gamma}\cdot g_\alpha^{-1}  (g_\alpha (p))  \in U_{\alpha}$ and $\alpha \in K$. Here, $\widetilde{\gamma}\cdot g_\alpha^{-1}  (g_\alpha (p))$ denotes the intersection point of $\widetilde{\gamma}$ and $g_\alpha^{-1}  (g_\alpha (p))$. 

Now $\varphi^m$ gives an elliptic fibration on $\Sigma_0=\Sigma/\{\varphi^i\}$. Through gluing $\Sigma_0$ and $S\setminus \Phi_{|-mK_s|}^{-1} ([1:0])$, we can get a new surface $\widetilde{S}$. $\widetilde{S}$ is a rational elliptic surface with global sections. The inverse operation of the process above is the logarithmic transformation proposed in  \cite{Kodaira:1963}.

Here, $\widetilde{S}\setminus (\varphi^m)^{-1} ([1:0])$ is holomorphically isomorphic to $S \setminus \Phi_{|-mK_s|}^{-1}([1:0])$. Taking use of the proof above, $\widetilde{S}\setminus(\varphi^m)^{-1} ([1:0])$ is not Stein. Then $S \setminus \Phi_{|-mK_s |}^{-1} ([1:0])$ is not Stein. So Lemma~\ref{lem4.2} is proved.   
\end{proof}         

Here, $S$ can be generated by a Halphen pencil of index $m$ \cite[Section~3.1]{Zanardini:2008}. 

Finally, collecting the results above, we can get the following proposition.

\begin{proposition}
\label{pro4.3}
If $\sum_{i=1}^9 f_\tau^{-1} (p_i)=p+q \tau$ mod $\langle 1, \tau \rangle$ with $p+q\tau=0$, of order $m \geq 2$ or $(p,q)$ being a Diophantine number pair, the quasi-projective variety $S\setminus C$ is not Stein.
\end{proposition}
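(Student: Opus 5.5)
The plan is to treat Proposition~\ref{pro4.3} as a collection of the three cases already handled, splitting according to the value of $\sum_{i=1}^9 f_\tau^{-1}(p_i)=p+q\tau$ mod $\langle 1,\tau\rangle$. First, by Lemma~\ref{lem2.1}, replace the arbitrary holomorphic embedding $f_\tau$ by the Weierstrass embedding. An algebraic isomorphism of $\text{CP}^2$ carrying one image curve to the other preserves the group law once the identity (the inflexion point $Q_\tau$) is matched. Hence the condition on $\sum f_\tau^{-1}(p_i)$, and the isomorphism class of $S\setminus C$, do not depend on the choice of $f_\tau$.

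\emph{Case $p+q\tau=0$.} By Lemma~\ref{lem2.2}, $S$ is a rational elliptic surface with global sections. Its elliptic fibration $\pi_S:S\to\text{CP}^1$, induced by the pencil of cubics through the nine points, has $C$ as one fiber. Almost every fiber is a smooth elliptic curve, so some fiber $\pi_S^{-1}([s:t])$ with $[s:t]\neq[1:0]$ is a compact curve of positive dimension contained in $S\setminus C$. A Stein manifold contains no such compact analytic subvariety, so $S\setminus C$ is not Stein. This is the argument at the end of Section 3. When points are infinitely near, I would check that the pencil still exists and still has a smooth member disjoint from $C$; this holds because generic members of a pencil are smooth and distinct members meet only at base points on $C$.

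\emph{Case of order $m\ge 2$.} Invoke Lemma~\ref{lem4.2}. Here $|-mK_S|$ gives an elliptic fibration with $mC$ as the multiple fiber. Again a general fiber is a smooth compact elliptic curve disjoint from $C$, so non-Steinness also follows directly, and the logarithmic transformation argument of Lemma~\ref{lem4.2} need not be repeated.

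\emph{Diophantine case.} Invoke Lemma~\ref{lem4.1}. Using $N_{C/S}\cong\mathcal{O}_{C}(3H-\sum p_i)$, the monodromies along $\alpha$ and $\beta$ are $e^{2\pi\sqrt{-1}p}$ and $e^{2\pi\sqrt{-1}q}$, so $N_{C/S}$ satisfies the Diophantine condition. By \cite[Theorem~1.7]{Koike and Uehara:2010}, $C$ then has pseudoflat neighborhoods whose boundaries are Levi-flat 3-tori.

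The main obstacle is this Diophantine case: converting the existence of Levi-flat hypersurfaces into non-Steinness. I would argue by contradiction. If $S\setminus C$ were Stein, it would carry a smooth strictly plurisubharmonic exhaustion function $\psi$. Restrict $\psi$ to a compact Levi-flat boundary $\partial V$ lying in $S\setminus C$ and take a point where $\psi|_{\partial V}$ attains its maximum. The leaf of the Levi foliation through that point is a complex curve, and $\psi$ restricted to it is strictly subharmonic and attains an interior maximum. This contradicts the maximum principle, so $S\setminus C$ is not Stein. Collecting the three cases proves Proposition~\ref{pro4.3}.
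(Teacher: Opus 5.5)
Your proof is correct. It uses the same three-case decomposition as the paper: the end-of-Section~3 argument for $p+q\tau=0$, Lemma~\ref{lem4.2} for the torsion case, and Lemma~\ref{lem4.1} for the Diophantine case. It departs from the paper at two points, both to your advantage.

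\textbf{Torsion case.} The paper goes through Kodaira's logarithmic transformation. It builds a rational elliptic surface $\widetilde S$ with a section, identifies $\widetilde S$ minus a fibre with $S\setminus\Phi_{|-mK_S|}^{-1}([1:0])$, and then falls back on the $p+q\tau=0$ case. You instead observe that once $\Phi_{|-mK_S|}$ is an elliptic fibration with $mC$ as a fibre, its other fibres are already compact curves in $S\setminus C$. The paper needs that fibration fact anyway, via Fujimoto. Your route therefore removes the construction of $\widetilde S$ entirely, and it is the cleaner argument.

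\textbf{Diophantine case.} The paper stops at the existence of pseudoflat neighbourhoods whose boundaries are Levi-flat with dense leaves, and then simply declares $S\setminus C$ non-Stein. Your maximum-principle argument supplies the step the paper omits: a strictly plurisubharmonic function, restricted to a compact Levi-flat hypersurface in $S\setminus C$, would attain a maximum at a point lying on a local holomorphic disc inside that hypersurface. Your argument also shows that one compact Levi-flat hypersurface in $S\setminus C$ suffices; the density of leaves cited from Koike--Uehara is not needed.

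\textbf{Preliminary reduction.} Independence of the choice of $f_\tau$ holds only among embeddings sending $0$ to an inflexion point. This is the paper's standing convention from Lemma~\ref{lem2.2}, and your parenthetical shows you are using it. For such embeddings, both are given by the complete linear system $|3\cdot[0]|$, which is why the isomorphism of Lemma~\ref{lem2.1} can be taken projective-linear. For two arbitrary embeddings, they also differ by a translation $x\mapsto x+a$ of $C_0(\tau)$, so $\sum f_\tau^{-1}(p_i)$ shifts by $9a$. The hypothesis would then no longer describe $N_{C/S}$.
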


\subsection{Some discussions on Marco Brunella's conjecture}

\noindent
In this section, we give more discussions on the conjecture for the non-Steinness of the quasi-projective varieties $S\setminus C$ from the blow-up of $\text{CP}^2$ at nine points proposed in \cite{Marco:2010} by Marco Brunella.

Firstly, we can construct a smooth Hermitian metric on the anticanonical line bundle of $S$ whose curvature form is semipositive if $\sum_{i=1}^9 f_\tau^{-1} (p_i)=p+q\tau$ mod $\langle 1, \tau \rangle$ with $p+q\tau=0$, of order $m$ or $(p,q)$ being a Diophantine number pair.

\begin{proposition}
\label{pro4.4}
If $\sum_{i=1}^9 f_\tau^{-1} (p_i)=p+q\tau$ mod $\langle 1, \tau \rangle$ with $p+q\tau=0$, of order $m$ or $(p,q)$ being a Diophantine number pair, then there exists a smooth Hermitian metric on $K_S^{-1}$ with semipositive curvature.
\end{proposition}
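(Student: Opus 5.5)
We treat the three cases separately. In each we build a smooth semipositive metric on $K_S^{-1}$ using the fact that $C$ is an anticanonical divisor of $S$. Indeed $C$ is the strict transform of a cubic through the nine blow-up points, so $\mathcal{O}_S(C)\cong K_S^{-1}$ and $N_{C/S}\cong K_S^{-1}|_C$.

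\emph{Case $p+q\tau=0$.} By Lemma~\ref{lem2.2}, $S$ is a rational elliptic surface and $\pi_S:S\rightarrow \text{CP}^1$ is the elliptic fibration induced by the pencil of cubics. Then $K_S^{-1}\cong \pi_S^*\mathcal{O}_{\text{CP}^1}(1)$, since $C$ is a fibre. Let $h_{FS}$ be the Fubini–Study metric on $\mathcal{O}_{\text{CP}^1}(1)$. The pulled-back metric $\pi_S^*h_{FS}$ is smooth, and its curvature $\pi_S^*\omega_{FS}$ is semipositive, being the pullback of a positive form under a holomorphic map.

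\emph{Case of order $m\geq 2$.} By \cite[Main Theorem~2.1-(2)]{Fujimoto:1990}, as used in Lemma~\ref{lem4.2}, the anti-pluricanonical map $\Phi_{|-mK_S|}:S\rightarrow\text{CP}^1$ is an elliptic fibration, and $K_S^{-m}\cong\Phi_{|-mK_S|}^*\mathcal{O}_{\text{CP}^1}(1)$. We take $h=(\Phi_{|-mK_S|}^*h_{FS})^{1/m}$ on $K_S^{-1}$. Concretely, in a local trivialization the weight is $\frac1m$ times the pulled-back Fubini–Study weight. This metric is smooth because it is an $m$-th root of a smooth positive metric on the $m$-th tensor power. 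Its curvature is $\frac1m\Phi_{|-mK_S|}^*\omega_{FS}\geq 0$.

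\emph{Diophantine case.} By the argument before Lemma~\ref{lem4.1}, $N_{C/S}$ is a flat unitary bundle satisfying the Diophantine condition. We invoke \cite[Theorem~1.7]{Koike and Uehara:2010} (Arnol'd/Ueda-type linearization): there is a neighbourhood $V$ of $C$ biholomorphic to a neighbourhood of the zero section of $N_{C/S}$. On $V$ the line bundle $\mathcal{O}_S(C)$ is then flat, i.e. given by unitary $U(1)$-valued transition functions. It therefore carries a flat metric $h_V$, and the defining section $s_C$ has $|s_C|_{h_V}$ equal to a pluriharmonic-type distance to $C$.

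Next we glue. On $V$ consider the plurisubharmonic function $\log|s_C|^2_{h_V}$; it is pluriharmonic on $V\setminus C$. On $S\setminus C$ the bundle $K_S^{-1}$ is trivialized by $s_C$, giving the singular metric $|s_C|^2\equiv1$. We set the global weight to be a regularized maximum
\[
\varphi=\max{}_\varepsilon\bigl(\log|s_C|^2_{h_V}+c,\ 0\bigr),
\]
with $c$ chosen so that near $C$ the first term dominates. Near $C$ we then have $\varphi$ equal to the flat weight, which is smooth; away from $V$ we have $\varphi=0$. The regularized maximum of plurisubharmonic functions is plurisubharmonic, so the resulting metric on $K_S^{-1}$ is smooth with semipositive curvature. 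This is the construction of Koike for Ueda type $\infty$ / Diophantine normal bundles.

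\emph{Main obstacle.} The hard step is the gluing in the Diophantine case. It requires the level sets $\{|s_C|_{h_V}=\text{const}\}$ of the linearized neighbourhood to be compact Levi-flat hypersurfaces. Only then is the first term strictly below $0$ near $\partial V$, so that the regularized maximum is well defined and smooth. The pseudoflat neighbourhood from \cite[Theorem~1.1]{Koike and Uehara:2010} supplies exactly this.
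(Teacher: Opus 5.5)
Your first two cases are the paper's argument. You use $K_S^{-1}\cong\pi_S^*\mathcal{O}_{\text{CP}^1}(1)$, respectively $K_S^{-m}\cong\Phi_{|-mK_S|}^*\mathcal{O}_{\text{CP}^1}(1)$, and pull back the Fubini--Study metric, taking an $m$-th root in the second case.

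The routes differ only in the Diophantine case, where the paper simply quotes Brunella's result and you rebuild it. Arnol'd's linearization gives a neighbourhood $V$ of $C$ with $\mathcal{O}_S(C)|_V\cong p^*N_{C/S}$ unitary flat, because the tautological section of $p^*N_{C/S}$ cuts out the zero section with multiplicity one. You then glue the flat metric near $C$ to the metric with $|s_C|\equiv\mathrm{const}$ on $S\setminus C$ by a regularized maximum. This is sound and makes the proof self-contained. It also shows that the only input is a neighbourhood of $C$ on which $\mathcal{O}_S(C)$ is unitary flat, which is the mechanism behind Brunella's link between semipositivity of $K_S^{-1}$ and pseudoflat neighbourhoods. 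The paper's citation is shorter but does not show this.

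Two things need fixing.

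\textbf{The displayed weight has the wrong sign.} Since $\log|s_C|^2_{h_V}\to-\infty$ along $C$, in $\max_\varepsilon(\log|s_C|^2_{h_V}+c,0)$ the constant term wins near $C$. You would recover the singular metric there, and the first term is not even defined off $V$. Take the reference metric with $|s_C|\equiv 1$ on $S\setminus C$ and set $\varphi=\max_\varepsilon(-\log|s_C|^2_{h_V}+c,\,0)$. Choose $c$ below $\inf\log|s_C|^2_{h_V}$ on a collar of $\partial V$ by more than the regularization parameter. Then $h=e^{-c}h_V$ near $C$, which is smooth and flat across $C$. Near $\partial V$ and outside $V$, $h$ is the constant-norm metric. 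Since $-\log|s_C|^2_{h_V}$ is pluriharmonic on $V\setminus C$, $\varphi$ is plurisubharmonic there, so the curvature is semipositive. Your own phrases ``near $C$ the first term dominates'' and ``away from $V$ we have $\varphi=0$'' only make sense with this sign.

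\textbf{The ``main obstacle'' you name is not one.} The gluing needs only $\{|s_C|_{h_V}<r\}\Subset V$ for small $r>0$. This is automatic, because the linearized $V$ contains a closed disc bundle $\{|\xi|\le r_0\}$ over the compact curve $C$. Levi-flatness of the level sets follows from pluriharmonicity; it is not a hypothesis the gluing requires. Also, the gluing construction is Brunella's rather than Koike's, and the linearization theorem is Arnol'd's.
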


\begin{proof}
Firstly, it is to prove that if $\sum_{i=1}^9 f_\tau^{-1} (p_i)=p+q\tau$ mod $\langle 1, \tau \rangle$ with $p+q\tau=0$, then there exists a smooth Hermitian metric on $K_S^{-1}$ with semipositive curvature.

Here, let $f_S:S \rightarrow \text{CP}^2$ be the blow-up map. Let $E_i$ be the line bundle on $S$ corresponding to the exceptional divisors $f_S^{-1} (p_i)$ for $i \in \{1,\cdots,9\}$ respectively. Taking use of Lemma in \cite[Chapter~1-4]{Phillip and Joseph:1994} and the formula for the proper transform in \cite[Chapter~4-1]{Phillip and Joseph:1994}, $-K_s \cong [C] \cong f_S^* (3H) \otimes(- E_1)\otimes \cdots \otimes(-E_9)$.

Since $f^* \mathcal{O}_{\text{CP}^1 } (1) \cong [C], -K_s \cong f^* \mathcal{O}_{\text{CP}^1 } (1)$. Here, the Fubini-Study metric h on the hyperplane line bundle $\mathcal{O}_{\text{CP}^1 } (1)$ is a Hermitian metric with positive curvature. So $f^* h$ is a smooth Hermitian metric on $-K_s$ with a semipositive (1,1)-form as curvature.

Secondly, it is to prove that if  $\sum_{i=1}^9 f_\tau^{-1} (p_i)=p+q\tau$ mod $\langle 1, \tau \rangle$ of order $m$, then there exists a smooth Hermitian metric on $K_S^{-1}$ with semipositive curvature. Marco Brunella mentioned the construction in \cite{Marco:2010}.

From the proof of Lemma~\ref{lem4.2}, we can construct a rational elliptic surface $\widetilde{S}$ with global sections through the same process as the proof of Lemma~\ref{lem4.2}. The elliptic fibration $\Phi_{|-mK_s|}:S \rightarrow \text{CP}^1$ is isomorphic to the elliptic fibration on $\widetilde{S}$ \cite[Proposition~1.1-(2)]{Fujimoto:1990}. Then $(-K_S)^{\otimes m} \cong [\Phi_{|-mK_s|}^{-1} ([1:0])] \cong \Phi_{|-mK_s |}^* \mathcal{O}_{\text{CP}^1 } (1)$. Taking use of the same method as above, $\Phi_{|-mK_s |}^* h$ is a smooth Hermitian metric on $(-K_S)^{\otimes m}$ with a semipositive (1,1)-form as curvature. Then $(\Phi_{|-mK_s|}^* h)^{1/m}$ is a smooth Hermitian metric on $-K_S$ with a semipositive (1,1)-form as curvature.

Finally, if the nine blow-up points satisfy the condition that $\sum_{i=1}^9 f_\tau^{-1} (p_i)=p+q\tau$  mod $\langle 1, \tau \rangle$ with $(p,q)$ being a Diophantine number pair, Marco Brunella has proved in \cite{Marco:2010} that there exists a smooth Hermitian metric on $K_S^{-1}$ with semipositive curvature.

In conclusion, if $\sum_{i=1}^9 f_\tau^{-1} (p_i)=p+q\tau$ mod $\langle 1, \tau \rangle$ with $p+q\tau=0$, of order $m$ or $(p,q)$ being a Diophantine number pair, there exists a smooth Hermitian metric on $K_S^{-1}$ with semipositive curvature. So Proposition~\ref{pro4.4} is proved. 

\end{proof}

Taking use of the result from Yau's paper \cite{Yau:1978} and Proposition~\ref{pro4.4}, if $\sum_{i=1}^9 f_\tau^{-1} (p_i)=p+q\tau$ mod $\langle 1, \tau \rangle$ with $p+q\tau=0$, of order $m$ or $(p,q)$ being a Diophantine number pair, then there exists a smooth Kähler metric on $S$ with semipositive Ricci curvature.

Moreover, taking use of \cite[Theorem~1]{Marco:2010}, under a certain condition for the normal bundle $N_{C/S}$ (For example, if the nine blow-up points satisfy a specific condition proposed in \cite[4.14]{Ogus:1976}) and assume that $S\setminus C$ does not contain any compact complex curve, the existence of Kähler metric on S with semipositive Ricci curvature is equivalent to the existence of the pseudoflat neighborhood of C in S which indicates the non-Steinness of $S\setminus C$.

Secondly, we construct a deformation family of quasi-projective varieties to illustrate the remaining work for the conjecture proposed by Marco Brunella \cite{Marco:2010}.

Choosing $i \in \{1,\cdots,9\}$, let $V_i$ be a sufficiently small disc neighborhood of $p_i$ and $(p_1,\cdots,p_9 ) \in \mathbb{C}^9$.

For $\tau \in Y$, let $f_{\tau,pro}$ be the natural projection map from $\mathbb{C}$ to $\mathbb{C}/\langle 1,\tau \rangle$. Let $\pi_{\text{CP}^2 \times V_i }:\text{CP}^2 \times V_i \rightarrow V_i$ also be the natural projection map. Now for any $\widetilde{p}_i \in V_i$, we blow up nine points in the set
\[\{f_\tau (f_{\tau,pro} (p_1)), \cdots,f_\tau (f_{\tau,pro} (\widetilde{p}_i)),\cdots, f_\tau (f_{\tau,pro} (p_9))\} \times \{\widetilde{p}_i\} \subset f_\tau (C_0 (\tau))\times \{\widetilde{p}_i\}\]
on the fiber $\pi_{\text{CP}^2 \times V_i}^{-1} (\widetilde{p}_i) \cong \text{CP}^2$.

Then we can get a complex manifold $\widetilde{\mathcal{F}}_1$ and a projection map $\pi_{\widetilde{\mathcal{F}}_1}:\widetilde{\mathcal{F}}_1 \rightarrow V_i$ induced by $\pi_{\text{CP}^2 \times V_i}$. Let $S_3=\left \{(x,y) \in f_\tau (C_0 (\tau)) \times V_i|x \in f_\tau (C_0 (\tau))\right \}$ and $\widetilde{\mathcal{S}}_3$ be the strict transform of $S_3$. 

Through the same proof as Theorem~\ref{thm2.4}, we can get the fact that $(\widetilde{\mathcal{F}}_1,V_i,\pi_{\widetilde{\mathcal{F}}_1})$ is a complex analytic family of $\text{CP}^2 \text{$\#$9} \overline{\text{CP}}^2$ and $(\widetilde{\mathcal{S}}_3,V_i,\pi_{\widetilde{\mathcal{F}}_1 } |_{\widetilde{\mathcal{S}}_3})$ is a complex analytic family of elliptic curves.

Let $S_{origin}$ be the blow-up of $\text{CP}^2$ at the nine points in the set 
\[\{f_{\tau } (f_{\tau,pro} (p_1 )), \cdots,f_{\tau } (f_{\tau,pro} (p_9 ))\} \subset f_{\tau } (C_0 (\tau ))\] 
and $C_{origin}$ be the strict transform of $f_{\tau } (C_0 (\tau ))$. Let $x_{origin}=p_i \in V_i$ and $j_{origin}:S_{origin}\setminus C_{origin}\rightarrow \pi_{\widetilde{\mathcal{F}}_1}^{-1} (x_{origin} )$ be defined as $j_{origin}(x)=(x,p_i)$ for $x \in S_{origin}\setminus C_{origin}$.

Through the same proof as Theorem~\ref{thm2.6} , $(\widetilde{\mathcal{F}}_1,\widetilde{\mathcal{S}}_3,V_i,\pi_{\widetilde{\mathcal{F}}_1},x_{origin},j_{origin})$ is a compactifiable deformation family of $S_{origin}\setminus C_{origin}$ for a suitable $V_i$ chosen.

Taking use of Proposition~\ref{pro4.3}, almost all the fibers of the compactifiable deformation family $(\widetilde{\mathcal{F}}_1,\widetilde{\mathcal{S}}_3,V_i,\pi_{\widetilde{\mathcal{F}}_1},x_{origin},j_{origin})$ are not Stein. Taking use of Proposition~\ref{pro4.4}, the anticanonical line bundle is semipositive for almost all the fibers of the complex analytic family $(\widetilde{\mathcal{F}}_1,V_i,\pi_{\widetilde{\mathcal{F}}_1})$.

Now besides the non-Steinness conjecture by Marco Brunella, it is also natural to ask whether the anticanonical line bundle is semipositive for all the fibers of the complex analytic family $(\widetilde{\mathcal{F}}_1,V_i,\pi_{\widetilde{\mathcal{F}}_1})$.

There is a conjecture by Takayuki Koike \cite[Conjecture~2.1]{Takayuki:2023} which has a close relationship with this question. However, the answer also depends on the Ueda type defined in \cite{Ueda:1982} for the strict transform of the elliptic curve embedded in each fiber.

\subsection{A conjecture for deformation families with Stein fibers}

\noindent
In this section, we give another kind of deformation family in the sense of \cite{Gasparim:2021} whose fibers were conjectured to be Stein in \cite{Höring:2024}.

Firstly, we should introduce the definition for the canonical extension $Z_M$ of a Kähler manifold M \cite{Greb and wong:2020}. Let $\omega$ be a Kähler form on M which defines a class $a_{\omega} \in H^1 (M,T^* M)$ with $\alpha_{\omega} \in \mathcal{A}^{0,1}(T^* M)$ being a Dolbeault representative. We have the following extension of vector bundles
\[  0 \rightarrow T^* M \rightarrow W \rightarrow \mathcal{O}_M \rightarrow 0   \]
corresponding to $a_{\omega}$. $W=T^* M \oplus \mathcal{O}_M$ has a holomorphic structure as 
\[ \overline{\partial}_W:=\begin{bmatrix} \overline{\partial}_{T^* M} & \alpha_{\omega} \\  & \overline{\partial}_{\mathcal{O}_M} \end{bmatrix}.\]

Now define $W_M=P(W)\setminus P(T^* M)$. Let $(\mathcal{M},\pi_\mathcal{M},B)$ be a nontrivial complex analytic family with B being a sufficiently small polydisc centered at 0 and $\pi_\mathcal{M}^{-1} (0)$ is holomorphically isomorphic to M.

Then we have the following proposition.

\begin{proposition}
\label{pro4.5}
If there exists a polydisc $B_1 \Subset B$ such that $\mathcal{M}_{B_1}=\pi_\mathcal{M}^{-1}(B_1)$ is Kähler, then we can construct a deformation family of the canonical extension $W_M$ of the compact complex manifold M in the sense of \cite{Gasparim:2021} with the base manifold being $B_1$.
\end{proposition}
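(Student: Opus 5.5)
The plan is to run the canonical-extension construction fibrewise and in families over $B_1$, then check the axioms of a compactifiable deformation in the sense of \cite{Gasparim:2021}, following the template of Theorem~\ref{thm2.6}.

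\textbf{Step 1: a relative Kähler class.} Since $\mathcal{M}_{B_1}$ is Kähler, fix a Kähler form $\Omega$ on $\mathcal{M}_{B_1}$ (shrinking $B_1$ slightly inside $B$ if needed). For each $b \in B_1$ its restriction $\omega_b=\Omega|_{M_b}$, with $M_b=\pi_{\mathcal{M}}^{-1}(b)$, is a Kähler form on $M_b$. Consider the relative cotangent bundle $T^*_{\mathcal{M}_{B_1}/B_1}$, which is holomorphic because $\pi_{\mathcal{M}}$ is a submersion. The form $\Omega$ projects to a $\overline{\partial}$-closed section $\alpha \in \mathcal{A}^{0,1}(T^*_{\mathcal{M}_{B_1}/B_1})$, obtained by contracting the $(1,1)$-form into the relative directions. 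Its restriction to each $M_b$ represents $a_{\omega_b}\in H^1(M_b,T^*M_b)$.

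\textbf{Step 2: the relative extension.} Use $\alpha$ to define a holomorphic structure
\[\overline{\partial}_{\mathcal{W}}=\begin{bmatrix} \overline{\partial}_{T^*_{\mathcal{M}/B}} & \alpha \\ & \overline{\partial}_{\mathcal{O}} \end{bmatrix}\]
on $\mathcal{W}=T^*_{\mathcal{M}_{B_1}/B_1}\oplus\mathcal{O}$. Integrability, $\overline{\partial}_{\mathcal{W}}^2=0$, reduces to $\overline{\partial}\alpha=0$. Set
\[\mathcal{X}=P(\mathcal{W}), \qquad \mathcal{D}=P(T^*_{\mathcal{M}_{B_1}/B_1}), \qquad \pi_{\mathcal{X}}=\pi_{\mathcal{M}}\circ p,\]
where $p:\mathcal{X}\to\mathcal{M}_{B_1}$ is the bundle projection. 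Then $\pi_{\mathcal{X}}$ is a proper holomorphic submersion, since it is a projective bundle over a proper submersion; strictly one uses a closed polydisc or shrinks to get properness over $B_1$. The subset $\mathcal{D}$ is a smooth closed hypersurface, transverse to the fibres. For each $b$, the fibre $\pi_{\mathcal{X}}^{-1}(b)\setminus\mathcal{D}$ is exactly $W_{M_b}$ built from $\omega_b$, and for $b=0$ it is $W_M$, possibly after rescaling $\Omega$ so that $\omega_0$ lies in the class of $\omega$. The embedding $j_0:W_M\to\pi_{\mathcal{X}}^{-1}(0)\setminus\mathcal{D}$ comes from the isomorphism $\pi_{\mathcal{M}}^{-1}(0)\cong M$.

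\textbf{Step 3: the axioms.} Verify them as in Theorem~\ref{thm2.6}:
\begin{itemize}
\item $W_M$ is compactifiable, since $P(W)$ is compact and $P(T^*M)$ is analytic.
\item $\mathcal{D}$ is a closed analytic subset of $\mathcal{X}$.
\item $\pi_{\mathcal{X}}$ is proper, and $\pi_{\mathcal{X}}|_{\mathcal{X}\setminus\mathcal{D}}$ is flat because it is a submersion.
\item $j_0$ is an open embedding onto $\pi_{\mathcal{X}}^{-1}(0)\setminus\mathcal{D}$.
\end{itemize}
This yields the deformation family $(\mathcal{X},\mathcal{D},B_1,\pi_{\mathcal{X}},0,j_0)$.

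\textbf{Main obstacle.} The hard step is Step 1: producing a $\overline{\partial}$-closed relative $(0,1)$-form that represents the Kähler class on every fibre at once, so that the extensions glue holomorphically. The Kähler hypothesis on $\mathcal{M}_{B_1}$ is used exactly here. Without it, one would need fibrewise Kähler forms varying holomorphically in cohomology. Kodaira–Spencer stability of Kähler structures gives these fibrewise, but not a global total-space form, and this is where the alternative hypothesis of \cite[Theorem~1.2]{Chen:2026} would enter.

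My first attempt is to take $\alpha$ as the image of $[\Omega]$ under
\[H^1(\mathcal{M}_{B_1},T^*\mathcal{M}_{B_1})\to H^1(\mathcal{M}_{B_1},T^*_{\mathcal{M}/B}).\]
I would then check compatibility with restriction to fibres by naturality of the conormal sequence.
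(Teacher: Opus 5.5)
Your proposal follows the same route as the paper: a Kähler form on $\mathcal{M}_{B_1}$ gives a Dolbeault representative of a class in $H^1(\mathcal{M}_{B_1},T^*(\mathcal{M}_{B_1}/B_1))$, this class defines the relative extension bundle, and $P(W_{B_1})\setminus P(T^*(\mathcal{M}_{B_1}/B_1))\to B_1$ restricts on each fibre to the canonical extension of that fibre; the only difference is that you certify the family through the compactifiable-deformation axioms for $(P(\mathcal{W}),\mathcal{D},B_1,\pi_{\mathcal{X}},0,j_0)$, whereas the paper argues $C^\infty$ local triviality of the open family directly. One small correction: rescaling $\Omega$ only moves $[\Omega|_{M_0}]$ along a ray, and the canonical extension does not change under scaling anyway, so in general you cannot match an arbitrary prescribed $[\omega]$; you should simply define $W_M$ using $\omega=\Omega|_{M_0}$, and the paper makes the same tacit assumption when it asserts that $\omega_{B_1}$ can be chosen to restrict to the given $\omega$.
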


\begin{proof}
In this proof, the deformation families are all in the sense of \cite{Gasparim:2021}. Let $m=\text{dim}(\mathcal{M}_{B_1})$ and $n=\text{dim}(B_1)$. 

Let $\pi_{B_1}=\pi_\mathcal{M} |_{\pi_\mathcal{M}^{-1}(B_1)}$. Taking use of the inverse function theorem, we can get an atlas $\{(X_i,z_i)\}_{i \in V}$ on $\mathcal{M}_{B_1}$ with $z_i(p)=(z_i^1 (p), \cdots ,z_i^{m-n} (p),t_{m-n+1}, \cdots ,t_m)$ and $(t_{m-n+1},\cdots,t_m )$ $=\pi_{B_1} (p)$ for $p \in X_i$ and $i \in V$. 

Now since $\mathcal{M}_{B_1}=\pi_\mathcal{M}^{-1}(B_1)$ is Kähler, we can construct a Kähler form $\omega_{B_1}$ on $\mathcal{M}_{B_1}$ such that the restriction of $\omega_{B_1}$ on each fiber is also a Kähler form and $\omega_{B_1}|_{\pi_{B_1}^{-1} (0)}$ is the pullback of the Kähler form $\omega$ on M. Here, $\omega_{B_1}$ defines a class $a_{B_1} \in H^1 (\mathcal{M}_{B_1},T^* (\mathcal{M}_{B_1}/B_1))$ with $\alpha_{B_1} \in \mathcal{A}^{0,1}(\mathcal{M}_{B_1},T^* (\mathcal{M}_{B_1}/B_1))$ being a Dolbeault representative. Then we have the following extension of vector bundles
\[0 \rightarrow T^* (\mathcal{M}_{B_1}/B_1) \rightarrow W_{B_1} \rightarrow \mathcal{O}_{\mathcal{M}_{B_1}} \rightarrow 0\]
corresponding to $a_{B_1}$. Now define
\[W_{\mathcal{M}_{B_1}}=P(W_{B_1})\setminus P(T^* (\mathcal{M}_{B_1}/B_1)).\]
In addition, $W_{B_1}=T^* (\mathcal{M}_{B_1}/B_1) \oplus \mathcal{O}_{\mathcal{M}_{B_1}}$ \cite[1.3]{Greb and wong:2020} with a holomorphic structure as 
\[ \overline{\partial}_{W_{B_1}}:=\begin{bmatrix} \overline{\partial}_{T^* (\mathcal{M}_{B_1}/B_1)} & \alpha_{B_1} \\  & \overline{\partial}_{\mathcal{O}_{\mathcal{M}_{B_1}}} \end{bmatrix}.\]

Let $\pi_{co}:T^* (\mathcal{M}_{B_1}/B_1) \rightarrow \mathcal{M}_{B_1}$ be the natural projection. So $\pi_{B_1}\circ \pi_{co}$ is a holomorphically surjective submersion.  

Here, $(\pi_{B_1} \circ \pi_{co} )^{-1} (0)=T^* \pi_{B_1}^{-1} (0)$. It is easy to prove that $T^* (\mathcal{M}_{B_1}/B_1)$ is locally trivial in the $C^\infty$ category through the local trivializations of $T^* (\mathcal{M}_{B_1}/B_1)$ and the $C^\infty$ local triviality of $\pi_{B_1}$ which can be derived from \cite[Theorem~2.5]{Kodaira:2005}. So $\pi_{B_1}\circ \pi_{co}:T^* (\mathcal{M}_{B_1}/B_1) \rightarrow B_1$ is a deformation family of $T^* \pi_{B_1}^{-1} (0)$ with $(\pi_{B_1} \circ \pi_{co})^{-1} (t)=T^* \pi^{-1}_{B_1 } (t)$.

Now $\mathcal{O}_{\mathcal{M}_{B_1}} |_{\pi_{B_1}^{-1} (t)}=\mathcal{O}_{\pi_{B_1 }^{-1} (t)}$ for $t \in B_1$. From above, $\omega_{B_1} |_{\pi_{B_1}^{-1} (t)}$ is a Kähler form which defines a class $a_t=a_{B_1}|_{\pi_{B_1}^{-1}(t)} \in H^1 (\pi_{{B_1}}^{-1} (t),T^* \pi_{B_1}^{-1} (t))$ with $\alpha_t=\alpha_{B_1}|_{\pi_{B_1}^{-1}(t)} \in \mathcal{A}^{0,1}(T^* (\pi_{B_1}^{-1} (t)))$ being a Dolbeault representative.

Let $\pi_{W_1} :W_{B_1}\rightarrow \mathcal{M}_{B_1}$ be the natural projection induced by $\pi_{co}$. Then $\pi_{B_1} \circ \pi_{W_1}:W_{B_1} \rightarrow B_1$ is a deformation family of $T^* \pi_{B_1}^{-1} (0) \oplus \mathcal{O}_{\pi_{B_1}^{-1} (0)}$ with $(\pi_{B_1} \circ \pi_{W_1} )^{-1} (t)=T^* \pi^{-1}_{B_1}(t) \oplus \mathcal{O}_{\pi_{B_1}^{-1}(t) }$.
Here, the holomorphic structure on $T^* \pi^{-1}_{B_1}(t) \oplus \mathcal{O}_{\pi_{B_1}^{-1}(t) }$ is similar to the holomorphic structure of $W$ above induced by $\alpha_t$, $\overline{\partial}_{T^* \pi_{B_1}^{-1} (t)}$ and $\overline{\partial}_{\mathcal{O}_{\pi_{B_1}^{-1} (t)}}$. Then we have the following extension of vector bundles
\[0 \rightarrow T^* \pi_{B_1}^{-1} (t) \rightarrow T^* \pi_{B_1}^{-1} (t) \oplus \mathcal{O}_{\pi_{B_1}^{-1} (t)} \rightarrow \mathcal{O}_{\pi_{B_1}^{-1} (t)} \rightarrow 0 \]
corresponding to $ a_t$.

Now let $\pi_P:P(W_{B_1})\setminus P(T^* (\mathcal{M}_{B_1}/B_1)) \rightarrow \mathcal{M}_{B_1}$ be the natural projection induced by $\pi_{W_1}$ and $\pi_{co}$.

Then $\pi_{B_1} \circ \pi_P:P(W_{B_1})\setminus P(T^* (\mathcal{M}_{B_1}/B_1)) \rightarrow B_1$ is a deformation family of $P(T^* \pi_{B_1}^{-1} (0) \oplus \mathcal{O}_{\pi_{B_1}^{-1} (0)})\setminus P(T^* \pi_{B_1}^{-1}(0))$ with 
\[(\pi_{B_1} \circ \pi_P )^{-1} (t)=P(T^* \pi_{B_1}^{-1} (t) \oplus \mathcal{O}_{\pi_{B_1}^{-1} (t) })\setminus P(T^* \pi_{B_1}^{-1} (t)). \]

$(\pi_{B_1} \circ \pi_P)^{-1} (0)=P(T^* \pi_{B_1}^{-1} (0) \oplus \mathcal{O}_{\pi_{B_1}^{-1} (0) })\setminus P(T^* \pi_{B_1}^{-1} (0))$ is holomorphically isomorphic to $W_M=P(W)\setminus P(T^*M)$.

Here, $P(T^* \pi_{B_1}^{-1}(t) \oplus \mathcal{O}_{\pi_{B_1}^{-1} (t) })\setminus P(T^* \pi_{B_1}^{-1} (t))$ is the canonical extension of $\pi_{B_1}^{-1} (t)$ for $t \in B_1$.

Therefore, $\pi_{B_1} \circ \pi_P :P(W_{B_1})\setminus P(T^* (\mathcal{M}_{B_1}/B_1)) \rightarrow B_1$ is a deformation family of the canonical extension $W_M$ of the compact complex manifold M in the sense of Edoardo Ballico, Elizabeth Gasparim and Francisco Rubilar \cite{Gasparim:2021} with the base manifold being $B_1$. So Proposition~\ref{pro4.5} is proved. 
\end{proof}

In fact, the condition above can be replaced by the other equivalent condition proposed by Jian Chen in \cite[Theorem~1.2]{Chen:2026}.

Here, Andreas Höring and Thomas Peternell attempted to conjecture that the fibers for the deformation family of the canonical extension $W_M$ of M constructed above are all Stein if and only if the tangent bundle for each fiber of the complex analytic family $(\mathcal{M}_{B_1},\pi_{B_1},B_1)$ is numerically effective in \cite[Conjecture~1.1]{Höring:2024}. A recent result corresponding to the conjecture is \cite[Theorem~0.2.]{Müller:2025}.

Here, some compact complex manifolds with numerically effective tangent bundles such as Moishezon manifolds, surfaces and Kähler 3-folds with nef tangent bundles were studied in \cite{Schneider:1994} by Jean-Pierre Demailly, Thomas Peternell and Michael Schneider.

One typical example of the deformation family is the deformation family of the canonical extension of a torus. It is obvious that $S_1 \subset \text{CP}^2 \times U$ is Kähler. Similar to the proof of Proposition 4.5 above, we can construct a deformation family of the canonical extension of a torus in the sense of \cite{Gasparim:2021}.

Let $\omega_{s_1}$ be a Kähler form on $S_1$ which defines a class $a_{s_1} \in H^1 (S_1,T^* (S_1/U))$ with $\alpha_{s_1} \in \mathcal{A}^{0,1}(S_1,T^* (S_1/U))$ being a Dolbeault representative. Let $\pi_{s_1}=\pi|_{S_1}$. Then $\omega_{s_1} |_{\pi_{s_1}^{-1} (t)}$ is a Kähler form on $\pi_{s_1}^{-1} (t)$ which defines a class $a_{s_1}|_{\pi_{s_1}^{-1} (t)} \in H^1 (\pi_{s_1}^{-1} (t),T^* \pi_{s_1}^{-1} (t))$ with $\alpha_{s_1}|_{\pi_{s_1}^{-1} (t)} \in \mathcal{A}^{0,1}(T^* (\pi_{s_1}^{-1} (t)))$ being a Dolbeault representative.
 
Now we have an extension of vector bundles
\[0 \rightarrow T^* (S_1/U) \rightarrow W_U \rightarrow \mathcal{O}_{S_1} \rightarrow 0\]
corresponding to $a_{s_1}$. Here, $W_U=T^* (S_1/U) \oplus \mathcal{O}_{S_1}$ with a holomorphic structure as 
\[ \overline{\partial}_{W_U}:=\begin{bmatrix} \overline{\partial}_{T^* (S_1/U)} & \alpha_{s_1} \\  & \overline{\partial}_{\mathcal{O}_{S_1}} \end{bmatrix}.\]
Let
\[W_{s_1}=P(W_U)\setminus P(T^*(S_1/U) ).\]
\noindent
Let $\pi_U:T^* (S_1/U) \rightarrow S_1$ be the natural projection. Here, 
\[(\pi_{s_1} \circ \pi_U )^{-1} (\tau_0 )=T^* \pi_{s_1}^{-1} (\tau_0).\]
\noindent
Let $\pi_W:W_U\rightarrow S_1$ be the natural projection induced by $\pi_U$. Let $\pi_{W_{s_1}}:P(W_U)\setminus P(T^* (S_1/U)) \rightarrow S_1$ be the natural projection induced by $\pi_W$ and $\pi_U$. Similar to the proof above, we can confirm that 
\[\pi_{s_1}\circ \pi_{W_{s_1}}:P(W_U )\setminus P(T^* (S_1/U) ) \rightarrow U\]
is a deformation family of the canonical extension of a torus. Taking use of \cite[Proposition~2.13]{Greb and wong:2020}, 
all fibers for the deformation family of the canonical extension of a torus constructed above are Stein.

\affiliationone{
Fan Xu \\
\vspace{0.15cm}
School of Mathematics, \\
\vspace{0.15cm}
Sun Yat-sen University.\\
\vspace{0.15cm}
Guangzhou, PR China,\\
\vspace{0.15cm}
510275\\
\vspace{0.15cm}
\email{xufan23@mail.sysu.edu.cn}}



\end{document}